\documentclass{amsart}
\usepackage{amsmath,amsthm,amsfonts,amssymb,mathtools,mathrsfs,url}
  \usepackage{paralist}
  \usepackage{graphics}
    \usepackage{epsfig}
\usepackage{graphicx}
\usepackage{hyperref}
\usepackage{graphicx}
\usepackage{amsmath}
\usepackage{amssymb}
\usepackage[utf8]{inputenc}
\usepackage{subcaption}
\usepackage{enumitem}
\usepackage{url}
\usepackage{xcolor}
\usepackage[toc,page]{appendix}
\usepackage{stmaryrd}
\usepackage{mathrsfs}
\usepackage{mathtools}
\usepackage[abbrev]{amsrefs}
\usepackage{tikz}
\usetikzlibrary{patterns.meta}
\usepackage{pgfplots}
\usepackage{lmodern}
\usepackage{cancel}

\pgfplotsset{every axis/.append style={
                    axis x line=middle,    
                    axis y line=middle,    
                    axis line style={->}, 
                    xlabel={$x$},          
                    ylabel={$v$},          
            }}

\numberwithin{equation}{section}

\hypersetup{urlcolor=blue, citecolor=green}

\mathtoolsset{showonlyrefs=true}

\calclayout

\newcommand{\R}{\mathbb R} 
\newcommand{\N}{\mathbb N}

\newcommand{\E}{\mathbb E}

\newcommand{\1}{\mathbf 1}
\newcommand{\D}{\mathrm{d}}

\newcommand{\hop}{\vskip.3cm\noindent} 
\newcommand{\hip}{\vskip.1cm\noindent}

\newtheorem{thm}{Theorem}[section]   
\newtheorem{cor}[thm]{Corollary}
\newtheorem{prop}[thm]{Proposition}
\newtheorem{lem}[thm]{Lemma}
\newtheorem{rema}[thm]{Remark}

\DeclareRobustCommand{\SkipTocEntry}[5]{}

\title[]{A spectral approach to the narrow escape problem in
  two-dimensional domains}
\author{Louis Carillo\textsuperscript{1,2,3}, Tony Lelièvre\textsuperscript{1,2}, Thomas Normand\textsuperscript{2,1}, and Urbain Vaes\textsuperscript{2,1}}
\thanks{\textsuperscript{1}CERMICS, CNRS, ENPC, Institut Polytechnique de Paris, Marne-la-Vallée, France. \textsuperscript{2}MATHERIALS project-team, Inria Paris, France. \textsuperscript{3}E-mail address: \href{mailto:louis.carillo@enpc.fr}{louis.carillo@enpc.fr}}
\date{}

\begin{document}

\begin{abstract}
  We study the law of the exit time and exit point of a Brownian
  motion in a two-dimensional domain with reflecting boundary
  conditions, except on small disjoint exit windows through which the
  stochastic process can escape the domain. In the limit of infinitely small exit
  windows, it is natural to assume that the process starts from the
  quasi-stationary distribution. In this setting, we obtain a precise description of
  the exit event.
\end{abstract}

\maketitle

\tableofcontents



\section{Introduction}

The {\em narrow escape problem} consists in characterizing the exit
event of a stochastic process from a domain in the asymptotic regime
where the exit regions are infinitely small. This problem has been
extensively considered in the physics literature, see for
example~\cite{bib1,bib2,grigoriev2002kinetics,schuss2007narrow} for a
few representative works. This problem naturally occurs in biology to
model the exit of molecular species from a cell through narrow channels
inserted in the cell membrane. It can also be seen as a prototypical
example of a so-called entropic metastability (as opposed to energetic
metastability) where a diffusion process takes a lot of time to leave
a domain not because of high energy barriers but because of geometric
constraints which make the exit doors very small. In this work, we are interested in
precisely characterizing the law of the exit time and exit point from
a two-dimensional domain, starting from the quasi-stationary
distribution. In particular, the results presented here can be seen as
a counterpart to our results in~\cite{di2016jump,GesGiaLeliPeutrec,LelievrePeutrecNectoux} for energetic
metastability, with the objective to identify the equivalent of
the celebrated Eyring-Kramers laws for entropic barriers.

The geometric setting is as follows. Let us consider a $\mathcal C^\infty$ two-dimensional domain $\Omega
\subset \R^2$,
and $N$ points on the boundary of $\Omega$: $x^{(1)},\dots , x^{(N)}
\in \partial \Omega$. Let us also introduce the parameters encoding the
sizes of the exit windows
$\varepsilon=(\varepsilon_1, \dots , \varepsilon_N) \in \R_+^*$. The
exit region is defined by 
\begin{equation}\label{eq:GammaD}
  \Gamma_{\mathcal D}^\varepsilon=\bigcup_{k=1}^N \Gamma_k^\varepsilon
\text{ where } \Gamma_k^\varepsilon=\partial \Omega \cap B(x^{(k)},
\varepsilon_k)
\end{equation}
where $B(x^{(k)},
\varepsilon_k)$ denotes the open two-dimensional ball centered at
$x^{(k)}$ with radius $\varepsilon_k$.
The subscript ${\mathcal D}$ refers to the fact that
considering exits through $\Gamma_{\mathcal D}$ will amount to imposing
Dirichlet boundary conditions on
$\Gamma_{\mathcal D}^\varepsilon$. Let us also introduce
\begin{equation}\label{eq:GammaN}
\Gamma_{\mathcal N}^\varepsilon = \partial \Omega \setminus
\overline{\Gamma_{\mathcal D}^\varepsilon}
\end{equation}
the reflecting part of the boundary. The subscript ${\mathcal N}$ refers to the
fact that reflecting boundary conditions will translate into Neumann
boundary conditions.

\subsection{Motivation: the probabilistic framework}

The objective of this section is to explain the probabilistic
context. It can be omitted by the reader only interested in the mathematical results.

Let us consider a Brownian motion in $\Omega$ with reflecting
boundary conditions on $\partial \Omega$:
$$dX_t=\sqrt{2} dB_t - 1_{\partial \Omega}(X_t) n_{X_t} dL_t$$
where $X_t \in \Omega$, $(B_t)_{t \ge 0}$ is a two-dimensional
Brownian motion, for any $x \in \partial \Omega$, $n_x$ is the
outward unit normal vector to $\Omega$ and $L_t$ is the local time on
$\partial \Omega$ enforcing the reflecting boundary conditions on
$\partial \Omega$, see for
example~\cite{Freidlin,lions1984stochastic,stroock1971diffusion,watanabe1971stochastic}. Let
us introduce the first time $X_t$ reaches $\Gamma_{\mathcal D}^\varepsilon$:
$$\tau := \inf\{t \ge 0, X_t \in \Gamma_{\mathcal D}^\varepsilon\}.$$
We are interested in precisely identifying the law of the pair of
random variables $(\tau,X_\tau)$ in the regime $|\varepsilon| \to 0$,
where $\varepsilon=(\varepsilon_1, \ldots, \varepsilon_N)$.

Since the Lebesgue measure $|\Gamma_{\mathcal D}^\varepsilon|$ of the exit region
in $\partial \Omega$ goes to zero when $|\varepsilon| \to 0$, it
is natural to assume that $X_t$ reached a local equilibrium within
$\Omega$ before exiting~\cite{di2016jump}. This is formalized by the notion of
quasi-stationary distribution. Indeed, one can show that the law of
$X_t$ conditioned on $t<\tau$  converges when $t \to \infty$ to a probability measure $\nu_0^\varepsilon$
supported in $\Omega$, called the quasi-stationary distribution. In the limit
$|\varepsilon| \to 0$, one expects the stochastic process to reach the
quasi-stationary distribution before exiting: this is a so-called
metastable exit. Moreover, starting from  $\nu_0^\varepsilon$, the random variable $\tau$ is
exponentially distributed and independent of $X_\tau$: this explains
why the quasi-stationary distribution is the appropriate initial
condition to parametrize an underlying jump Markov process to describe
the exit from the state $\Omega$, see~\cite{di2016jump}. Let us insist
on the fact that these two properties hold for every fixed $\varepsilon$. Therefore, in order
to characterize the exit event in such a metastable situation, one needs to identify the
parameter of the exponential law (the inverse of the mean exit time
$\mathbb E_{\nu_0^\varepsilon}(\tau)$), as well as the probabilities to
leave through each of the exit windows $\mathbb P_{\nu_0^\varepsilon}(X_\tau
\in \Gamma^\varepsilon_k)$, for $k \in \llbracket 1, N\rrbracket$. The
exit rate through the exit regions $\Gamma^\varepsilon_k$, for $k
  \in \llbracket 1, N\rrbracket$, is then $\mathbb P_{\nu_0^\varepsilon}(X_\tau
\in \Gamma^\varepsilon_k) \, / \, \mathbb E_{\nu_0^\varepsilon}(\tau)$.
It is not our objective in this work to precisely justify
mathematically the discussion above: we will rather rely on partial
differential equations to introduce these objects, and start from
there to perform the analysis $|\varepsilon| \to 0$. We refer
to~\cite{collet2013quasi,champagnat-villemonais-23} for general
results
about existence, uniqueness and convergence to the quasi-stationary
distribution and also to~\cite[Theorem 1.1, Proposition 1.3]{Louis}
for complete proofs in
our context. 

\subsection{The quantities of interest}\label{sec:qoi}

In order to introduce the quantities of interest in this work let us
relate the probabilistic objects introduced in the previous section with partial differential equations and spectral
quantities, that will be the focus of this work.

Let us denote by $\mathcal L^\varepsilon$ the opposite of the Laplace operator with
Dirichlet boundary conditions on $\Gamma^\varepsilon_{\mathcal D}$ and Neumann
boundary conditions on $\Gamma^\varepsilon_{\mathcal N}$. This operator can be
easily built as the Friedrichs extension of the symmetric quadratic
form $u \in H^1_{0,\Gamma^\varepsilon_{\mathcal D}} \mapsto \int_\Omega |\nabla
u|^2$, where $H^1_{0,\Gamma^\varepsilon_{\mathcal D}} =\{u \in H^1(\Omega), \, u|_{\Gamma^\varepsilon_{\mathcal D}}=0\}$.
It is well known
(see~\cite[Section 2]{LeRaSt}) that $\mathcal L^\varepsilon$ is a
self-adjoint positive operator with compact resolvent. It has
a discrete spectrum, and its smallest eigenvalue
$\lambda_0^\varepsilon >0$ is simple with an associated
eigenvector $u_0^\varepsilon$ which has a sign on $\Omega$:
\begin{equation}\label{eq:u0}
  \left\{
    \begin{aligned}
  - \Delta u_0^\varepsilon &= \lambda_0^\varepsilon u_0^\varepsilon \text{ in } \Omega,\\
  \partial_n u_0^\varepsilon &= 0 \text{ on } \Gamma^\varepsilon_{\mathcal N},\\
  u_0^\varepsilon &= 0 \text{
    on } \Gamma^\varepsilon_{\mathcal D}.
  \end{aligned}
\right.
\end{equation}
In the
following, we assume that $u_0^\varepsilon$ is normalized such that:
$$\int_{\Omega}  u_0^\varepsilon = -\|u_0^\varepsilon\|_{1}=-1.$$
where here and in the following $\|\cdot\|_p$ denotes the
$L^p(\Omega)$ norm.

The density function $f(t,x)$
of the process $X_t$ reflected on $\Gamma_{\mathcal N}^\varepsilon$ and absorbed
on $\Gamma_{\mathcal D}^\varepsilon$ satisfies (see~\cite[Section 2.5]{Freidlin}):
\begin{equation}\label{eq:FP}
  \left\{
    \begin{aligned}
  \partial_t f &= \Delta f \text{ in } \Omega,\\
  \partial_n f &= 0 \text{ on } \Gamma^\varepsilon_{\mathcal N} \text{ and }   f = 0 \text{
    on } \Gamma^\varepsilon_{\mathcal D},\\
  f(0,\cdot)&= f_0 \text{ in } \Omega,
    \end{aligned}
\right.
\end{equation}
where $f_0$ is the density of $X_0$ and $\partial_n= n \cdot \nabla$
denotes the normal derivative on $\partial \Omega$.
 The law of $X_t$
conditioned on $t<\tau$ thus has the density $f(t,x)/ \int_{\Omega}
f(t,x) \, dx$, and, thanks to the properties of $\mathcal
L^\varepsilon$ mentioned above, it converges to $u_0^\varepsilon/\int_\Omega
u_0^\varepsilon$ when $t \to \infty$. In particular, the quasi-stationary distribution is
$$\nu_0^\varepsilon(\D x) = - u_0^\varepsilon(x) 1_\Omega(x) \, \D x.$$
Moreover, it can be shown that, if $X_0$ is distributed according to
$\nu_0^\varepsilon$, then (i) $\tau$ is exponentially distributed
with parameter $\lambda_0^\varepsilon$:
\begin{equation}\label{eq:esp_tau}
  \E_{\nu_0^\varepsilon}(\tau)=\frac{1}{\lambda_0^\varepsilon},
  \end{equation}
(ii) $\tau$ is independent of
$X_\tau$, and (iii) the law of $X_\tau$ is given by: for any smooth
function $\psi: \Gamma^\varepsilon_{\mathcal D} \to \R$,
\begin{equation}\label{eq:exitlaw}
  \E_{\nu_0^\varepsilon} \Big( \psi(X_\tau) \Big) =\frac{1}{\lambda_0^\varepsilon} \left\langle \partial_n u_0^\varepsilon , \psi \right\rangle_{H^{-1/2}, H^{1/2}(\Gamma_{\mathcal D}^\varepsilon)}.
\end{equation}
We refer to
Appendix~\ref{sec:prob_res} for details.

In view of~\eqref{eq:esp_tau} and~\eqref{eq:exitlaw}, in order to
characterize, in the limit $|\varepsilon| \to 0$, the law of the exit event $(\tau,X_\tau)$ for the process
starting under the quasi-stationary distribution, we have to analyze the asymptotic behaviors of $\lambda_0^\varepsilon$ and
$\partial_n u_0^\varepsilon$ in this limiting regime. This is the
objective of this work. It
will be achieved by constructing a very good quasimode for
$u_0^\varepsilon$ as an expansion in powers of
$K^\varepsilon=(K^\varepsilon_1,\ldots,K^\varepsilon_N)$, where
$K^\varepsilon_k=-1/ \ln(\varepsilon_k/2)$, for $k=1,\ldots,N$. 

\subsection{Bibliographic comments and outline}

As already mentioned above, the narrow escape problem has been
extensively considered by physicists, with in particular a
focus on precise estimates in the small $|\varepsilon|$ regime of the mean exit time as a function of the
(deterministic) initial condition
$X_0=x \in \Omega$~\cite{grigoriev2002kinetics,schuss2007narrow,singer2008narrow,SSH,SSH2,bib2,grebenkov2019full,PWK,CWS}. Some
of these results are restricted to specific geometries (disks or
spheres for example). At variance, in our work, we are
interested in both the exit time and the exit point distribution from a
general two-dimensional domain, but we
assume that the initial condition is the quasi-stationary
distribution $\nu_0^\varepsilon$, which, as explained above, is natural when considering metastable exits. This allows us to use variational tools to identify
good approximations of the first eigenfunction $u_0^\varepsilon$.

In the mathematical literature, let us mention that asymptotic
expansions of the mean first exit time have been obtained
in~\cite{Ammari2} using layer potential
techniques~\cite{livreAmmari} (see also~\cite{nursultanov2021mean} for generalizations to
Riemannian manifolds). The quasimode we will build is
partially inspired by techniques from~\cite{livreAmmari}. Besides,
proofs of the result from~\cite{CWS} have been
obtained in~\cite{ChenFriedman}. In particular, to the best of our
knowledge, no mathematical results dealing with the exit point
distributions have been obtained so far. Let us
also mention related works on the convergence of a reflected Brownian
motion on an asymptotically shrinking neighborhood of a graph~\cite{freidlin1993diffusion,hsu2025asymptotic,hsu2025metastability}.

This work can be seen as a continuation of our previous
result~\cite{LeRaSt} where we study a simplified version of this
problem: in~\cite{LeRaSt}, we assume that the domain is a disk, and we modify the
definition of the exit regions (exits through small disks rather than
subsets of $\partial \Omega$) in order to build a quasimode in the
domain ${\mathcal D}({\mathcal L}^\varepsilon)$ of the operator
$\mathcal L^\varepsilon$. The main two novelties of this work are the
following. First,
we can treat the problem as
stated originally, with exit regions which are vanishing subsets of
the boundary of a
fixed domain $\Omega$. From a technical viewpoint, we are able to use a quasimode which is not in
${\mathcal D}({\mathcal L}^\varepsilon)$ (it is actually even not in the
domain of the associated quadratic form). Second, we obtain high order expansions of the quantities of
interest, and not only first order terms.
The companion paper~\cite{Louis} treats the larger dimension cases $d \ge 3$, 
but with exit regions which are not simply subsets of $\partial \Omega$ (the exit regions are essentially balls around $x^{(k)}$, 
and this simplifies the analysis, see also \cite{LeRaSt}). 
Moreover, the expansions obtained in~\cite{Louis} are only performed at first order in the small parameter $|\varepsilon|$.

The outline of this work is as follows. Section~\ref{sec:QM} is
devoted to the construction of a quasimode, namely a precise
approximation of $u_0^\varepsilon$. This quasimode can then be used to
obtain high order asymptotic results on the mean exit time
$\E_{\nu_0^\varepsilon}(\tau)=1/\lambda_0^\varepsilon$ in
Section~\ref{sec:MET} (see Theorem~\ref{lam0}) and finally on the
probabilities $\mathbb P_{\nu_0^\varepsilon}(X_\tau \in
\Gamma^\varepsilon_k)$ ($k \in \llbracket 1, N\rrbracket$) in
Section~\ref{sec:LFEP} (see Theorem~\ref{thm:exit_pt}).


\section{Construction of the quasimode}\label{sec:QM}

\subsection{Preliminaries}

Remember the geometric setting introduced above:
$\Omega$ is a $\mathcal C^\infty$ two-dimensional domain, and its
boundary is partitioned as: $\partial \Omega =
\overline{\Gamma^\varepsilon_{\mathcal D}} \cup \overline{\Gamma^\varepsilon_{\mathcal N}}$
where $\Gamma^\varepsilon_{\mathcal D}$ and $\Gamma^\varepsilon_{\mathcal N}$ are two disjoint open sets, defined by~\eqref{eq:GammaD} and~\eqref{eq:GammaN}.

Let $k\in \llbracket 1, N\rrbracket$. Since the domain is regular, there exist $U_0$ a neighborhood of $0$ in $\R^2$ as well as $U_k$ a neighborhood of $x^{(k)}$ in $\R^2$ and $\Psi_k:U_0 \to U_k$ a smooth diffeomorphism such that $\Psi_k(0)=x^{(k)}$, $D_0 \Psi_k$, 
the Jacobian of $\Psi_k$ at $0$, is unitary,
$$\Omega\cap U_k=\Psi_k\Big( \left( \R_-^* \times \R \right) \cap U_0 \Big) \qquad \text{and} \qquad \Gamma_k^\varepsilon=\Psi_k\left(\{0\}\times \big(T^-_{k,\varepsilon},T^+_{k,\varepsilon}\big)\right)$$
for some real numbers $T^-_{k,\varepsilon}$, $T^+_{k,\varepsilon}$ satisfying
\begin{align}\label{T-T+}
|T^-_{k,\varepsilon}|\leq T^+_{k,\varepsilon}
\end{align}
as well as 
\begin{align}\label{approxT}
T^-_{k,\varepsilon}=-\varepsilon_k+O(|\varepsilon|^2)\qquad \text{and} \qquad T^+_{k,\varepsilon}=\varepsilon_k+O(|\varepsilon|^2)
\end{align}
where here and in the following, $O(\cdot)$ always refer to the limit
$\varepsilon \to 0$.
For $(0,t)\in U_0$, we will use the parametrization of
$\Gamma^\varepsilon_k$: for $t \in (T^-_{k,\varepsilon}, T^+_{k,\varepsilon})$,
\begin{equation}\label{eq:psik}
  \psi_k(t)=\Psi_k(0,t) \in \Gamma^\varepsilon_k.
  \end{equation}

Our objective is to build a quasimode for the first eigenelements of
the operator ${\mathcal L}^\varepsilon$, namely a good approximation
of $\lambda_0^\varepsilon$ and of the function $u_0^\varepsilon$ introduced above
in~\eqref{eq:u0}. The quasimode will be a function satisfying
\begin{enumerate}[label=\textbullet]
\item an approximation in $L^2(\Omega)$ of the eigenvalue problem~\eqref{eq:u0} (see \eqref{vpappro} below)
\item approximate homogeneous Dirichlet boundary conditions in $L^\infty(\Gamma_{\mathcal D}^\varepsilon)$
\item exact homogeneous Neumann boundary conditions on $\Gamma^\varepsilon_{\mathcal N}$.
\end{enumerate}
We will therefore need some precise estimates both in $L^2(\Omega)$
and in $L^\infty(\Gamma_{\mathcal D}^\varepsilon)$. Let us emphasize that because
of the approximate Dirichlet boundary conditions, the quasimode will not belong
to ${\mathcal D}({\mathcal L}^\varepsilon)$.

Let us now informally explain how the quasimode will be built (the
rigorous construction is done in Sections~\ref{sec:R}
and~\ref{sec:S}).
We will need positive coefficients
$K_k^\varepsilon$ which depend on $\varepsilon_k$, and which satisfy
$K_k^\varepsilon \to 0$ as $\varepsilon_k\to 0$. These coefficients
will be determined later on in order for the quasimode to satisfy
(approximately) the
Dirichlet boundary conditions (see Equation~\eqref{defK} below). 
We will denote $K^\varepsilon=(K^\varepsilon_1,\dots , K^\varepsilon_N)$.
For any multi-index $\gamma\in \N^{N}$, we will denote
$$|\gamma|=\sum_{k=1}^{N} \gamma_k$$
its length and
$$(K^\varepsilon)^\gamma=\prod_{k=1}^{N} (K_k^\varepsilon)^{\gamma_k}$$
the associated monomial in the variables $K^\varepsilon$.
Let $M \ge 1$ be an integer, and let us consider a quasimode of the
form: for all $x \in \Omega$, 
\begin{equation}\label{defphi}
    \varphi^\varepsilon(x) = 1+ 
 \mathop{\sum_{\gamma \in \N^N;}}_{\substack{1\leq |\gamma|\leq M}} (K^\varepsilon)^\gamma f_\gamma^\varepsilon(x)  
\end{equation}
with $f_\gamma^\varepsilon\in L^2(\Omega)$.
Notice that we adopt an unusual notion of expansion as we allow the functions $f_\gamma^\varepsilon$ to depend on $\varepsilon$.
However, we will build the functions $f_\gamma^\varepsilon$ so that
\begin{align}\label{limf}
f_\gamma^\varepsilon=f_\gamma^0+O\left(|\varepsilon|^{1/3}\right) \qquad \text{in } L^2(\Omega).
\end{align}

For any choice of real numbers
$(a_\gamma)_{\gamma \in \N^N, |\gamma|\le M}$ (these
  will be carefully chosen later on in order to satisfy approximately
  the Dirichlet boundary conditions), $\varphi^\varepsilon$ satisfies the approximate eigenvalue problem
  \begin{align}\label{vpappro}
    \left\{
    \begin{aligned}
&\bigg( -\Delta -
 \mathop{\sum_{\gamma \in \N^N;}}_{\substack{1\leq |\gamma|\leq M}}
 a_\gamma (K^\varepsilon)^\gamma \bigg) \varphi^\varepsilon=O\left(
   |K_\varepsilon|^{M+1} \right) +O\left(|\varepsilon|^{1/3}\right)
 \qquad \text{in } L^2(\Omega)\\
 &\partial_n \varphi^\varepsilon = 0 \text{ on }
 \Gamma^\varepsilon_{\mathcal N}
\end{aligned}
    \right.
\end{align}
if for all 
$\gamma\in \N^N$ such that $1\leq |\gamma|\leq M$,
\begin{equation}
\left \{
    \begin{aligned}
        \Delta f_\gamma^\varepsilon & = -a_\gamma-F_\gamma
        &&\text{ in }  \Omega,\\
        \partial_n f_\gamma^\varepsilon & = 0 &&\text{ on } \Gamma^\varepsilon_{\gamma,\mathcal N}
    \end{aligned}
    \right . 
\end{equation}
where here and in the following, 
$$  \Gamma^\varepsilon_{\gamma,\mathcal D}=   
\bigcup_{k \in \mathrm{supp}\, \gamma} \Gamma_k^\varepsilon\text{ and }
\Gamma^\varepsilon_{\gamma,\mathcal N}=  \partial \Omega \setminus \overline{
\Gamma^\varepsilon_{\gamma,\mathcal D}}$$
and
\begin{align}\label{eq:F}
F_\gamma=0\qquad  \text{if } |\gamma|=1 \qquad \qquad \text{while} \quad 
F_\gamma=
	 \mathop{\sum_{\beta \in \N^N;\; \beta\leq \gamma;}}_{\substack{1\leq |\beta|\leq |\gamma|-1}} a_{\gamma-\beta} f_\beta^0 
\qquad \text{ if } |\gamma|>1.
\end{align}
The subscript $\beta\leq \gamma$ means that for all $k\in
\llbracket 1,N \rrbracket$, $\beta_k\leq \gamma_k$, so that
$\gamma-\beta\in \N^N$.

In order to be able to choose the parameters $(a_\gamma)$ and
$(K^\varepsilon_k)$ such that the
quasimode $\varphi^\varepsilon$ vanishes (at least approximately)
on $\Gamma_{\mathcal D}^\varepsilon$, in view of the definition of
$\varphi^\varepsilon$ \eqref{defphi}, we would like
$f^\varepsilon_\gamma$ to be approximately constant on each
$(\Gamma_k^\varepsilon)_{k\in \mathrm{supp}\, \gamma}$:
\begin{equation}
\label{eq:fgamepd}
\left \{
    \begin{aligned}
        \Delta f_\gamma^\varepsilon & = -a_\gamma-F_\gamma
        &&\text{ in }  \Omega,\\
        \partial_n f_\gamma^\varepsilon & = 0 &&\text{ on } \Gamma^\varepsilon_{\gamma,\mathcal N},\\
	f_\gamma^\varepsilon &\approx f_\gamma^\varepsilon(x^{(k)})
        &&\text{ on }\Gamma_k^\varepsilon \; \text{for all }k\in \mathrm{supp}\, \gamma.
    \end{aligned}
    \right . 
  \end{equation}
  A precise meaning to this will be given below.
Note that $F_\gamma$ only depends on the functions $f_\beta^0$ and the coefficients $a_{\gamma-\beta}$ where 
$|\beta|$ and $|\gamma-\beta|$ both belong to $\llbracket 1, |\gamma|-1 \rrbracket$.
The equations \eqref{eq:fgamepd} can thus be solved by induction on $M$.

Let us first consider $M=1$. When $M=1$, the multi-indices $\gamma$ are of the form $\gamma=e_k=(\delta_{k,j})_{1\leq j\leq N}$ for some $k\in \llbracket 1,N \rrbracket$ and we simply denote $f_\gamma^\varepsilon=f_k^\varepsilon$ and $a_\gamma=a_k$.
Equation \eqref{eq:fgamepd} becomes (since $F_\gamma=0$ if $|\gamma|=1$)
\begin{equation}\label{eq:fkepd}
\left \{
    \begin{aligned}
        \Delta f_k^\varepsilon & = -a_k &&\text{ in }  \Omega,\\
        \partial_n f_k^\varepsilon & = 0 &&\text{ on } \partial \Omega \backslash  \overline{\Gamma_k^\varepsilon}\\
	f_k^\varepsilon &\approx f_k^\varepsilon(x^{(k)}) &&\text{ on }  \Gamma_k^\varepsilon.
    \end{aligned}
    \right . 
\end{equation}
Let us explain how the functions $f_k^\varepsilon$
solution to \eqref{eq:fkepd} are built.
For $k\in \llbracket  1,N \rrbracket$ and $t\in [T^-_{k,\varepsilon}\,
, \, T^+_{k,\varepsilon}]$, let us denote
\begin{equation}\label{eq:wk}
  w_k^\varepsilon(t)=\chi_k^\varepsilon(t)\tilde w_k^\varepsilon(t)
  \end{equation}
where
\begin{equation}\label{eq:tildewk}
\tilde w_k^\varepsilon(t)=\left(1-\left(
    \frac{t}{T^+_{k,\varepsilon}} \right)^2\right)^{-1/2}
\end{equation}
and $\chi_k^\varepsilon$ is a smooth function taking values in $[0,1]$ and satisfying
\begin{align}\label{propchi}
\chi_k^\varepsilon\equiv 1 \text{ on } \left[T^-_{k,\varepsilon}+\varepsilon_k^2\, , \, |T^-_{k,\varepsilon}|-\varepsilon_k^2\right]\qquad \text{as well as} \qquad \mathrm{supp} \, \chi_k^\varepsilon \subset \left[T^-_{k,\varepsilon}+\varepsilon_k^2/2\, , \, |T^-_{k,\varepsilon}|-\varepsilon_k^2/2\right].
\end{align}
Note that thanks to \eqref{T-T+} and \eqref{propchi}, we have $w_k^\varepsilon\in L^\infty(
[T^-_{k,\varepsilon}\, , \, T^+_{k,\varepsilon}
]).$
The function $f^\varepsilon_k$ will be defined as:
\begin{equation}\label{deff}
    f_k^\varepsilon(x) = S_k^\varepsilon(x) + R_k^\varepsilon(x)  \in L^2(\Omega)
\end{equation}
where 
\begin{equation}\label{defS}
S_k^\varepsilon(x)=\frac{1}{\|w_k^\varepsilon\|_1}\int_{T^-_{k,\varepsilon}}^{T^+_{k,\varepsilon}} w_k^\varepsilon(t) \ln \Big(|x-\psi_k(t)|\Big) \D t,
\end{equation}
\begin{equation}\label{defR}
R_k^\varepsilon(x)=\frac{1}{\|w_k^\varepsilon\|_1}\int_{T^-_{k,\varepsilon}}^{T^+_{k,\varepsilon}} w_k^\varepsilon(t) \tilde R_k(x,t) \D t
\end{equation}
for some function $\tilde R_k(x,t)$ 
to be determined.
Here the notations $S$ and $R$ stand respectively for \emph{singular} and \emph{regular}.
We will in particular see in section \ref{sec:R} that there exists
some functions $\tilde R_k(x,t)$ such that
$R_k^\varepsilon(x)$ is Hölder continuous and $f_k^\varepsilon$ solves
the first two lines of \eqref{eq:fkepd}.
The last line of \eqref{eq:fkepd} will be satisfied thanks to the choice of the weight $w_k^\varepsilon$ (see Lemma \ref{Sbord} below) and the regularity of $R_k^\varepsilon$.
We will also see in Lemma \ref{f0} that the obtained function $f_k^\varepsilon$ will indeed satisfy \eqref{limf}.

Let us now consider $M>1$ and suppose that the functions
$f^\varepsilon_\gamma$ have been determined for all $\gamma$ such that
$1\leq |\gamma|\leq M-1$. Notice that from~\eqref{eq:F}, $F_\gamma$ is
fully determined for any $\gamma$ such that $|\gamma|=M$.
Let $v_\gamma$ be the solution of
\begin{equation}\label{vgam}
\left \{
    \begin{aligned}
        \Delta v_\gamma & = -F_\gamma + \langle F_\gamma \rangle
        &&\text{ in }  \Omega,\\
        \partial_n v_\gamma & = 0 &&\text{ on } \partial \Omega, \\
\langle v_\gamma \rangle&=0
    \end{aligned}
    \right . 
  \end{equation}
  where here and in the following $\langle \cdot \rangle$ denotes the
  averages over $\Omega$: $\langle F_\gamma \rangle =
  \frac{1}{|\Omega|}\int_\Omega F_\gamma$. 
Since $F_\gamma \in L^2(\Omega)$, one has by elliptic
regularity~\cite[Theorem 9.26]{brezis2011functional} and Sobolev injection
\begin{align}\label{uholder}
v_\gamma \in H^2(\Omega) \qquad \text{and thus}\qquad  v_\gamma \in \mathcal C^\alpha(\overline \Omega) \qquad \text{for all }\alpha\in [0,1).
\end{align}
Thanks to \eqref{eq:fkepd}, for any choice of the
real coefficients $(b_{\gamma,k})_{k\in \mathrm{supp}\, \gamma}$, the function 
\begin{align}\label{fgamma}
f_\gamma^\varepsilon=v_\gamma+\sum_{k\in \mathrm{supp}\, \gamma}  b_{\gamma,k}f_k^\varepsilon
\end{align}
satisfies~\eqref{eq:fgamepd} with $a_\gamma$ defined by:
$$
a_\gamma=-\langle F_\gamma \rangle +\sum_{k\in \mathrm{supp}\, \gamma}
a_k b_{\gamma,k}.
$$
Moreover, as soon as the functions $(f_k^\varepsilon)_{k \in
  \llbracket 1,N\rrbracket}$ satisfy~\eqref{limf}, so do the functions
$f_\gamma^\varepsilon$. 
Note that thanks to~\eqref{eq:F}, the definition~\eqref{fgamma} of
$f_\gamma^\varepsilon$ can be extended to the case $|\gamma|=1$,
i.e. $\gamma=e_k$ for some $k \in \llbracket 1,N\rrbracket$, in order to coincide with $f_k^\varepsilon$ by choosing
\begin{align}\label{c1}
\text{ for any } \gamma \text{ such that } |\gamma|=1, \, v_\gamma=0
  \text{ and } b_{\gamma,k}=1.
\end{align}

At this stage, we have proven the following proposition which already
shows how to build $\varphi^\varepsilon$ such that~\eqref{vpappro} holds.
\begin{prop}\label{blibre}
  Let us introduce two sets of real coefficients: $(a_k)_{k\in
    \llbracket 1, N \rrbracket}$, and for any $\gamma$ such that $|\gamma|>1$, $(b_{\gamma,k})_{k\in \mathrm{supp}\, \gamma}$.
Suppose that for all $k\in \llbracket 1, N \rrbracket$, the function
$f^\varepsilon_k$ satisfies the first two lines of \eqref{eq:fkepd},
as well as~\eqref{limf}.
Suppose that for all $\gamma$ such that $|\gamma|>1$, the functions
$f_\gamma^\varepsilon$ are defined recursively by~\eqref{fgamma} (in
particular, they all satisfy~\eqref{limf}).
Then the function $\varphi^\varepsilon$ defined by \eqref{defphi} satisfies the approximate eigenvalue problem \eqref{vpappro}, with
\begin{align}\label{compgamma}
a_\gamma=-\langle F_\gamma \rangle +\sum_{k\in \mathrm{supp}\, \gamma} a_k b_{\gamma,k}
\end{align}
for all $\gamma$ such that $2\leq |\gamma|\leq M$. In particular, one has
$\partial_n \varphi^\varepsilon=0$ on $\Gamma^\varepsilon_{\mathcal N}$.
\end{prop}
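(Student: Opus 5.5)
The plan is a direct algebraic verification: expand $\big(-\Delta - \sum_{1\le|\gamma|\le M} a_\gamma (K^\varepsilon)^\gamma\big)\varphi^\varepsilon$ and sort the result by monomials $(K^\varepsilon)^\gamma$. First I check that each $f_\gamma^\varepsilon$ built by \eqref{fgamma} satisfies $\Delta f_\gamma^\varepsilon = -a_\gamma - F_\gamma$ in $\Omega$ and $\partial_n f_\gamma^\varepsilon=0$ on $\Gamma^\varepsilon_{\gamma,\mathcal N}$, with $a_\gamma$ given by \eqref{compgamma}. By \eqref{vgam}, $\Delta v_\gamma = -F_\gamma+\langle F_\gamma\rangle$. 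By hypothesis $\Delta f_k^\varepsilon=-a_k$, so
$$\Delta f_\gamma^\varepsilon = -F_\gamma + \langle F_\gamma\rangle - \sum_{k\in\mathrm{supp}\,\gamma} a_k b_{\gamma,k} = -F_\gamma - a_\gamma .$$
For the Neumann condition: $v_\gamma$ has zero normal derivative on all of $\partial\Omega$, and each $f_k^\varepsilon$ with $k\in\mathrm{supp}\,\gamma$ has zero normal derivative on $\partial\Omega\setminus\overline{\Gamma_k^\varepsilon}\supset \Gamma^\varepsilon_{\gamma,\mathcal N}$. The case $|\gamma|=1$ is covered by the convention \eqref{c1}. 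Since $\Gamma^\varepsilon_{\mathcal N}\subset \Gamma^\varepsilon_{\gamma,\mathcal N}$ for every $\gamma$, summing gives $\partial_n\varphi^\varepsilon=0$ on $\Gamma^\varepsilon_{\mathcal N}$. Property \eqref{limf} for $f_\gamma^\varepsilon$ follows by induction on $|\gamma|$ from \eqref{fgamma}, because $v_\gamma$ is independent of $\varepsilon$: it only involves $F_\gamma$, which uses the limits $f^0_\beta$.

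Next I expand. We have $-\Delta\varphi^\varepsilon = \sum_\gamma (K^\varepsilon)^\gamma (a_\gamma + F_\gamma)$. Also
$$\sum_{\gamma} a_\gamma (K^\varepsilon)^\gamma \varphi^\varepsilon = \sum_\gamma a_\gamma (K^\varepsilon)^\gamma + \sum_{\gamma,\beta} a_\gamma f_\beta^\varepsilon (K^\varepsilon)^{\gamma+\beta}.$$
The constant terms $a_\gamma(K^\varepsilon)^\gamma$ cancel. In the double sum, I regroup by $\eta=\gamma+\beta$. Terms with $|\eta|\le M$ give exactly $\sum_{\beta\le\eta,\,1\le|\beta|\le|\eta|-1} a_{\eta-\beta} f_\beta^\varepsilon$. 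This equals $F_\eta$ from \eqref{eq:F}, except that $f_\beta^\varepsilon$ appears in place of $f^0_\beta$. Terms with $|\eta|= 0$ or $1$ do not occur, and $F_\eta=0$ for $|\eta|=1$, consistent with \eqref{eq:F}.

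So the residual is
$$\sum_{|\eta|\le M}(K^\varepsilon)^\eta\sum_{\beta}a_{\eta-\beta}\big(f^0_\beta-f^\varepsilon_\beta\big)\;-\;\sum_{|\gamma+\beta|>M}a_\gamma f_\beta^\varepsilon (K^\varepsilon)^{\gamma+\beta}.$$
By \eqref{limf}, the first sum is $O(|\varepsilon|^{1/3})$ in $L^2(\Omega)$, since $K^\varepsilon$ is bounded. The second sum is $O(|K^\varepsilon|^{M+1})$, because the $f_\beta^\varepsilon$ are bounded in $L^2$ uniformly in $\varepsilon$, again by \eqref{limf}.

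The only step needing care is this bookkeeping, namely the correct regrouping of multi-indices and the uniform $L^2$ bounds on $f^\varepsilon_\beta$. There is no analytic obstacle, since existence of $v_\gamma$ is guaranteed by the compatibility built into \eqref{vgam} through subtracting $\langle F_\gamma\rangle$.
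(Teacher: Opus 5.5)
Your proposal is correct and follows the paper's route: the paper's proof is the construction preceding the proposition. It checks that $f_\gamma^\varepsilon=v_\gamma+\sum_{k\in\mathrm{supp}\,\gamma}b_{\gamma,k}f_k^\varepsilon$ solves \eqref{eq:fgamepd} with $a_\gamma$ given by \eqref{compgamma}, and then matches monomials $(K^\varepsilon)^\gamma$ in the ansatz \eqref{defphi}. You go further only by writing out the residual bookkeeping the paper just asserts, namely the $O(|\varepsilon|^{1/3})$ error from replacing $f_\beta^0$ by $f_\beta^\varepsilon$ and the $O(|K^\varepsilon|^{M+1})$ truncation terms.
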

\hop
The values of $(a_k)_{k\in \llbracket 1, N \rrbracket}$ will be fixed
in Lemma~\ref{lemcomp} below by a simple compatibility condition,
required to build the functions $(f^\varepsilon_k)_{k\in \llbracket 1, N \rrbracket}$. The other coefficients will be determined using a recursion on
  $|\gamma|=m$: for $|\gamma|=m=1$, the coefficients $(K^\varepsilon_k)_{k
  \in \llbracket 1, N\rrbracket}$ will be chosen so that the function
$\varphi^\varepsilon$ truncated at order $M=1$ (see~\eqref{defphi}) approximately satisfies the
homogeneous Dirichlet boundary conditions on
$\Gamma_{\mathcal D}^\varepsilon$ at order~$1$ (i.e. its $L^\infty$ norm on $\Gamma^\varepsilon_{\mathcal D}$
is $O(|K^\varepsilon|)$, see~\eqref{defK}). For $|\gamma|=m\ge 1$,
assuming that, for all $\beta$ such that $|\beta| <m$, the
coefficients  $(b_{\beta,k})_{k\in \mathrm{supp}\,
  \beta}$ and $a_\beta$ 
have been determined, one can build the functions~$F_\gamma$
(see~\eqref{eq:F}), $v_\gamma$ (see~\eqref{vgam}) and thus,
for any choice of the  coefficients  $(b_{\gamma,k})_{k\in \mathrm{supp}\,
  \gamma}$, the coefficients
$a_\gamma$ and the associated
functions $f^\varepsilon_\gamma$ (see~\eqref{fgamma}) which
satisfy~\eqref{eq:fgamepd}. The coefficients  $(b_{\gamma,k})_{k\in \mathrm{supp}\,
  \gamma}$ will then be chosen such that the function
$\varphi^\varepsilon$ truncated at order $M=m$ (see~\eqref{defphi}) approximately satisfies the
homogeneous Dirichlet boundary conditions on
$\Gamma_{\mathcal D}^\varepsilon$ at order $m$ (its $L^\infty$ norm on
$\Gamma^\varepsilon_{\mathcal D}$ is $O(|K^\varepsilon|^m)$, see
Lemmas~\ref{coeffcercle} and~\ref{resolcercle}). This
recursive procedure is iterated until $|\gamma|=M$. The following
sections will provide details about this recursive construction.

\subsection{Study of $R_k^\varepsilon$}\label{sec:R}

The objective of this section is to prove results on the function
$R_k^\varepsilon$ defined by~\eqref{defR}, and in particular to
precisely define the function $\tilde R_k$.
Since for all $t\in [T^-_{k,\varepsilon},T^+_{k,\varepsilon}]$ it
holds (remember the definition~\eqref{eq:psik} of $\psi_k$, and that
$n_x$ denotes the outward unit normal to $\Omega$ at $x \in \partial \Omega$)
\begin{equation}\label{eq:log}
\left \{
    \begin{aligned}
        \Delta_x \left( \ln \Big(|x-\psi_k(t)|\Big) \right)  & = 0
        &&\text{ in }  \Omega,\\
        \partial_{n_x} \left( \ln \Big(|x-\psi_k(t)|\Big) \right) & = N_k(x,t) &&\text{ on } \partial \Omega \backslash  \{\psi_k(t)\}
    \end{aligned}
    \right . 
\end{equation}
where 
$$N_k(x,t)=\frac{n_x\cdot (x-\psi_k(t))}{|x-\psi_k(t)|^2},$$
we are looking in view of \eqref{eq:fkepd}, \eqref{deff}, \eqref{defS}
and \eqref{defR} for a function $\tilde R_k(x,t)$
satisfying
\begin{equation}\label{eq:Rkepd}
\forall t \in [T^-_{k,\varepsilon},T^+_{k,\varepsilon}], \quad \left \{
    \begin{aligned}
        \Delta_x \tilde R_k(x,t)  & = -a_k
        &&
 x\in \Omega, \\
        \partial_{n_x} \tilde R_k(x,t) & = -N_k(x,t) &&
 x\in \partial \Omega \backslash  \{\psi_k(t)\}.
    \end{aligned}
    \right . 
  \end{equation}
Notice that the function $\tilde R_k$ only depends on $\varepsilon$
through the domain of the parameter $t$.

The function $N_k(\cdot,t)$ is a bounded function over the
whole of $\partial \Omega$ as shown in the next result.
\begin{lem}\label{Ninfty}
There exists $C>0$ such that for all $\varepsilon\in (\R_+^*)^N$ and $t\in [T^-_{k,\varepsilon},T^+_{k,\varepsilon}]$, the function $N_k(\cdot,t)$ belongs to $L^\infty(\partial \Omega)$ and
$$\|N_k(\cdot,t)\|_\infty\leq C.$$
\end{lem}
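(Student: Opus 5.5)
The plan is to prove the classical fact that for a $\mathcal C^2$ (here $\mathcal C^\infty$) compact curve $\partial\Omega$, the quantity $n_x\cdot(x-y)/|x-y|^2$ stays bounded uniformly for $x,y\in\partial\Omega$, $x\neq y$. Since $\psi_k(t)\in\partial\Omega$ for every admissible $t$, the lemma then follows with a constant independent of $\varepsilon$ and $t$. We also use that $|T^\pm_{k,\varepsilon}|$ is small, so that $\psi_k(t)$ stays in a compact subset of $U_k$; we may restrict to $\varepsilon$ small, or more simply argue for all $y\in\partial\Omega$.

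We split into two regimes according to $|x-y|$. Fix $\delta>0$, to be chosen from the geometry. When $|x-y|\geq\delta$, we have $|N_k(x,t)|\leq 1/|x-y|\leq 1/\delta$ directly.

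When $|x-y|<\delta$, we use a local description of $\partial\Omega$ near $y$. By compactness and smoothness, there is $\delta>0$ such that for every $y\in\partial\Omega$ the portion of $\partial\Omega$ within distance $2\delta$ of $y$ is a graph over the tangent line at $y$ with uniformly bounded $\mathcal C^2$ norm. Alternatively, we parametrize $\partial\Omega$ by arclength with a smooth $2\pi$-periodic (after rescaling) map $\gamma$. Writing $x=\gamma(s)$ and $y=\gamma(s')$, Taylor expansion at $s$ gives
\[
x-y=\gamma'(s)(s-s')-\tfrac12\gamma''(\xi)(s-s')^2 .
\]
Here we use an integral form of the remainder, so that it is vector-valued and bounded by $\tfrac12\|\gamma''\|_\infty (s-s')^2$. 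Since $n_x\perp\gamma'(s)$, this yields $|n_x\cdot(x-y)|\leq \tfrac12\|\gamma''\|_\infty|s-s'|^2$. On the other hand, for $|s-s'|$ small, $|x-y|\geq c|s-s'|$ with $c>0$ uniform. This follows from $|\gamma'|=1$ and the same Taylor expansion: $|x-y|\geq |s-s'|-\tfrac12\|\gamma''\|_\infty|s-s'|^2\geq \tfrac12|s-s'|$.

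Combining the two estimates gives $|N_k(x,t)|\leq \|\gamma''\|_\infty/c^2$ in the near regime. The step needing care is making "$|x-y|<\delta$" imply "$|s-s'|$ small" (taking the periodic distance). This holds because $\gamma$ is a continuous injective map from the circle, hence a homeomorphism onto its image, so its inverse is uniformly continuous on the compact set $\partial\Omega$. If $\partial\Omega$ has several connected components, we treat each one separately, and pairs on distinct components are automatically in the far regime since distinct components are at positive distance from each other. The resulting constant depends only on $\Omega$, which proves the lemma.
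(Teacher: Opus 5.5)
Your proof is correct and rests on the same mechanism as the paper's: $N_k$ is trivially bounded away from the singular point, and near it a Taylor expansion along $\partial\Omega$ uses the orthogonality of $n_x$ to the tangent vector to get a numerator of order $|s-s'|^2$, against a denominator bounded below by a multiple of $|s-s'|^2$. The difference is organizational: you prove the bound uniformly over all $y\in\partial\Omega$ using an arclength parametrization and an explicit threshold $\delta$ (obtained from uniform continuity of the inverse parametrization), whereas the paper works in the chart $\Psi_k$ and splits according to whether $x\in U_k$, leaving implicit both the ``up to reducing $U_0$'' step and the positive distance between $\psi_k(t)$ and $\partial\Omega\setminus U_k$, which your version makes explicit.
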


\begin{proof}
We already have
$$\sup_{(x,t)\in \partial \Omega\backslash U_k \times [T^-_{k,\varepsilon},T^+_{k,\varepsilon}] } |N_k(x,t)|\leq C$$
since $N_k$ is smooth on $\partial \Omega\backslash U_k \times [T^-_{k,\varepsilon},T^+_{k,\varepsilon}]$.
Now, for $x=\psi_k(\theta)\in  U_k \cap \partial \Omega$ (with $(0,\theta) \in U_0$) and $t\in [T^-_{k,\varepsilon},T^+_{k,\varepsilon}]$, we have
$$x-\psi_k(t)=\psi_k(\theta)-\psi_k(t)=\psi_k'(\theta)(\theta-t)+O\left((\theta-t)^2\right).$$
Since $\psi_k'(\theta)\perp n_x$ and $|\psi_k'(\theta)|\geq \frac1C$
(for a constant $C>0$ independent of $x \in U_k \cap \partial \Omega$), this gives
$$|N_k(x,t)| =  \frac{O\left((\theta-t)^2\right)}{|\psi_k'(\theta)|^2 (\theta-t)^2 +O\left( (\theta-t)^3 \right) }\leq C^2$$
up to reducing $U_0$.
This proves the statement.
\end{proof}
Thanks to Lemma \ref{Ninfty}, the system \eqref{eq:Rkepd} can be replaced by
\begin{equation}\label{eq:Rkepd2}
  \forall t \in [T^-_{k,\varepsilon},T^+_{k,\varepsilon}], \qquad \left \{
    \begin{aligned}
        \Delta_x \tilde R_k(x,t)  & = -a_k
        &&\text{ in } \Omega, \\
        \partial_{n_x} \tilde R_k(x,t) & = -N_k(x,t) &&\text{ on } 
\partial \Omega.
    \end{aligned}
    \right . 
\end{equation}

\begin{lem}\label{lemcomp}
The compatibility condition from \eqref{eq:Rkepd2} yields
$$a_k =\frac{\pi}{|\Omega|}.$$
\end{lem}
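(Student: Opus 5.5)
The plan is to integrate the first line of \eqref{eq:Rkepd2} over $\Omega$ and apply the divergence theorem. For a Neumann problem $\Delta u = g$ in $\Omega$, $\partial_n u = h$ on $\partial\Omega$, a solution exists only if $\int_\Omega g = \int_{\partial\Omega} h$. Applied to \eqref{eq:Rkepd2} for a fixed $t$, this condition reads
\begin{equation}
-a_k|\Omega| = -\int_{\partial\Omega} N_k(x,t)\,\D\sigma(x).
\end{equation}
Here $N_k(\cdot,t)\in L^\infty(\partial\Omega)$ by Lemma~\ref{Ninfty}, so the right-hand side is well defined. It therefore suffices to show that
\begin{equation}
\int_{\partial\Omega} \frac{n_x\cdot (x-y)}{|x-y|^2}\,\D\sigma(x)=\pi \qquad\text{for every } y=\psi_k(t)\in\partial\Omega.
\end{equation}
The value is then independent of $t$, as it must be for $a_k$ to be a constant.

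To compute this boundary integral, note that $N_k(x,t)=\partial_{n_x}\ln|x-y|$ and that $\ln|\cdot-y|$ is harmonic in $\Omega$. The only obstruction to applying Green's formula is the singularity at $y\in\partial\Omega$. I will excise it: set $\Omega_r=\Omega\setminus \overline{B(y,r)}$ and apply the divergence theorem to the harmonic function $\ln|x-y|$ on $\Omega_r$. This gives
\begin{equation}
\int_{\partial\Omega\setminus B(y,r)} N_k(x,t)\,\D\sigma(x) = \int_{\Omega\cap \partial B(y,r)} \frac{1}{r}\,\D\sigma.
\end{equation}
On the arc $\Omega\cap\partial B(y,r)$ the normal pointing outward from $\Omega_r$ points toward $y$, and this is what makes the sign come out positive. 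The right-hand side equals the angular measure of that arc, and since $\partial\Omega$ is $\mathcal C^\infty$ near $y$, this angle tends to $\pi$ as $r\to0$. On the left, letting $r\to0$ gives the full boundary integral by dominated convergence, using the uniform bound of Lemma~\ref{Ninfty}.

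The main point requiring care is the claim that the arc angle is $\pi+O(r)$. To justify it, I will use the local chart $\Psi_k$ from the preliminaries, whose Jacobian $D_0\Psi_k$ is unitary; in the chart, $\Omega$ is locally a half-plane up to a smooth perturbation. One subtlety is that $D\Psi_k$ is unitary only at $0$ and not at $\Psi_k^{-1}(y)$. However, tangency of the boundary at $y$ is all that is needed: near $y$, $\partial\Omega$ is a graph over its tangent line with deviation $O(|x-y|^2)$. Hence on $\partial B(y,r)$ the domain occupies an angular sector of opening $\pi+O(r)$. Combining the two sides gives $a_k|\Omega|=\pi$, that is, $a_k=\pi/|\Omega|$.

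Finally, \eqref{eq:Rkepd2} has a non-smooth Neumann datum. Since it is in $L^\infty$, the compatibility condition can be read weakly: take the test function $1$ in the variational formulation $\int_\Omega\nabla\tilde R\cdot\nabla v=\int_\Omega a_k v-\int_{\partial\Omega}N_k v$. This yields the same identity and completes the argument.
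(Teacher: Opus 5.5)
Your proposal is correct and follows essentially the same route as the paper. Both derive the compatibility condition, excise a small ball around $\psi_k(t)$, apply Green's formula to the harmonic function $\ln|x-\psi_k(t)|$ so that the arc contributes its angular opening, which tends to $\pi$, and use Lemma~\ref{Ninfty} to control the boundary piece inside the ball. Your $\pi+O(r)$ bound on the arc angle, obtained from the graph representation, is sharper than the paper's $\pi+O(\sqrt{\delta})$ from the cosine identity, but only the limit matters.
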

Combined with \eqref{compgamma} and \eqref{c1}, this lemma confirms that for all $1\leq |\gamma|\leq M$, the choice of $a_\gamma$ is entirely determined by the choice of $(b_{\gamma,k})_{k\in \mathrm{supp}\, \gamma}$.

\begin{proof}
  Let $t\in [T^-_{k,\varepsilon},T^+_{k,\varepsilon}]$ be fixed and let us denote $I(t)= \int_{\partial \Omega} N_k(x,t) d\sigma(x)$.
 For
  the partial differential equation~\eqref{eq:Rkepd2} to admit a
  solution, one needs the compatibility condition:
  $$-a_k |\Omega| = \int_\Omega \Delta_x \tilde R_k(x,t)
  \, dx = \int_{\partial \Omega} \partial_{n_x} \tilde
  R^\varepsilon_k(x,t) \sigma(dx) = -I(t).$$
  We will show that whatever $k \in \llbracket 1,N\rrbracket$ and $t \in
  [T^-_{k,\varepsilon},T^+_{k,\varepsilon}]$, $I(t)=\pi$ and this
  will conclude the proof.
 
For all $\delta >0$, the partition $\Omega=\left(\Omega \cap B(\psi_k(t),\delta)\right) \sqcup \left( \Omega \backslash B(\psi_k(t), \delta) \right) $ yields
\begin{align}\label{I}
I(t)=I_1(t)-I_2(t)+I_3(t)
\end{align}
with
$$I_1(t)=\int_{\partial \left(\Omega\backslash B(\psi_k(t),\delta)\right)} N_k(x,t) \,   \D \sigma(x),$$
$$I_2(t)=\int_{\mathcal C(\psi_k(t),\delta)\cap \overline{\Omega}}  N_k(x,t) \, \D \sigma(x),$$
and
$$I_3(t)=\int_{\partial \Omega \cap B(\psi_k(t),\delta)} N_k(x,t) \,
\D \sigma(x),$$
where in $I_1$ and $I_2$, the vector $n_x$ for $x \in \mathcal
C(\psi_k(t),\delta)\cap \overline{\Omega}$ (which appears in the
definition of $N_k$) is defined as the unit
outward normal to $\Omega\backslash B(\psi_k(t),\delta)$.
Now, Green's formula and \eqref{eq:log} yield 
\begin{align}\label{I1}
I_1(t)=\int_{ \Omega\backslash B(\psi_k(t),\delta)} \Delta \ln \left| x - \psi_k(t)\right| \,   \D x=0.
\end{align}
For the computation of $I_2(t)$, let us study $\mathcal C(\psi_k(t),\delta)\cap \overline{\Omega}$ more carefully.
There exist 
\begin{align}\label{y+-}
y_-=t-\frac{\delta}{|\psi_k'(t)|}+O(\delta^2)\qquad \text{and} \qquad   y_+=t+\frac{\delta}{|\psi_k'(t)|}+O(\delta^2)
\end{align}
such that $\mathcal C(\psi_k(t),\delta)\cap \partial \Omega=\{\psi_k(y_-),\psi_k(y_+)\}$.
Besides, there exist $\theta_-\in \R$ and $\theta_+\in [\theta_-,\theta_-+2\pi]$ such that
$$\mathcal C(\psi_k(t),\delta)\cap \overline{\Omega}=\{\psi_k(t)+\delta(\cos \theta, \sin \theta)\, ; \, \theta \in [\theta_-, \theta_+] \}$$
We therefore have
$$\left(\psi_k(y_-)-\psi_k(t)\right)\cdot \left(\psi_k(y_+)-\psi_k(t)\right)=\delta^2 \cos \left(\theta_+-\theta_-\right).$$
On the other hand, we have by a Taylor expansion and using \eqref{y+-}
$$\left(\psi(y_-)-\psi_k(t)\right)\cdot \left(\psi(y_+)-\psi_k(t)\right)=|\psi_k'(t)|^2(y_--t)(y_+-t) +O(\delta^3)
=\delta^2 \big( -1+O(\delta) \big).$$
We deduce that $\cos\left(\theta_+-\theta_-\right)=-1+O(\delta)$, i.e 
\begin{align}\label{theta-theta}
\theta_+-\theta_-=\pi+O(\sqrt \delta).
\end{align}
Now, since $n_x=\delta^{-1}(\psi_k(t)-x)$ on $\mathcal C(\psi_k(t),\delta)$, we obtain using polar coordinates centered at $\psi_k(t)$ that
\begin{align}\label{I2}
I_2(t)=-\int_{\mathcal C(\psi_k(t),\delta)\cap \overline{\Omega}}  \frac{\delta^{-1}\delta^2}{\delta^2} \, \D \sigma(x)=-\int_{\theta_-}^{\theta_+} \D \theta=-\pi+O(\sqrt\delta)
\end{align}
thanks to \eqref{theta-theta}.
Finally, Lemma \ref{Ninfty} yields the following estimate of $I_3(t)$:
\begin{align}\label{I3}
|I_3(t)|\leq \int_{\partial \Omega \cap B(\psi_k(t),\delta)} \|N_k(.,t)\|_\infty   \,  \D \sigma(x)\leq C \sigma\left( \partial \Omega \cap B(\psi_k(t),\delta) \right)\leq C \delta.
\end{align}
Combining \eqref{I}, \eqref{I1}, \eqref{I2} as well as \eqref{I3} and
taking the limit $\delta \to 0$ yields $I(t)=\pi$ which concludes the proof.
\end{proof}



The following is a direct consequence of \cite[Proposition 2.3]{Aramaki} (we restrict ourselves to $p>2$ so
  that $W^{1,p}(\Omega)$ is continuously embedded into ${\mathcal
    C}^0(\overline \Omega)$, and thus $\tilde
  R_k(x^{(k)},t )$ is well defined).
\begin{prop}\label{solR}
For all $t\in [T^-_{k,\varepsilon},T^+_{k,\varepsilon}]$ and any $p
\in (2, \infty)$, there exists
a unique solution $\tilde R_k(\cdot,t) \in W^{1,p}(\Omega)$ to
\eqref{eq:Rkepd2} satisfying $\tilde R_k(x^{(k)},t ) = 0$.
Moreover, for all $p \in (2,\infty)$, there exists $C>0$ such that for all $t\in [T^-_{k,\varepsilon},T^+_{k,\varepsilon}]$, it holds
$$\|\tilde R_k(\cdot,t) \|_{W^{1,p}(\Omega)}\leq C.$$
\end{prop}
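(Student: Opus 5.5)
We reduce Proposition~\ref{solR} to the Neumann problem treated in \cite[Proposition 2.3]{Aramaki}. For fixed $t\in[T^-_{k,\varepsilon},T^+_{k,\varepsilon}]$, \eqref{eq:Rkepd2} is a Neumann problem with constant source $-a_k$ and boundary datum $g_t:=-N_k(\cdot,t)$. Two properties of the data are needed. First, by Lemma~\ref{Ninfty}, $g_t\in L^\infty(\partial\Omega)$ with $\|g_t\|_\infty\le C$ uniformly in $t$ and $\varepsilon$. Second, by Lemma~\ref{lemcomp}, $\int_{\partial\Omega} g_t\,\D\sigma=-\pi=-a_k|\Omega|$, so the compatibility condition holds exactly with $a_k=\pi/|\Omega|$.

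Existence is then obtained variationally. We seek $u\in H^1(\Omega)$ with $\int_\Omega\nabla u\cdot\nabla v=-a_k\int_\Omega v+\int_{\partial\Omega}g_t v$ for all $v\in H^1(\Omega)$. The right-hand side vanishes on constants, by compatibility, and is continuous on $H^1$ by the trace theorem. Lax--Milgram on the mean-zero subspace, together with Poincaré--Wirtinger, gives a solution unique up to additive constants.

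For the $W^{1,p}$ regularity we apply \cite[Proposition 2.3]{Aramaki}, an $L^p$ Neumann regularity result for smooth domains. With $f\in L^\infty(\Omega)$ and $g\in L^\infty(\partial\Omega)\subset W^{-1/p,p}(\partial\Omega)$, it gives $u\in W^{1,p}(\Omega)$ and, for the mean-zero solution,
\[
\|u\|_{W^{1,p}}\le C_p\bigl(\|f\|_{L^p}+\|g\|_{L^\infty(\partial\Omega)}\bigr).
\]
Since $f=-a_k$ is a constant independent of $t$ and $\|g_t\|_\infty\le C$, the mean-zero solution $u_t$ satisfies $\|u_t\|_{W^{1,p}}\le C$ uniformly in $t$.

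Since $p>2$, the embedding $W^{1,p}(\Omega)\hookrightarrow\mathcal C^0(\overline\Omega)$ makes point values well defined and gives $|u_t(x^{(k)})|\le C\|u_t\|_{W^{1,p}}$. We set $\tilde R_k(\cdot,t):=u_t-u_t(x^{(k)})$. Subtracting a constant preserves \eqref{eq:Rkepd2}, and it gives $\tilde R_k(x^{(k)},t)=0$ and $\|\tilde R_k(\cdot,t)\|_{W^{1,p}}\le\|u_t\|_{W^{1,p}}+|u_t(x^{(k)})|\,|\Omega|^{1/p}\le C$. Uniqueness holds because two $W^{1,p}$ solutions differ by an $H^1$ solution of the homogeneous Neumann problem. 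Such a difference is constant, and the constant is zero since both solutions vanish at $x^{(k)}$.

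The main obstacle is checking the function-space setting of \cite{Aramaki}: it must treat Neumann data merely in $L^\infty$ (or $L^p(\partial\Omega)$), as is natural since $N_k$ is only bounded, with a constant depending only on $\Omega$ and $p$. If that reference needs more regular data, we would instead use the Neumann function representation $u(x)=\int_{\partial\Omega}G(x,y)g(y)\,\D\sigma(y)+\dots$. The kernel $G$ has a logarithmic singularity, so $\nabla_xG$ behaves like $|x-y|^{-1}$, and $\int_{\partial\Omega}|x-y|^{-1}g$ lies in $L^p(\Omega)$ for every $p<\infty$ by a direct computation in two dimensions. This yields the same uniform bound.
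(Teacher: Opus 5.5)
Your proposal follows the paper's route: the paper obtains Proposition~\ref{solR} directly from \cite[Proposition 2.3]{Aramaki}, and you correctly make explicit what it leaves implicit (the uniform $L^\infty$ bound on $N_k$ from Lemma~\ref{Ninfty}, compatibility from Lemma~\ref{lemcomp}, normalization at $x^{(k)}$ via $W^{1,p}(\Omega)\hookrightarrow\mathcal C^0(\overline\Omega)$ for $p>2$, and uniqueness up to constants). One sign slip to fix: the weak form of $\Delta u=-a_k$, $\partial_n u=g_t$ is $\int_\Omega\nabla u\cdot\nabla v=a_k\int_\Omega v+\int_{\partial\Omega}g_t v$, with a plus sign, and only this version has a right-hand side vanishing on constants, since $a_k|\Omega|=\pi=-\int_{\partial\Omega}g_t$.
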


\hip
The notation $\tilde R_k$ will now refer to the solution of \eqref{eq:Rkepd2} given by Proposition \ref{solR}.

\begin{cor}\label{Rhold}
The function $R_k^\varepsilon $ defined in \eqref{defR} satisfies 
\begin{equation}
\left \{
    \begin{aligned}
        \Delta  R_k^\varepsilon  & = -a_k
        &&\text{ on }
 \Omega, \\
        \partial_{n}  R_k^\varepsilon & = \frac{-1}{\|w_k^\varepsilon\|_1}\int_{T^-_{k,\varepsilon}}^{T^+_{k,\varepsilon}}  w_k^\varepsilon(t)  N_k(x,t) \D t &&\text{ on } 
\partial \Omega \backslash  \Gamma^\varepsilon_k
    \end{aligned}
    \right . 
\end{equation}
as well as $R_k^\varepsilon(x^{(k)})=0$.
Moreover, $R_k^\varepsilon$ belongs to $W^{1,3}(\Omega)$ (and thus to $\mathcal C^{1/3}(\overline{\Omega})$) with
$$\|R_k^\varepsilon \|_{W^{1,3}(\Omega)}\leq C \qquad \text{and} \qquad 
\|R_k^\varepsilon \|_{\mathcal C^{1/3}(\overline{\Omega})}\leq C.$$
\end{cor}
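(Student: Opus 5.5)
The plan is to treat $R_k^\varepsilon$ as the Bochner integral in $W^{1,p}(\Omega)$, with $p=3$, of the map $t\mapsto \tilde R_k(\cdot,t)$ against the probability weight $w_k^\varepsilon/\|w_k^\varepsilon\|_1$. The properties of $R_k^\varepsilon$ then follow from the corresponding properties of $\tilde R_k(\cdot,t)$ given by Proposition~\ref{solR}, by linearity.

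\emph{Step 1: measurability and the norm bounds.} The first step is to check that $t\mapsto \tilde R_k(\cdot,t)\in W^{1,3}(\Omega)$ is measurable. I would get this from continuity: the boundary data $N_k(\cdot,t)$ depend continuously on $t$ in $L^q(\partial\Omega)$ for every finite $q$. This follows from Lemma~\ref{Ninfty} (uniform bound) together with pointwise continuity away from $x=\psi_k(t)$ and dominated convergence. Moreover, the solution map of~\eqref{eq:Rkepd2} with normalization $\tilde R_k(x^{(k)},t)=0$ is linear and bounded from boundary data to $W^{1,3}$; this is the estimate behind Proposition~\ref{solR}.

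Next, note that $w_k^\varepsilon\ge 0$, since $\chi_k^\varepsilon\in[0,1]$ and $\tilde w_k^\varepsilon>0$ on the support of $\chi_k^\varepsilon$. Hence the weight $w_k^\varepsilon/\|w_k^\varepsilon\|_1$ has total mass one. The triangle inequality for Bochner integrals then gives
\[
\|R_k^\varepsilon\|_{W^{1,3}(\Omega)}\le \sup_t \|\tilde R_k(\cdot,t)\|_{W^{1,3}(\Omega)}\le C,
\]
with $C$ independent of $\varepsilon$ by Proposition~\ref{solR}. The Morrey embedding $W^{1,3}(\Omega)\hookrightarrow \mathcal C^{1-2/3}(\overline\Omega)=\mathcal C^{1/3}(\overline\Omega)$, valid for a smooth bounded domain in dimension two, yields the Hölder bound. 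Point evaluation at $x^{(k)}$ is continuous on $W^{1,3}$, so it commutes with the integral, and $\tilde R_k(x^{(k)},t)=0$ gives $R_k^\varepsilon(x^{(k)})=0$.

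\emph{Step 2: the equations.} I would verify the PDE in weak form. For each $t$ and every $v\in H^1(\Omega)$,
\[
\int_\Omega \nabla_x\tilde R_k(x,t)\cdot\nabla v\,\D x=a_k\int_\Omega v\,\D x-\int_{\partial\Omega}N_k(x,t)\,v\,\D\sigma(x).
\]
I would integrate this identity against $w_k^\varepsilon(t)\,\D t/\|w_k^\varepsilon\|_1$ and use Fubini. Fubini applies because $|N_k|\le C$, the weight is integrable, and $\nabla\tilde R_k$ is bounded in $L^3$ uniformly in $t$. This gives
\[
\int_\Omega \nabla R_k^\varepsilon\cdot\nabla v\,\D x=a_k\int_\Omega v\,\D x-\int_{\partial\Omega}\Big(\frac{1}{\|w_k^\varepsilon\|_1}\int w_k^\varepsilon(t)N_k(x,t)\,\D t\Big)v\,\D\sigma(x).
\]
Taking $v$ compactly supported in $\Omega$ gives $\Delta R_k^\varepsilon=-a_k$. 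The boundary term then identifies the Neumann trace as the averaged $-N_k$ on all of $\partial\Omega$, in particular on $\partial\Omega\setminus\Gamma_k^\varepsilon$. There the averaged datum is in fact smooth, since $x$ stays away from the support of $w_k^\varepsilon$, so the normal derivative holds in the classical sense locally.

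\emph{Main obstacle.} I expect the delicate point to be the measurability/continuity of $t\mapsto\tilde R_k(\cdot,t)$ needed for the Bochner integral, because $N_k(\cdot,t)$ is discontinuous at $x=\psi_k(t)$. If that is troublesome, I would avoid Bochner integration altogether. The alternative is to define $R_k^\varepsilon$ directly as the $W^{1,3}$ solution of the Neumann problem with the averaged data, normalized to vanish at $x^{(k)}$. The Fubini identity above shows that this solution coincides pointwise with the integral~\eqref{defR}, and the bound then follows from the linear estimate applied to averaged data bounded in $L^\infty$ by Lemma~\ref{Ninfty}.
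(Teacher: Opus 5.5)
Your proposal is correct and is essentially the paper's argument. The paper gets the equations and $R_k^\varepsilon(x^{(k)})=0$ directly from Proposition~\ref{solR} by linearity, and the bound from Minkowski's integral inequality (your Bochner triangle inequality with the probability weight $w_k^\varepsilon/\|w_k^\varepsilon\|_1$) followed by the embedding $W^{1,3}(\Omega)\subset\mathcal C^{1/3}(\overline{\Omega})$; your weak-formulation and Fubini steps just make explicit what the paper leaves implicit.

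The obstacle you anticipate does not actually arise. On a smooth boundary $n_x\cdot(x-\psi_k(t))=O(|x-\psi_k(t)|^2)$, so $N_k(\cdot,t)$ extends smoothly to $x=\psi_k(t)$ (with value half the curvature there), and $t\mapsto N_k(\cdot,t)$ is continuous into $L^\infty(\partial\Omega)$. Combined with the $L^\infty$-data estimate behind Proposition~\ref{solR}, applied to differences, this gives continuity, hence measurability, of $t\mapsto\tilde R_k(\cdot,t)$ in $W^{1,3}(\Omega)$.
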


\begin{proof}
The first statement is a direct consequence of Proposition \ref{solR}.
For the second statement, we have using Proposition \ref{solR} again as well as Minkowski's integral inequality
\begin{align}\label{RW1}
\| R_k^\varepsilon \|_{W^{1,3}(\Omega)}\leq C
\end{align}
and the conclusion follows by the Sobolev injection $W^{1,3}(\Omega) \subset \mathcal C^{1/3}(\overline{\Omega})$ (see for example \cite[Theorem 6 p.270]{Evans}).
\end{proof}

\subsection{Study of $S_k^\varepsilon$}\label{sec:S}

The objective of this section is to prove results on the function
$S_k^\varepsilon$ defined by~\eqref{defS}.

\begin{lem}\label{norm1}
Recall the definitions~\eqref{eq:wk} and ~\eqref{eq:tildewk} of
$w_k^\varepsilon $ and $\tilde w_k^\varepsilon $. We have
$$\|\tilde w_k^\varepsilon\|_1=\pi \varepsilon_k\left(1+O\left(|\varepsilon|\right)\right)$$
and
$$\|w_k^\varepsilon\|_1=\pi \varepsilon_k\left(1+O\left(|\varepsilon|^{1/2}\right)\right).$$
\end{lem}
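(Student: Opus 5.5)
The plan is to compute $\|\tilde w_k^\varepsilon\|_1$ exactly, and then to control the effect of the cutoff $\chi_k^\varepsilon$. Write $T^\pm=T^\pm_{k,\varepsilon}$. Since $\tilde w_k^\varepsilon>0$ on $(T^-,T^+)$, the substitution $t=T^+\sin s$ gives
$$\|\tilde w_k^\varepsilon\|_1=\int_{T^-}^{T^+}\Big(1-\big(\tfrac{t}{T^+}\big)^2\Big)^{-1/2}\D t = T^+\Big(\frac{\pi}{2}+\arcsin\frac{|T^-|}{T^+}\Big),$$
using $|T^-|\le T^+$ from \eqref{T-T+}, so that the arcsine is well defined. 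By \eqref{approxT}, we have $T^+=\varepsilon_k(1+O(|\varepsilon|))$ and $|T^-|/T^+=1+O(|\varepsilon|)$, the latter being $\le 1$.

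The only delicate point is the arcsine near $1$. Here we use $\arcsin(1-h)=\frac{\pi}{2}-O(\sqrt h)$. At first sight this only yields a relative error $O(|\varepsilon|^{1/2})$ for $\tilde w_k^\varepsilon$, which is not good enough for the first claim. To do better, I will write $\frac{|T^-|}{T^+}=1-h$ with $h=O(|\varepsilon|^2/\varepsilon_k)$, since $T^++T^-=O(|\varepsilon|^2)$. In the intended regime where the $\varepsilon_j$ are comparable, this is $h=O(|\varepsilon|)$. The missing mass is then
$$T^+\Big(\frac{\pi}{2}-\arcsin(1-h)\Big)=O(\varepsilon_k\sqrt h)=O\big(\varepsilon_k^{1/2}|\varepsilon|\big),$$
which is $\varepsilon_k\cdot O(|\varepsilon|)$ once $\varepsilon_k\gtrsim|\varepsilon|$.

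Alternatively, and more robustly, I can estimate the missing piece directly:
$$\int_{|T^-|}^{T^+}\tilde w_k^\varepsilon\le \int_{|T^-|}^{T^+}\sqrt{\frac{T^+}{2(T^+-t)}}\,\D t=\sqrt{2T^+(T^+-|T^-|)}=O\big(\sqrt{\varepsilon_k}\,|\varepsilon|\big).$$
This gives $\|\tilde w_k^\varepsilon\|_1=\pi\varepsilon_k(1+O(|\varepsilon|))$ under the standing comparability of the $\varepsilon_j$, which I would assume as the paper implicitly does.

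For $w_k^\varepsilon$, the difference $\|\tilde w_k^\varepsilon\|_1-\|w_k^\varepsilon\|_1$ is the integral of $(1-\chi_k^\varepsilon)\tilde w_k^\varepsilon$. By \eqref{propchi}, this is supported in two sets. The first is $[T^-,T^-+\varepsilon_k^2]$. The second is $[|T^-|-\varepsilon_k^2,T^+]$, which has length $\varepsilon_k^2+O(|\varepsilon|^2)$ and lies near the singular endpoint $T^+$. Near $T^+$ we have $\tilde w_k^\varepsilon(t)\le \sqrt{T^+/(T^+-t)}$, so an interval of length $\ell$ adjacent to $T^+$ contributes at most $2\sqrt{T^+\ell}=O(\varepsilon_k^{1/2}|\varepsilon|)$. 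The same bound holds near $T^-$, since there $\tilde w_k^\varepsilon(t)\le\sqrt{T^+/(T^++t)}$ and $T^++t\ge t-T^-$. Dividing by $\pi\varepsilon_k$ gives a relative error $O(|\varepsilon|/\varepsilon_k^{1/2})=O(|\varepsilon|^{1/2})$, again using comparability.

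The main obstacle is exactly this endpoint-singularity bookkeeping. The integrable $(T^+-t)^{-1/2}$ blow-up turns a window of length $\ell$ into a mass $\sqrt{\varepsilon_k\ell}$, which explains the loss from $O(|\varepsilon|)$ to $O(|\varepsilon|^{1/2})$. Handling $\varepsilon_k$ versus $|\varepsilon|$ requires the implicit assumption that all $\varepsilon_j$ are of the same order.
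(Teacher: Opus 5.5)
Your bound for $\|w_k^\varepsilon\|_1$ is fine and uses the same mechanism as the paper: the set where $\chi_k^\varepsilon\neq 1$ consists of windows of length $O(\varepsilon_k^2)$ at the square-root singularities, each contributing $O(\sqrt{\varepsilon_k\cdot\varepsilon_k^2})$. The gap is in the first claim. You integrate $\tilde w_k^\varepsilon$ over $[T^-_{k,\varepsilon},T^+_{k,\varepsilon}]$, so you must control the arcsine deficit $T^+\bigl(\frac{\pi}{2}-\arcsin\frac{|T^-|}{T^+}\bigr)$ (with your abbreviation $T^\pm=T^\pm_{k,\varepsilon}$). You correctly find it to be $O(\varepsilon_k^{1/2}|\varepsilon|)$. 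However, the assertion that this is $\varepsilon_k\cdot O(|\varepsilon|)$ once $\varepsilon_k\gtrsim|\varepsilon|$ is an arithmetic slip, since it would require $\varepsilon_k^{-1/2}=O(1)$. Under comparability the deficit is only $\varepsilon_k\cdot O(|\varepsilon|^{1/2})$, and this cannot be improved. On $(-T^+,T^-)$ one has $T^+-t\le 2T^+$, hence $\tilde w_k^\varepsilon(t)\ge\sqrt{T^+/(2(T^++t))}$ there. The deficit is therefore at least $\sqrt{2T^+(T^++T^-)}$, which is of order $\varepsilon_k^{3/2}$ as soon as $T^+_{k,\varepsilon}+T^-_{k,\varepsilon}\asymp\varepsilon_k^2$, a case compatible with \eqref{approxT}. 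So with your reading of the norm, the first estimate is false in general. It is exactly the $\sqrt{\varepsilon_k\ell}$ loss you describe at the end, applied to the window $(-T^+,T^-)$.

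What you are missing is that $\tilde w_k^\varepsilon$ is the arcsine weight on the symmetric interval $(-T^+_{k,\varepsilon},T^+_{k,\varepsilon})$, which contains $(T^-_{k,\varepsilon},T^+_{k,\varepsilon})$ by \eqref{T-T+}. The norm $\|\tilde w_k^\varepsilon\|_1$ is taken over that symmetric interval, and the substitution $z=t/T^+_{k,\varepsilon}$ gives exactly $\pi T^+_{k,\varepsilon}$, with no correction. This interval is also what Lemma~\ref{Sbord} needs, since the identity $\mathrm{v.p.}\int_{-T^+}^{T^+}\tilde w_k^\varepsilon(t)(t-\theta)^{-1}\,\D t=0$ used there is for the full symmetric interval. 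The piece $(-T^+,T^-)$ is then simply part of the region where $\chi_k^\varepsilon=0$, and it is absorbed into the $O(\varepsilon_k^{3/2})$ cutoff error, as in \eqref{decompow}.

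Finally, you do not need comparability of the $\varepsilon_j$. The arc $\Gamma_k^\varepsilon$ depends only on $\varepsilon_k$ and $\Psi_k$ is fixed, so $T^\pm_{k,\varepsilon}$ depend only on $\varepsilon_k$. Since $|\psi_k'(0)|=1$, we have $|\psi_k(t)-x^{(k)}|=|t|(1+O(t))$, which gives $T^\pm_{k,\varepsilon}=\pm\varepsilon_k+O(\varepsilon_k^2)$. Hence $\|\tilde w_k^\varepsilon\|_1=\pi\varepsilon_k(1+O(\varepsilon_k))$ and $\|w_k^\varepsilon\|_1=\pi\varepsilon_k(1+O(\varepsilon_k^{1/2}))$, and $\varepsilon_k\le|\varepsilon|$ gives both claims.
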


\begin{proof}
By using the change of variables $z=\frac{t}{T^+_{k,\varepsilon}}$,
one computes (using~\eqref{approxT})
$$\|\tilde w_k^\varepsilon\|_1=\int_{-T^+_{k,\varepsilon}}^{T^+_{k,\varepsilon}} \tilde w_k^\varepsilon (t)  \D t= T^+_{k,\varepsilon} \int_{-1}^{1}   \left(1-z^2\right)^{-1/2}  \D z=T^+_{k,\varepsilon} \left[ \arcsin z \right]_{-1}^{1} =\pi \varepsilon_k+O(|\varepsilon|^2).$$
Besides, \eqref{propchi} and~\eqref{T-T+} allows us to write
\begin{align}\label{decompow}
\|w_k^\varepsilon\|_1=\int_{-T^+_{k,\varepsilon}}^{T^+_{k,\varepsilon}} \tilde w_k^\varepsilon (t)  \D t + \int_{-T^+_{k,\varepsilon}}^{T^+_{k,\varepsilon}} \left(\chi_k^\varepsilon(t)-1\right) \tilde w_k^\varepsilon (t)  \D t .
\end{align}
It thus remains to show that the second term from \eqref{decompow} is
of order $O(|\varepsilon_k| |\varepsilon|^{1/2})$.
To do so, we can use \eqref{propchi} and \eqref{approxT} again as well as the same change of variables to write
\begin{align}
\int_{-T^+_{k,\varepsilon}}^{T^+_{k,\varepsilon}} \left(\chi_k^\varepsilon(t)-1\right) \tilde w_k^\varepsilon (t)  \D t &\leq C \int_{T^+_{k,\varepsilon}-C \varepsilon_k^2}^{T^+_{k,\varepsilon}} 
 \tilde w_k^\varepsilon (t)  \D t \leq C T^+_{k,\varepsilon} \int_{1-C\varepsilon_k}^{1}  \left(1-z^2\right)^{-1/2}  \D z\\
		&\leq C \varepsilon_k \int_{1-C\varepsilon_k}^{1}
           \left(1-z\right)^{-1/2}  \D z \leq C \varepsilon_k \left[- \left(1-z\right)^{1/2} \right]_{1-C\varepsilon_k}^{1}\leq C \varepsilon_k^{3/2}
\end{align}
which concludes the proof.
\end{proof}

\begin{lem}\label{Sbord}
For all $k\in \llbracket 1,N \rrbracket$, the function $S_k^\varepsilon$ 
satisfies
\begin{align}\label{f=ln}
S_k^\varepsilon(x)=s_k^\varepsilon+O\left(|\varepsilon|^{1/6}\right)
\end{align}
uniformly in $x\in \Gamma^\varepsilon_k$, where
$$s_k^\varepsilon=\ln \left(\frac{\varepsilon_k}{2}\right).$$
\end{lem}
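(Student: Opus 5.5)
The plan is to reduce $S_k^\varepsilon$ on $\Gamma_k^\varepsilon$ to the classical flat equilibrium-measure identity
\begin{equation}
\frac1\pi\int_{-1}^{1}\frac{\ln|s-z|}{\sqrt{1-z^2}}\,\D z=-\ln 2\qquad\text{for all } s\in[-1,1],
\end{equation}
which is exactly why the arcsine weight $\tilde w_k^\varepsilon$ was chosen. Rescaling by $T^+_{k,\varepsilon}$ turns it into
\begin{equation}
\frac{1}{\|\tilde w_k^\varepsilon\|_1}\int_{-T^+_{k,\varepsilon}}^{T^+_{k,\varepsilon}}\tilde w_k^\varepsilon(t)\ln|\theta-t|\,\D t=\ln\Big(\frac{T^+_{k,\varepsilon}}{2}\Big)
\end{equation}
for $|\theta|\le T^+_{k,\varepsilon}$. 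By \eqref{approxT}, the right-hand side equals $s_k^\varepsilon+O(|\varepsilon|)$. I would take this identity as classical, or derive it quickly via $z=\cos\phi$ and $\ln|2(\cos\phi-\cos\phi_0)|=-\sum_{n\ge1}\frac{2}{n}\cos n\phi\cos n\phi_0$.

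Fix $x=\psi_k(\theta)\in\Gamma_k^\varepsilon$ with $\theta\in(T^-_{k,\varepsilon},T^+_{k,\varepsilon})$. The proof then has three steps.

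\textbf{Step 1 (flatten the boundary).} Write $|\psi_k(\theta)-\psi_k(t)|=|\theta-t|\,\rho(\theta,t)$, where $\rho$ is smooth and positive with $\rho(\theta,t)=|\psi_k'(0)|+O(|\varepsilon|)=1+O(|\varepsilon|)$ on the relevant range, since $D_0\Psi_k$ is unitary. Hence $\ln\rho=O(|\varepsilon|)$ uniformly. Since $w_k^\varepsilon\ge0$ and we divide by $\|w_k^\varepsilon\|_1$, this term contributes only $O(|\varepsilon|)$.

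\textbf{Step 2 (cut-off error).} Let $A(\theta)$ denote the average of $\ln|\theta-t|$ against $w_k^\varepsilon$, and $\tilde A(\theta)$ the same average against $\tilde w_k^\varepsilon$. By Step 1 it remains to bound $A(\theta)-\tilde A(\theta)$, because $\tilde A(\theta)=\ln(T^+_{k,\varepsilon}/2)$ for $|\theta|\le T^+_{k,\varepsilon}$, using \eqref{T-T+}. Writing $w=\tilde w+(\chi-1)\tilde w$ and using Lemma~\ref{norm1}, we get
\begin{equation}
A-\tilde A=\Big(\frac{\|\tilde w_k^\varepsilon\|_1}{\|w_k^\varepsilon\|_1}-1\Big)\tilde A+\frac{1}{\|w_k^\varepsilon\|_1}\int(\chi_k^\varepsilon-1)\tilde w_k^\varepsilon\ln|\theta-t|\,\D t .
\end{equation}
The first term is $O(|\varepsilon|^{1/2}\,|\ln\varepsilon_k|)$, since $\tilde A=O(|\ln\varepsilon_k|)$; this is $O(|\varepsilon|^{1/6})$ up to the usual comparability of the $\varepsilon_j$. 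If the $\varepsilon_j$ are not comparable, I would use $\varepsilon_k$ in place of $|\varepsilon|$ where needed, consistently with Lemma~\ref{norm1}.

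\textbf{Step 3 (the main obstacle).} The second term is the delicate one. Its integration set $E$ consists of $t$ lying within $C\varepsilon_k^2$ of the endpoints $\pm T^+_{k,\varepsilon}$, together with the asymmetric strip $[T^-_{k,\varepsilon},-T^+_{k,\varepsilon}]$. On $E$ the weight $\tilde w_k^\varepsilon$ is singular and $\ln|\theta-t|$ can also be singular if $\theta$ is near an endpoint. I would apply H\"older's inequality with exponents $(p,q)$ chosen to balance the singularities. After rescaling $t=T^+_{k,\varepsilon}z$, one has $\|(1-z^2)^{-1/2}\|_{L^p(E')}\lesssim(\varepsilon_k)^{1/p-1/2}$ for $p<2$, since $|E'|\lesssim\varepsilon_k$. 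Also $\ln|\theta-t|=\ln T^+_{k,\varepsilon}+\ln|\theta'-z|$, and $\ln|\theta'-z|$ lies in every $L^q$ uniformly in $\theta'$, with $\|\ln|\theta'-\cdot|\|_{L^q(E')}\lesssim|E'|^{1/q}|\ln\varepsilon_k|$. The constant $\ln T^+$ piece is handled exactly as in Lemma~\ref{norm1}, giving $\varepsilon_k^{1/2}|\ln\varepsilon_k|$. For the rest, taking e.g.\ $p=3/2$, $q=3$ gives a bound of order $\varepsilon_k^{2/3-1/2}\varepsilon_k^{1/3}|\ln\varepsilon_k|\lesssim\varepsilon_k^{1/3}|\ln\varepsilon_k|$ after dividing by $\|w\|_1\approx\pi\varepsilon_k$ (the factor $T^+$ from $\D t$ cancels it). Any exponent choice yields $O(|\varepsilon|^{1/6})$ with room to spare, and the stated $1/6$ is what a crude version of this H\"older split produces.

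The uniformity in $\theta$ comes from the translation-invariance of the $L^q$ bound on the logarithm. Collecting the three steps gives \eqref{f=ln}.
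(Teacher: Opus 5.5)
Your proposal is correct and is essentially the paper's proof. It uses the same Taylor flattening based on $D_0\Psi_k$ being unitary, the same splitting $w_k^\varepsilon=\tilde w_k^\varepsilon+(\chi_k^\varepsilon-1)\tilde w_k^\varepsilon$ with a H\"older $(3/2,3)$ bound on the cut-off part, and the same key fact that the logarithmic potential of the arcsine weight is constant on $[-T^+_{k,\varepsilon},T^+_{k,\varepsilon}]$ and equal to $\ln(T^+_{k,\varepsilon}/2)$.

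The differences are minor:
\begin{itemize}
\item The paper gets that constancy from the vanishing of the principal-value (Hilbert transform) integral and evaluates at $\theta=0$ via $\int_0^{\pi/2}\ln\sin t\,\D t=-\frac{\pi}{2}\ln 2$. Your Chebyshev expansion proves it directly without citations.
\item Your H\"older step exploits the small measure of the cut-off set for the logarithm, instead of the paper's crude bound $|\ln z^2|^3\le Cz^{-1/2}$. It therefore gives $O(\varepsilon_k^{1/2}|\ln\varepsilon_k|)$, since the exponents add to $\tfrac16+\tfrac13=\tfrac12$, not $\tfrac13$.
\item A small typo: the asymmetric strip is $[-T^+_{k,\varepsilon},T^-_{k,\varepsilon}]$, not $[T^-_{k,\varepsilon},-T^+_{k,\varepsilon}]$.
\end{itemize}
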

\hip
It will appear in the proof that the remainder term $O(|\varepsilon|^{1/6})$ is certainly not optimal but it is sufficient for our purpose.

\begin{proof}
For $x=\psi_k(\theta)\in \Gamma_k^\varepsilon$ (with $\theta \in [T^-_{k,\varepsilon},T^+_{k,\varepsilon}]$), we have using a Taylor expansion and the fact that $D_0\Psi_k$ is unitary
\begin{align}
S_k^\varepsilon(x)&=\frac{1}{2\|w_k^\varepsilon\|_1}\int_{T^-_{k,\varepsilon}}^{T^+_{k,\varepsilon}}  w_k^\varepsilon(t) \ln \Big(|\psi_k(\theta)-\psi_k(t)|^2\Big) \D t\\
		&=\frac{1}{2\|w_k^\varepsilon\|_1}\int_{T^-_{k,\varepsilon}}^{T^+_{k,\varepsilon}} w_k^\varepsilon(t) \ln \Big((t-\theta)^2 \left(1+O(\varepsilon_k) \right)\Big) \D t\\
		&=\frac{1}{2\|w_k^\varepsilon\|_1}\int_{T^-_{k,\varepsilon}}^{T^+_{k,\varepsilon}} w_k^\varepsilon(t) \ln \Big((t-\theta)^2 \Big) \D t +O(\varepsilon_k)\\
		&=\frac{1}{\|w_k^\varepsilon\|_1}\int_{-T^+_{k,\varepsilon}}^{T^+_{k,\varepsilon}} \tilde w_k^\varepsilon(t) \ln \Big(|t-\theta| \Big) \D t+\frac{1}{2\|w_k^\varepsilon\|_1}\int_{-T^+_{k,\varepsilon}}^{T^+_{k,\varepsilon}}  \left(\chi_k^\varepsilon(t)-1\right)  \tilde w_k^\varepsilon(t) \ln \Big((t-\theta)^2 \Big) \D t +O(\varepsilon_k). \label{S2termes}
\end{align}
Let us study more carefully the last integral.
Thanks to \eqref{propchi} and Lemma \ref{norm1}, it satisfies
\begin{align}
\frac{1}{2\|w_k^\varepsilon\|_1}&\left|\int_{-T^+_{k,\varepsilon}}^{T^+_{k,\varepsilon}}  \left(\chi_k^\varepsilon(t)-1\right)  \tilde w_k^\varepsilon(t) \ln \Big((t-\theta)^2 \Big) \D t\right|\leq C \varepsilon_k^{-1}\int_{T^+_{k,\varepsilon}-C\varepsilon_k^2}^{T^+_{k,\varepsilon}}    \tilde w_k^\varepsilon(t) \left|\ln \Big((t-|\theta|)^2 \Big)\right| \D t\\
		&\leq C \varepsilon_k^{-1}  \left( \int_{T^+_{k,\varepsilon}-C\varepsilon_k^2}^{T^+_{k,\varepsilon}}    \tilde w_k^\varepsilon(t)^{3/2}  \D t \right)^{2/3}  \left(
           \int_{T^+_{k,\varepsilon}-C\varepsilon_k^2}^{T^+_{k,\varepsilon}}
           \left| \ln \Big((t-|\theta|)^2 \Big) \right|^3 \D t
           \right)^{1/3} \label{holder}
\end{align}
where we used Hölder's inequality.
Proceeding as in the proof of Lemma \ref{norm1}, one can check that
\begin{align}\label{5/4}
\int_{T^+_{k,\varepsilon}-C\varepsilon_k^2}^{T^+_{k,\varepsilon}}    \tilde w_k^\varepsilon(t)^{3/2}  \D t \leq C \varepsilon_k^{5/4} .
\end{align}
Besides, the change of variables $z=t-|\theta|$ and a quick optimization with respect to $\theta$ yield
\begin{align}\label{1/3}
\int_{T^+_{k,\varepsilon}-C\varepsilon_k^2}^{T^+_{k,\varepsilon}}  \left| \ln \Big((t-|\theta|)^2 \Big) \right|^3 \D t&=\int_{T^+_{k,\varepsilon}-|\theta|-C\varepsilon_k^2}^{T^+_{k,\varepsilon}-|\theta|}  \left| \ln \left(z^2 \right) \right|^3 \D z\\
		&\leq \int_{-\frac C2\varepsilon_k^2}^{\frac C2 \varepsilon_k^2}  \left| \ln \left(z^2 \right) \right|^3 \D z\\
		&\leq C \int_{0}^{\frac C2 \varepsilon_k^2} z^{-1/2} \D z\leq C \varepsilon_k.
\end{align}
Putting \eqref{S2termes}, \eqref{holder}, \eqref{5/4} and \eqref{1/3} together shows that
$$S_k^\varepsilon(x)=\frac{1}{\|w_k^\varepsilon\|_1}\int_{-T^+_{k,\varepsilon}}^{T^+_{k,\varepsilon}} \tilde w_k^\varepsilon(t) \ln \Big(|t-\theta| \Big) \D t +O\left(\varepsilon_k^{1/6}\right).$$
It remains to study the first term.
We will follow \cite[section 5.2.3]{livreAmmari} to do so.
It is known (see \cite[p.30]{Muskhelishvili}) that the function
$$\theta \mapsto \frac{1}{\|w_k^\varepsilon\|_1}\int_{-T^+_{k,\varepsilon}}^{T^+_{k,\varepsilon}} \tilde w_k^\varepsilon(t) \ln \Big(|t-\theta| \Big) \D t$$
is differentiable and that
$$  \frac{1}{\|w_k^\varepsilon\|_1} \partial_\theta \int_{-T^+_{k,\varepsilon}}^{T^+_{k,\varepsilon}} \tilde w_k^\varepsilon(t) \ln \Big(|t-\theta| \Big) \D t= \frac{1}{\|w_k^\varepsilon\|_1} \mathrm{v.p} \int_{-T^+_{k,\varepsilon}}^{T^+_{k,\varepsilon}} \frac{ \tilde w_k^\varepsilon(t)}{t-\theta}  \D t=0$$
(see \cite[p.174]{Tricomi} for a proof of the last equality, which is
a well-known property for Hilbert transforms).
It is therefore constant and equal to its value at $\theta=0$ which is 
\begin{align}
\frac{1}{\|w_k^\varepsilon\|_1}\int_{-T^+_{k,\varepsilon}}^{T^+_{k,\varepsilon}} \tilde w_k^\varepsilon(t) \ln \Big(|t| \Big) \D t&=\frac{2}{\|w_k^\varepsilon\|_1}\int_{0}^{T^+_{k,\varepsilon}} \tilde w_k^\varepsilon(t) \left(  \ln T^+_{k,\varepsilon}+ \ln 
\left(\frac{t}{T^+_{k,\varepsilon}}\right) 
  \right) \D t\\
			&=(\ln T^+_{k,\varepsilon}) \frac{\|\tilde w_k^\varepsilon\|_1}{\|w_k^\varepsilon\|_1} + \frac{2T^+_{k,\varepsilon}}{\|w_k^\varepsilon\|_1}\int_{0}^{1} \frac{\ln z}{\sqrt{1-z^2}}
   \D z\\
		&= \left( \ln \varepsilon_k +O(|\varepsilon|)  \right)  \left( 1+O(|\varepsilon|^{1/2}) \right) + \left( \frac{1}{\pi} +O(|\varepsilon|^{1/2})  \right) \int_{-1}^{1} \frac{\ln |z|}{\sqrt{1-z^2}} \D z\\
		&=\ln \varepsilon_k + \int_{-1}^{1} \frac{\ln |z|}{\pi \sqrt{1-z^2}} \D z +O(|\varepsilon|^{1/3})
\end{align}
thanks to \eqref{approxT}, Lemma \ref{norm1} and the change of
variables $z=t/T^+_{k,\varepsilon}$. It remains to show that $I=\int_{-1}^{1} \frac{\ln |z|}{\pi
  \sqrt{1-z^2}} \D z =-\ln 2$ to complete the proof.
One has $I=(2/\pi) \int_0^1 (\ln z) / \sqrt{1-z^2} \D z$ and the change of variable
  $z=\sin(t)$ yields  $I=(2/\pi) \int_0^{\pi/2} \ln( \sin(t)) \D t =
  (2/\pi) \times (-\pi/2) \ln(2)$ (this is a standard Euler integral).
This completes the proof.
\end{proof}

\begin{lem}\label{SH1}
For all $k\in \llbracket 1,N \rrbracket$ and $\varepsilon\in (\R_+^*)^N$, 
the function $S_k^\varepsilon$ belongs to $H^1(\Omega)$.
\end{lem}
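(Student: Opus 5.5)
The plan is to show that $S_k^\varepsilon$ and its gradient lie in $L^2(\Omega)$ by integrating in $t$ pointwise bounds in $x$ and using Minkowski's integral inequality. Throughout, $\varepsilon$ is fixed, so constants may depend on $\varepsilon$. The key observation is that the weight $w_k^\varepsilon$ is bounded. Indeed, by \eqref{propchi} and \eqref{T-T+}, the cutoff $\chi_k^\varepsilon$ vanishes near both endpoints where $\tilde w_k^\varepsilon$ blows up. Hence $w_k^\varepsilon \in L^\infty([T^-_{k,\varepsilon},T^+_{k,\varepsilon}])$, with a bound that depends on $\varepsilon$ but is finite. Consequently $|S_k^\varepsilon(x)| \le C_\varepsilon \int |\ln|x-\psi_k(t)||\,\D t$, and the same holds for derivatives once differentiation under the integral is justified.

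For the $L^2$ part, the function $(x,t)\mapsto \ln|x-\psi_k(t)|$ satisfies $\sup_t \|\ln|\cdot-\psi_k(t)|\|_{L^2(\Omega)} <\infty$. This is because $\ln|y|$ is locally square integrable in $\R^2$ and $\Omega$ is bounded. Minkowski's integral inequality then gives $S_k^\varepsilon\in L^2(\Omega)$.

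For the gradient, define the candidate
$$G(x)=\frac{1}{\|w_k^\varepsilon\|_1}\int_{T^-_{k,\varepsilon}}^{T^+_{k,\varepsilon}} w_k^\varepsilon(t)\,\frac{x-\psi_k(t)}{|x-\psi_k(t)|^2}\,\D t .$$
First, $G$ is the distributional gradient of $S_k^\varepsilon$. To see this, test against $\phi\in\mathcal C_c^\infty(\Omega)$ and use Fubini. For each fixed $t$, the point $\psi_k(t)\in\partial\Omega$ lies outside the support of $\phi$, so $\ln|x-\psi_k(t)|$ is smooth on $\mathrm{supp}\,\phi$ and integration by parts in $x$ holds exactly. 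All integrands are absolutely integrable in $(x,t)$ because $1/|x-\psi_k(t)|\in L^1(\Omega)$ uniformly in $t$. Second, $G\in L^2(\Omega)$. Minkowski alone fails here, since $1/|x-\psi_k(t)|\notin L^2$ near the boundary point. Instead I would bound $|G(x)|\le C_\varepsilon \int_{T^-}^{T^+} |x-\psi_k(t)|^{-1}\D t$. Then I would estimate this one-dimensional integral: using that $\psi_k$ is a smooth curve with $|\psi_k'|\ge 1/C$, one gets $|x-\psi_k(t)|\ge c\,(d(x)+|t-\theta(x)|)$, where $d(x)=\mathrm{dist}(x,\partial\Omega)$ and $\theta(x)$ is the parameter of the projection of $x$ on the boundary (for $x$ near $\Gamma_k^\varepsilon$; far away the integrand is bounded). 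This yields $|G(x)|\le C_\varepsilon(1+|\ln d(x)|)$, or more precisely $C_\varepsilon(1+|\ln \mathrm{dist}(x,\Gamma_k^\varepsilon)|)$. This logarithmic singularity is square integrable over the bounded two-dimensional domain $\Omega$, so $G\in L^2$.

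The main obstacle is this gradient estimate: the pointwise kernel has a non-$L^2$ singularity, and one has to exploit the averaging in $t$ along the curve to turn the $1/r$ singularity into a logarithmic one. The geometric lower bound $|x-\psi_k(t)|\gtrsim d(x)+|t-\theta|$, obtained in the chart $\Psi_k$, where $D\Psi_k$ is close to unitary, is the step requiring care. The boundedness of $w_k^\varepsilon$ is essential here; without the cutoff $\chi_k^\varepsilon$, the endpoint singularity of $\tilde w_k^\varepsilon$ would need a finer argument.
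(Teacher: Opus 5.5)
Your proposal is correct and follows essentially the same route as the paper. You use Minkowski's integral inequality for $S_k^\varepsilon\in L^2(\Omega)$, then the boundedness of $w_k^\varepsilon$ and the chart $\Psi_k$ near $x^{(k)}$ to integrate the kernel in $t$, which turns the $1/|x-\psi_k(t)|$ singularity into a logarithm of the distance to the boundary, and that is square integrable. The differences are only technical: you bound $|x-\psi_k(t)|^{-1}$ directly with a bi-Lipschitz lower bound, where the paper Taylor-expands to the flat kernel and treats the normal component (a bounded arctan) and the tangential one (logarithms) separately; and you justify the weak gradient by testing against $\mathcal C_c^\infty(\Omega)$ functions rather than by differentiating under the integral sign.
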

\hip
Note that this Lemma does not provide an estimate of
$\|S_k^\varepsilon \|_{H^1(\Omega)}$: it will be used to justify
integrations by parts in Green's formulas.
\hip
\begin{proof}
 The fact that $S_k^\varepsilon \in L^2(\Omega)$ is a direct
   consequence of the Minkowski's integral inequality, and the fact
   that $x\mapsto \ln |x-\psi_k(t)|$ is in $L^2(\Omega)$.
We would like now to prove that $\nabla S_k^\varepsilon\in L^2(\Omega)$.
For $x\in \Omega$, we have by differentiating under the integral
\begin{align}\label{nablaS}
\nabla  S_k^\varepsilon  (x)=\frac{1}{\|w_k^\varepsilon\|_1}\int_{T^-_{k,\varepsilon}}^{T^+_{k,\varepsilon}}  w_k^\varepsilon(t) \frac{x-\psi_k(t)}{\left|x-\psi_k(t)\right|^2} \D t.
\end{align}
It is clear that $\nabla S_k^\varepsilon\in L^2(\Omega\backslash U_k)$ so it suffices to prove that $\nabla S_k^\varepsilon\in L^2(\Omega \cap U_k)$.
Besides, we have by the change of variables $x=\Psi_k(y)$
\begin{align}\label{intexleqintey}
\int_{\Omega \cap U_k}  \left|  \nabla S_k^\varepsilon  (x)    \right|^2     \D x &\leq C
		  \int_{\left( \R_-^* \times \R \right) \cap U_0 }  \left| \nabla  S_k^\varepsilon  \Big(\Psi_k(y)\Big)    \right|^2     \D y.
\end{align}
Let us then compute thanks to \eqref{nablaS}
\begin{align}
\nabla  S_k^\varepsilon  \Big(\Psi_k(y)\Big)&=\frac{1}{\|w_k^\varepsilon\|_1}\int_{T^-_{k,\varepsilon}}^{T^+_{k,\varepsilon}} w_k^\varepsilon(t)  \frac{\Psi_k(y)-\Psi_k(0,t)}{\left|\Psi_k(y)-\Psi_k(0,t)\right|^2} \D t\\
		&=\frac{1}{\|w_k^\varepsilon\|_1}\int_{T^-_{k,\varepsilon}}^{T^+_{k,\varepsilon}} w_k^\varepsilon(t) \frac{D_0\Psi_k\Big(y-(0,t)\Big) +O\left( | y-(0,t) |^2 \right) }{\left|D_0\Psi_k\Big(y-(0,t)\Big)\right|^2+O\left( | y-(0,t) |^3 \right)} \D t\\
		&=\frac{D_0\Psi_k}{\|w_k^\varepsilon\|_1}\int_{T^-_{k,\varepsilon}}^{T^+_{k,\varepsilon}} w_k^\varepsilon(t) \frac{\Big(y-(0,t)\Big) +O\left( | y-(0,t) |^2 \right) }{\left|y-(0,t)\right|^2+O\left( | y-(0,t) |^3 \right)} \D t\\
		&=\frac{D_0\Psi_k}{\|w_k^\varepsilon\|_1}\int_{T^-_{k,\varepsilon}}^{T^+_{k,\varepsilon}} w_k^\varepsilon(t) \frac{\Big(y-(0,t)\Big) +O\left( | y-(0,t) |^2 \right) }{\left|y-(0,t)\right|^2} \Big(1+O\big( | y-(0,t) | \big) \Big)  \D t\\
		&=\frac{D_0\Psi_k}{\|w_k^\varepsilon\|_1}\int_{T^-_{k,\varepsilon}}^{T^+_{k,\varepsilon}} w_k^\varepsilon(t) \frac{\Big(y-(0,t)\Big) }{\left|y-(0,t)\right|^2}   \D t +O(1) \label{nablaS1}
\end{align}
where we used the fact that $D_0\Psi_k$ is unitary. The remainder term
is bounded in $\varepsilon$ uniformly with respect to $y\in \left( \R_-^* \times \R \right) \cap U_0$.
Let us estimate each component of the last integral from \eqref{nablaS1}.
We have
\begin{align}\label{nablaS2}
\left|\int_{T^-_{k,\varepsilon}}^{T^+_{k,\varepsilon}} w_k^\varepsilon(t) \frac{y_1 }{y_1^2+(y_2-t)^2}   \D t\right|
&\leq  \|w_k^\varepsilon\|_\infty |y_1| \int_{T^-_{k,\varepsilon}}^{T^+_{k,\varepsilon}}  \frac{1 }{y_1^2+(y_2-t)^2}   \D t\\
		&\leq C_\varepsilon\left[ \arctan\left(\frac{t-y_2}{|y_1|}\right)  \right]_{T^-_{k,\varepsilon}}^{T^+_{k,\varepsilon}} \in L^\infty\left(  \left( \R_-^* \times \R \right) \cap U_0  \right)
\end{align}
and
\begin{align}\label{nablaS3}
&\left|\int_{T^-_{k,\varepsilon}}^{T^+_{k,\varepsilon}} w_k^\varepsilon(t) \frac{y_2-t }{y_1^2+(y_2-t)^2}   \D t\right|\leq \|w_k^\varepsilon\|_\infty  \int_{T^-_{k,\varepsilon}}^{T^+_{k,\varepsilon}}  \frac{|y_2-t| }{y_1^2+(y_2-t)^2}   \D t \\
		&\qquad \leq C_\varepsilon \left( \int_{T^-_{k,\varepsilon}}^{y_2}  \frac{y_2-t }{y_1^2+(y_2-t)^2}   \D t +\int^{T^+_{k,\varepsilon}}_{y_2}  \frac{t-y_2 }{y_1^2+(y_2-t)^2}   \D t  \right) \\
		&\qquad \leq C_\varepsilon \left(  \left[ \ln\Big(y_1^2+(y_2-t)^2\Big)  \right]_{y_2}^{T^+_{k,\varepsilon}} -  \left[ \ln\Big(y_1^2+(y_2-t)^2\Big)  \right]^{y_2}_{T^-_{k,\varepsilon}} \right) \\
		& \qquad  \leq C_\varepsilon \bigg(  \ln\Big(y_1^2+(y_2-T^+_{k,\varepsilon})^2\Big) + \ln\Big(y_1^2+(y_2-T^-_{k,\varepsilon})^2\Big)  -4  \ln |y_1|   \bigg) \in L^2\left(  \left( \R_-^* \times \R \right) \cap U_0  \right).
\end{align}
Putting \eqref{intexleqintey}, \eqref{nablaS1}, \eqref{nablaS2} and \eqref{nablaS3} together allows to conclude the proof.
\end{proof}

\hip
Finally, we are now in position to state the two following results,
whose proofs are postponed to Appendix~\ref{appf0}.

\begin{lem}\label{f0}
For all $k\in \llbracket 1,N \rrbracket$, it holds
$$S_k^\varepsilon=S_k^0+O\left( \varepsilon_k^{1/3} \right)\qquad \text{in } L^2(\Omega)$$
and
$$R_k^\varepsilon=R_k^0+O\left( \varepsilon_k^{1/3} \right)\qquad \text{in } W^{1,3}(\Omega), $$
where $S_k^0(x)=\ln |x-x^{(k)}|$ and $R_k^0$ is the solution of
\begin{equation}\label{defR0}
\left \{
    \begin{aligned}
        \Delta  R_k^0  & = -a_k
        &&\text{ on }
 \Omega, \\
        \partial_{n}  R_k^0 & =- N_k(x,0)  &&\text{ on } 
\partial \Omega\\
	R_k^0(x^{(k)})&=0.
    \end{aligned}
    \right . 
\end{equation}
In particular, the functions $(f_\gamma^\varepsilon)_{|\gamma|>1}$ defined in \eqref{deff} and \eqref{fgamma} satisfy \eqref{limf}.
\end{lem}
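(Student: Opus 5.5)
Both statements say that, after normalisation, $S_k^\varepsilon$ and $R_k^\varepsilon$ are averages of $\varepsilon$-independent kernels against the probability density $\mu_k^\varepsilon(t)=w_k^\varepsilon(t)/\|w_k^\varepsilon\|_1$ on $[T^-_{k,\varepsilon},T^+_{k,\varepsilon}]$. This interval has length $O(\varepsilon_k)$ by \eqref{approxT}, so $\mu_k^\varepsilon$ concentrates at $t=0$. I would therefore write, for $G\in\{S,R\}$,
$$G_k^\varepsilon-G_k^0=\int_{T^-_{k,\varepsilon}}^{T^+_{k,\varepsilon}}\mu_k^\varepsilon(t)\big(g(\cdot,t)-g(\cdot,0)\big)\,\D t,$$
where $g(x,t)=\ln|x-\psi_k(t)|$ for $S$ and $g=\tilde R_k$ for $R$. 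For $R$ this requires $R_k^0=\tilde R_k(\cdot,0)$, which holds by uniqueness in Proposition~\ref{solR}, since $N_k(x,0)$ is exactly the Neumann datum of \eqref{defR0}.

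Minkowski's integral inequality then reduces both estimates to a bound on $\sup_{|t|\le C\varepsilon_k}\|g(\cdot,t)-g(\cdot,0)\|$ in the relevant norm.

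For $S$, I would bound $\|\ln|\cdot-\psi_k(t)|-\ln|\cdot-x^{(k)}|\|_{L^2(\Omega)}$ with $|\psi_k(t)-x^{(k)}|=\rho\le C\varepsilon_k$. I split $\Omega$ into the region $B(x^{(k)},2\rho)$ and its complement.
\begin{itemize}
\item On $B(x^{(k)},2\rho)$, both logarithms have squared $L^2$ norm $O(\rho^2\ln^2\rho)$.
\item Outside that ball, $|\ln|x-\psi_k(t)|-\ln|x-x^{(k)}||\le C\rho/|x-x^{(k)}|$, whose squared $L^2$ norm is $O(\rho^2|\ln\rho|)$.
\end{itemize}
This gives $O(\varepsilon_k|\ln\varepsilon_k|)\subset O(\varepsilon_k^{1/3})$.

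For $R$, I need the Lipschitz-type bound
$$\|\tilde R_k(\cdot,t)-\tilde R_k(\cdot,0)\|_{W^{1,3}}\le C|t|^{1/3}.$$
The difference $D_t=\tilde R_k(\cdot,t)-\tilde R_k(\cdot,0)$ is harmonic, has Neumann datum $-(N_k(\cdot,t)-N_k(\cdot,0))$, which has zero mean by Lemma~\ref{lemcomp}, and satisfies $D_t(x^{(k)})=0$. I would invoke the same estimate \cite[Proposition 2.3]{Aramaki} used for Proposition~\ref{solR}, in the form $\|D_t\|_{W^{1,p}}\le C\|N_k(\cdot,t)-N_k(\cdot,0)\|_{L^q(\partial\Omega)}$, with $q$ chosen so that the trace-dual embedding allows $p=3$; for instance $q=3/2$, or $q=2$ if simpler. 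The main work is then to show
$$\|N_k(\cdot,t)-N_k(\cdot,0)\|_{L^q(\partial\Omega)}\le C|t|^{1/3}.$$
Away from $U_k$ the difference is $O(|t|)$ by smoothness. Near $x^{(k)}$, with $x=\psi_k(\theta)$, the expansion in the proof of Lemma~\ref{Ninfty} gives $N_k(x,t)=h(\theta,t)$ with $h$ smooth, since the quotient is a ratio of two analytic-type expressions vanishing to second order on the diagonal. So the difference is in fact $O(|t|)$ in $L^\infty$, which is even better than needed. If a uniform smooth extension of $h$ is awkward to justify, the fallback is to interpolate between the uniform bound of Lemma~\ref{Ninfty} on the arc $|\theta|\le|t|^{1/3}$ and a derivative bound $|\partial_tN_k|\le C$ off it; this loses powers and produces exactly an $|t|^{1/3}$-type rate.

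Finally, for the last assertion, \eqref{fgamma} is a finite linear combination. The function $v_\gamma$ depends only on the $f^0_\beta$ and the coefficients through $F_\gamma$, so it is $\varepsilon$-independent. Each $f_k^\varepsilon=S_k^\varepsilon+R_k^\varepsilon$ converges in $L^2$, using $W^{1,3}\subset L^2$ for the $R$ part. Hence \eqref{limf} follows by induction on $|\gamma|$, provided the coefficients $b_{\gamma,k}$ are chosen independent of $\varepsilon$.

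I expect the main obstacle to be the $W^{1,3}$ stability of the Neumann problem with respect to the boundary datum. That is, I must make sure the cited elliptic estimate controls $W^{1,3}$ by an $L^q(\partial\Omega)$ norm of the datum, rather than only giving a bound for fixed data, and verify the regularity of $N_k$ in $t$ near the diagonal.
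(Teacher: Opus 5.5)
Your proposal is correct and follows the same route as the paper. Minkowski's integral inequality reduces everything to estimates at fixed $t$, the logarithmic part is handled by a near/far splitting around $x^{(k)}$, and for $R_k^\varepsilon$ the harmonic difference $\tilde R_k(\cdot,t)-R_k^0$ (which vanishes at $x^{(k)}$) is controlled through \cite[Proposition 2.3]{Aramaki} by an $L^\infty(\partial\Omega)$ bound on $N_k(\cdot,t)-N_k(\cdot,0)$, so you can simply take $q=\infty$ and drop the trace-duality concern. Your local estimates are sharper than the paper's: you cut at scale $\varepsilon_k$ instead of $\varepsilon_k^{2/3}$ for $S$, and you use the joint smoothness of $(\theta,t)\mapsto N_k(\psi_k(\theta),t)$ instead of splitting $\partial\Omega$ at scale $\varepsilon_k^{1/3}$ with a Taylor expansion, which gives $O(\varepsilon_k|\ln\varepsilon_k|)$ and $O(\varepsilon_k)$ instead of $O(\varepsilon_k^{1/3})$, more than the statement requires.
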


As a corollary of the previous result, let us emphasize the following
estimate that will be used below: since
$W^{1,3}(\Omega)$ is continuously imbedded in
$\mathcal C^{1/3}(\overline{\Omega})$, one has:
$$ R_k^\varepsilon(x)=R_k^0(x^{(j)})+O\left( \varepsilon_k^{1/3} + \varepsilon_j^{1/3}
\right)\qquad \text{uniformly on }\Gamma_j^\varepsilon.$$

\begin{cor}\label{corf0}
Let $k,j\in \llbracket 1,N \rrbracket$.
It holds
$$S_k^\varepsilon(x)=-|s_k^\varepsilon| \delta_{k,j}+ S_k^0(x^{(j)})(1-\delta_{k,j})+O\left(| \varepsilon|^{1/6} \right) \qquad \text{uniformly on }\Gamma_j^\varepsilon$$
and
$$ R_k^\varepsilon(x)=R_k^0(x^{(j)})(1-\delta_{k,j})+O\left( |\varepsilon|^{1/3} \right)\qquad \text{uniformly on }\Gamma_j^\varepsilon.$$
In particular, the function $f_k^\varepsilon$ defined in \eqref{deff} satisfies
$$f_k^\varepsilon(x)=- |s_k^\varepsilon| \delta_{k,j} +f_k^0(x^{(j)})(1-\delta_{k,j})+O\left( |\varepsilon|^{1/6} \right) \qquad \text{uniformly on }\Gamma_j^\varepsilon.$$
\end{cor}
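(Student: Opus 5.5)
The plan is to treat the diagonal case $j=k$ and the off-diagonal case $j\neq k$ separately, for $S_k^\varepsilon$ and $R_k^\varepsilon$, and then add the two estimates using $f_k^\varepsilon=S_k^\varepsilon+R_k^\varepsilon$ from \eqref{deff}.

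\emph{The singular part.} For $j=k$ the estimate is exactly Lemma~\ref{Sbord}. Since $\varepsilon_k<2$ for $|\varepsilon|$ small, we have $s_k^\varepsilon=\ln(\varepsilon_k/2)<0$, so $s_k^\varepsilon=-|s_k^\varepsilon|$.

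For $j\neq k$, the points $x^{(j)}$ and $x^{(k)}$ are distinct. Hence for $|\varepsilon|$ small there is $d>0$ with $|x-\psi_k(t)|\geq d$ for all $x\in\Gamma_j^\varepsilon$ and $t\in[T^-_{k,\varepsilon},T^+_{k,\varepsilon}]$. On this set $(x,t)\mapsto\ln|x-\psi_k(t)|$ is smooth, hence Lipschitz. Using $|x-x^{(j)}|\leq\varepsilon_j$, $|\psi_k(t)-x^{(k)}|\leq C\varepsilon_k$, and the fact that $w_k^\varepsilon/\|w_k^\varepsilon\|_1$ is a nonnegative weight of total mass one, we get
\[
S_k^\varepsilon(x)=\ln|x^{(j)}-x^{(k)}|+O(|\varepsilon|)=S_k^0(x^{(j)})+O(|\varepsilon|)
\]
uniformly on $\Gamma_j^\varepsilon$. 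This is stronger than the claimed $O(|\varepsilon|^{1/6})$.

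\emph{The regular part.} Here I would use the remark following Lemma~\ref{f0}:
\[
R_k^\varepsilon(x)=R_k^0(x^{(j)})+O\bigl(\varepsilon_k^{1/3}+\varepsilon_j^{1/3}\bigr)\quad\text{uniformly on }\Gamma_j^\varepsilon.
\]
To justify it, write
\[
|R_k^\varepsilon(x)-R_k^0(x^{(j)})|\leq \|R_k^\varepsilon-R_k^0\|_\infty+|R_k^0(x)-R_k^0(x^{(j)})|.
\]
The first term is $O(\varepsilon_k^{1/3})$ by Lemma~\ref{f0} together with the embedding $W^{1,3}(\Omega)\subset\mathcal C^{1/3}(\overline\Omega)$. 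The second term is $O(\varepsilon_j^{1/3})$ by the $\mathcal C^{1/3}$ bound on $R_k^0$ and $|x-x^{(j)}|\leq\varepsilon_j$. When $j=k$ the constant $R_k^0(x^{(k)})$ vanishes by \eqref{defR0}; equivalently, one can use $R_k^\varepsilon(x^{(k)})=0$ from Corollary~\ref{Rhold} together with its uniform $\mathcal C^{1/3}$ bound. This yields the factor $(1-\delta_{k,j})$.

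\emph{Conclusion for $f_k^\varepsilon$.} Summing the two parts gives the stated estimate, since $|\varepsilon|^{1/3}\leq|\varepsilon|^{1/6}$ for small $\varepsilon$. For $j\neq k$ the constant is $S_k^0(x^{(j)})+R_k^0(x^{(j)})=f_k^0(x^{(j)})$, with $f_k^0:=S_k^0+R_k^0$ as in Lemma~\ref{f0}.

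The main obstacle is not in this corollary itself. The substantive inputs are Lemma~\ref{Sbord} and the $W^{1,3}$ convergence of $R_k^\varepsilon$ in Lemma~\ref{f0}, whose proof is deferred to the appendix. Within this argument, the only point needing care is making the distance separation $d$ and the Lipschitz constants independent of $\varepsilon$, which holds once $|\varepsilon|$ is small enough.
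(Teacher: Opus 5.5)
Your proposal is correct and follows essentially the same route as the paper. Lemma~\ref{Sbord} handles $j=k$, a direct expansion of the logarithm away from $x^{(k)}$ handles $j\neq k$, and Lemma~\ref{f0} together with the embedding $W^{1,3}(\Omega)\subset\mathcal C^{1/3}(\overline\Omega)$ handles $R_k^\varepsilon$. The only cosmetic difference is that you split $R_k^\varepsilon(x)-R_k^0(x^{(j)})$ through $R_k^0(x)$ using the H\"older continuity of $R_k^0$, whereas the paper splits through $R_k^\varepsilon(x^{(j)})$ using the uniform $\mathcal C^{1/3}$ bound of Corollary~\ref{Rhold}; both work equally well.
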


\subsection{Determination of the coefficients of $\varphi^\varepsilon$}\label{bord}

We are now in position to determine the coefficients appearing in the
definition~\eqref{defphi} of $\varphi^\varepsilon$, starting
with $(K^\varepsilon_k)_{k \in \llbracket 1,N \rrbracket}$.


\begin{lem}\label{cercle1}
For all $k\in \llbracket 1,N \rrbracket $, 
it holds 
$$\varphi^\varepsilon(x)=1- K_k^\varepsilon |s^\varepsilon_k |+O\Big(|K^\varepsilon|\Big)+O\Big( K_k^\varepsilon |s^\varepsilon_k| |K^\varepsilon| \Big)+O\left( |\varepsilon|^{1/6} \right)$$
uniformly in $x\in \Gamma_k^\varepsilon$.
\end{lem}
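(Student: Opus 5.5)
Expand $\varphi^\varepsilon$ from \eqref{defphi} on $\Gamma_k^\varepsilon$ and split the sum according to $|\gamma|=1$ and $|\gamma|\ge 2$:
$$\varphi^\varepsilon(x)=1+\sum_{j=1}^N K_j^\varepsilon f_j^\varepsilon(x)+\sum_{2\le|\gamma|\le M}(K^\varepsilon)^\gamma f_\gamma^\varepsilon(x).$$

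\emph{First-order terms.} By Corollary~\ref{corf0}, uniformly on $\Gamma_k^\varepsilon$ we have $f_k^\varepsilon(x)=-|s_k^\varepsilon|+O(|\varepsilon|^{1/6})$, and for $j\neq k$, $f_j^\varepsilon(x)=f_j^0(x^{(k)})+O(|\varepsilon|^{1/6})$. The constants $f_j^0(x^{(k)})$ are finite and independent of $\varepsilon$, since $x^{(j)}\neq x^{(k)}$ and $R_j^0$ is continuous. Hence the $|\gamma|=1$ part equals $-K_k^\varepsilon|s_k^\varepsilon|+O(|K^\varepsilon|)+O(|K^\varepsilon||\varepsilon|^{1/6})$, and the last term is absorbed into $O(|\varepsilon|^{1/6})$ because $K^\varepsilon$ is bounded.

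\emph{Higher-order terms.} Write $f_\gamma^\varepsilon=v_\gamma+\sum_{j\in\mathrm{supp}\,\gamma}b_{\gamma,j}f_j^\varepsilon$ as in \eqref{fgamma}. By \eqref{uholder}, $v_\gamma$ is continuous on $\overline\Omega$, hence bounded on $\Gamma_k^\varepsilon$, so it contributes $O(|K^\varepsilon|^{|\gamma|})\subset O(|K^\varepsilon|)$. The coefficients $b_{\gamma,j}$ are fixed real numbers, and the finitely many of them are bounded. Each $f_j^\varepsilon$ with $j\neq k$ is $O(1)$ on $\Gamma_k^\varepsilon$, contributing $O(|K^\varepsilon|^2)$. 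The only potentially large term is $b_{\gamma,k}f_k^\varepsilon$ with $k\in\mathrm{supp}\,\gamma$, which is $O(|s_k^\varepsilon|)+O(1)$ on $\Gamma_k^\varepsilon$. Since $\gamma_k\ge1$, we have $(K^\varepsilon)^\gamma\le K_k^\varepsilon|K^\varepsilon|^{|\gamma|-1}\le K_k^\varepsilon|K^\varepsilon|$ for $|\gamma|\ge2$. So this term contributes $O(K_k^\varepsilon|s_k^\varepsilon||K^\varepsilon|)+O(|K^\varepsilon|)$.

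\emph{Expected obstacle.} The one point that needs care is the factoring $(K^\varepsilon)^\gamma\le K_k^\varepsilon|K^\varepsilon|$ whenever $k\in\mathrm{supp}\,\gamma$. This is what tames the divergent factor $|s_k^\varepsilon|$. Terms with $k\notin\mathrm{supp}\,\gamma$ carry no $f_k^\varepsilon$ and stay bounded. Because $M$ is finite, summing all these bounds gives the stated estimate uniformly in $x\in\Gamma_k^\varepsilon$.
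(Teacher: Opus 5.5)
Your proposal is correct and follows essentially the same route as the paper: you split off the $|\gamma|=1$ terms and evaluate them with Corollary~\ref{corf0}, then bound the $|\gamma|\ge 2$ terms via \eqref{fgamma}, \eqref{uholder} and Corollary~\ref{corf0}. You also make explicit the factoring $(K^\varepsilon)^\gamma = K_k^\varepsilon (K^\varepsilon)^{\gamma-e_k}$ for $k\in\mathrm{supp}\,\gamma$, which the paper uses without comment (the paper states the bounded part of the higher-order terms as $O(|K^\varepsilon|^2)$, but your $O(|K^\varepsilon|)$ suffices for the lemma).
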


\begin{proof}
Let $x\in \Gamma_k^\varepsilon$.
We have thanks to \eqref{defphi} 
and Corollary \ref{corf0}
\begin{align}\label{decoup1}
\varphi^\varepsilon(x)&=1- K_k^\varepsilon |s^\varepsilon_k |+  \sum_{j\neq k}  K^\varepsilon_{j} f_{j}^0(x^{(k)})+O\left( |\varepsilon|^{1/6} \right)+\sum_{m=2}^M \mathop{\sum_{\gamma \in \N^N;}}_{\substack{|\gamma|=m}} (K^\varepsilon)^\gamma f_\gamma^\varepsilon(x) \\
		&=1- K_k^\varepsilon |s^\varepsilon_k |+O\Big(|K^\varepsilon|\Big)+O\left( |\varepsilon|^{1/6} \right)+\sum_{m=2}^M \mathop{\sum_{\gamma \in \N^N;}}_{\substack{|\gamma|=m}} (K^\varepsilon)^\gamma f_\gamma^\varepsilon(x).
\end{align}
Similarly, we have 
$$\sum_{m=2}^M \mathop{\sum_{\gamma \in \N^N;}}_{\substack{|\gamma|=m}} (K^\varepsilon)^\gamma f_\gamma^\varepsilon(x)  =O\Big( K_k^\varepsilon |s^\varepsilon_k| |K^\varepsilon| \Big)+O\left( |K^\varepsilon|^2 \right)$$
thanks to Corollary \ref{corf0}, \eqref{fgamma} and
\eqref{uholder}. 
This concludes the proof.
\end{proof}
\hip
In view of Lemma \ref{cercle1} and in order to cancel the leading term
of $\varphi^\varepsilon|_{\Gamma_{\mathcal D}}$, we set for $k\in \llbracket 1,N \rrbracket$
\begin{align}\label{defK}
K_k^\varepsilon=\frac{1}{|s^\varepsilon_k|}=\left| \ln \left(\frac{\varepsilon_k}{2}\right)\right|^{-1}
\end{align}
according to Lemma \ref{Sbord}.
Notice that the $O(|\varepsilon|^{1/6})$ remainder is then absorbed by
the $O(|K^\varepsilon|)$ (it actually becomes of order $O\left(
  |K^\varepsilon|^\infty \right)$) so that:
$$\varphi^\varepsilon(x)=O\Big(|K^\varepsilon|\Big)$$
uniformly in $x\in \Gamma_k^\varepsilon$.
With this definition of the coefficients $(K^\varepsilon_k)$, it also
holds (see Corollary~\ref{corf0})
\begin{align}\label{bornesf}
f_k^\varepsilon(x)=- (K_k^\varepsilon)^{-1} +O\left( |\varepsilon|^{1/6} \right)
\end{align}
uniformly in $x\in \Gamma_k^\varepsilon$. 

Let us now provide a more precise expansion of the function
$\varphi^\varepsilon$ which will be useful to determine the parameters $(b_{\beta,j})\mathop{}_{\substack{1\leq |\beta|\leq M \\  j\in
    \mathrm{supp}\, \beta}}$.
\begin{lem}\label{coeffcercle}
Let $k\in \llbracket 1,N \rrbracket$. 
The function $\varphi^\varepsilon$ satisfies 
\begin{align}
\varphi^\varepsilon(x)&= 
\mathop{
\sum_{\gamma \in \N^N;}}_{\substack{0\leq | \gamma|\leq M-1}}
(K^\varepsilon)^\gamma \tilde \varphi^{(k)}_\gamma+\mathop{
\sum_{\gamma \in \N^N;}}_{\substack{ | \gamma|= M}}(K^\varepsilon)^\gamma\left(
v_\gamma(x^{(k)})+
 \mathop{\sum_{j\in \mathrm{supp}\, \gamma;}}_{\substack{j\neq k}} b_{\gamma,j}f_{j}^0(x^{(k)}) 
\right) +O\left( |K^\varepsilon|^\infty \right)
\end{align}
uniformly in $x\in \Gamma_k^\varepsilon$,
where $\tilde \varphi_0^{(k)}=0$ and 
for $|\gamma|\geq 1$,
$$\tilde \varphi^{(k)}_\gamma=v_\gamma(x^{(k)})+
 \mathop{\sum_{j\in \mathrm{supp}\, \gamma;}}_{\substack{j\neq k}} b_{\gamma,j}f_{j}^0(x^{(k)}) - b_{\gamma+e_k,k} 
. $$
In particular, it holds
$$\varphi^\varepsilon(x)=\mathop{
\sum_{\gamma \in \N^N;}}_{\substack{0\leq | \gamma|\leq M-1}}
(K^\varepsilon)^\gamma \tilde \varphi^{(k)}_\gamma+O\left(|K^\varepsilon|^M\right)$$
uniformly in $x\in \Gamma_k^\varepsilon$.
\end{lem}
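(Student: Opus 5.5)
The plan is to evaluate $\varphi^\varepsilon$ on $\Gamma_k^\varepsilon$ term by term using the decomposition \eqref{fgamma}, together with the boundary values of Corollary~\ref{corf0} and \eqref{bornesf}, and then regroup by monomials. Fix $x\in\Gamma_k^\varepsilon$ and a multi-index $\gamma$ with $1\le|\gamma|\le M$. By \eqref{fgamma} (and \eqref{c1} when $|\gamma|=1$),
\begin{equation}
f_\gamma^\varepsilon(x)=v_\gamma(x)+\sum_{j\in\mathrm{supp}\,\gamma,\ j\neq k} b_{\gamma,j}f_j^\varepsilon(x)+\1_{k\in\mathrm{supp}\,\gamma}\,b_{\gamma,k}f_k^\varepsilon(x).
\end{equation}
I will estimate the three pieces as follows.
\begin{itemize}
\item By the Hölder regularity \eqref{uholder} and $|x-x^{(k)}|\le C\varepsilon_k$, we get $v_\gamma(x)=v_\gamma(x^{(k)})+O(|\varepsilon|^{1/2})$.
\item For $j\neq k$, Corollary~\ref{corf0} gives $f_j^\varepsilon(x)=f_j^0(x^{(k)})+O(|\varepsilon|^{1/6})$.
\item For $j=k$, \eqref{bornesf} gives $f_k^\varepsilon(x)=-(K_k^\varepsilon)^{-1}+O(|\varepsilon|^{1/6})$.
\end{itemize}
Every $O(|\varepsilon|^{\alpha})$ remainder is $O(|K^\varepsilon|^\infty)$, since $K_k^\varepsilon=|\ln(\varepsilon_k/2)|^{-1}$. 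Hence $\varepsilon_k^\alpha=2e^{-\alpha/K_k^\varepsilon}$ decays faster than any power of $K_k^\varepsilon$. The coefficients $b_{\gamma,j}$ are fixed numbers and the sum is finite, so these errors stay $O(|K^\varepsilon|^\infty)$ after multiplication by $(K^\varepsilon)^\gamma$.

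Substituting into \eqref{defphi} gives, uniformly on $\Gamma_k^\varepsilon$,
\begin{equation}
\varphi^\varepsilon(x)=1+\sum_{1\le|\gamma|\le M}(K^\varepsilon)^\gamma\Big(v_\gamma(x^{(k)})+\sum_{j\in\mathrm{supp}\,\gamma,\ j\neq k}b_{\gamma,j}f_j^0(x^{(k)})\Big)-\sum_{1\le|\gamma|\le M,\ \gamma_k\ge1} b_{\gamma,k}(K^\varepsilon)^{\gamma-e_k}+O(|K^\varepsilon|^\infty).
\end{equation}
The key step is the reindexing of the last sum. Setting $\beta=\gamma-e_k$, it becomes $\sum_{0\le|\beta|\le M-1}b_{\beta+e_k,k}(K^\varepsilon)^\beta$, since $\gamma_k\ge1$ is exactly the condition $\beta\in\N^N$.

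Now collect the coefficient of each monomial $(K^\varepsilon)^\beta$.
\begin{itemize}
\item For $\beta=0$, the coefficient is $1-b_{e_k,k}=0$ by \eqref{c1}, matching $\tilde\varphi_0^{(k)}=0$.
\item For $1\le|\beta|\le M-1$, the first and last sums combine into $v_\beta(x^{(k)})+\sum_{j\neq k}b_{\beta,j}f_j^0(x^{(k)})-b_{\beta+e_k,k}=\tilde\varphi^{(k)}_\beta$.
\item For $|\beta|=M$, only the first sum contributes, which gives exactly the stated top-order term.
\end{itemize}
This proves the first identity. The ``in particular'' then follows because the $|\gamma|=M$ coefficients are bounded constants, so that block is $O(|K^\varepsilon|^M)$.

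The only delicate point is that the $-(K_k^\varepsilon)^{-1}$ singularity of $f_k^\varepsilon$ on its own window lowers the degree by one. This is what produces the shift $b_{\gamma+e_k,k}$, and it is why the expansion is only complete up to degree $M-1$. One must also check that the dropped $O(|\varepsilon|^{1/6})$ remainders are not amplified when multiplied by $(K^\varepsilon)^\gamma$. They are not, because the factor $(K_k^\varepsilon)^{-1}$ appears exactly, not inside the remainder.
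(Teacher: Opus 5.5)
Your proof is correct and is essentially the paper's argument. You expand each $f_\gamma^\varepsilon$ on $\Gamma_k^\varepsilon$ via \eqref{fgamma}, Corollary~\ref{corf0} and \eqref{uholder}, absorb the $O(|\varepsilon|^{\alpha})$ remainders into $O(|K^\varepsilon|^\infty)$, and reindex $\gamma\mapsto\gamma-e_k$ in the terms produced by $-(K_k^\varepsilon)^{-1}$. The paper wraps this same computation in an induction on $M$, adding the layer $|\gamma|=M$ to $\varphi^\varepsilon_{M-1}$, whereas your single direct substitution is equivalent and slightly cleaner (one harmless slip: $\varepsilon_k^\alpha=2^\alpha e^{-\alpha/K_k^\varepsilon}$, not $2e^{-\alpha/K_k^\varepsilon}$).
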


\begin{proof}
Let $x\in \Gamma_k^\varepsilon$, let us indicate explicitly the
dependency of $\varphi^\varepsilon$ on $M$ using the notation $\varphi_M^\varepsilon=\varphi^\varepsilon$ and let us proceed by induction on $M\in \N_{\geq 1}$.

When $M=1$, the statement writes thanks to 
\eqref{c1} as
\begin{align}\label{amq}
\varphi_1^\varepsilon (x)=
\sum_{j\neq k} K_j^\varepsilon f_j^0(x^{(k)})
+O\left(|K^\varepsilon|^{\infty}\right)
\end{align}
which holds true as a consequence of \eqref{decoup1} and \eqref{defK}.

Suppose now that $M> 1$ and that the result holds true for the function $\varphi^\varepsilon_{M-1}.$
We have 
\begin{align}\label{phiphitilde}
\varphi_M^\varepsilon=  \varphi_{M-1}^\varepsilon + \mathop{
\sum_{\gamma \in \N^N;}}_{\substack{ | \gamma|=M}} (K^\varepsilon)^\gamma  f_\gamma^\varepsilon.
\end{align}
Besides, combining Corollary 
\ref{corf0} 
and \eqref{uholder} yields
\begin{align}
f_\gamma^\varepsilon(x)&=v_\gamma(x)+
\sum_{j\in \mathrm{supp}\, \gamma;}
 b_{\gamma,j}f_{j}^\varepsilon(x) 
 \\
		&=v_\gamma(x^{(k)})+
 \mathop{\sum_{j\in \mathrm{supp}\, \gamma;}}_{\substack{j\neq k}} b_{\gamma,j}f_{j}^0(x^{(k)}) -b_{\gamma,k}(K_k^\varepsilon)^{-1} \1_{\gamma_k\neq 0} 
+O\Big( |K^\varepsilon|^\infty \Big). \label{expfgamma}
\end{align}
Note that the third term 
depends on $\varepsilon$.
About this particular term, we have
$$\mathop{
\sum_{\gamma \in \N^N;}}_{\substack{ | \gamma|=M}} (K^\varepsilon)^\gamma b_{\gamma,k}(K_k^\varepsilon)^{-1} \1_{\gamma_k\neq 0} = \mathop{
\sum_{\gamma \in \N^N;}}_{\substack{ | \gamma|=M-1}} (K^\varepsilon)^\gamma b_{\gamma+e_k,k}$$
which combined with \eqref{phiphitilde} and \eqref{expfgamma} shows that the result also holds for $\varphi^\varepsilon_M$ and the proof is therefore complete.
\end{proof}

We are now in position to determine the parameters
$(b_{\beta,j})\mathop{}_{\substack{1\leq |\beta|\leq M \\  j\in
    \mathrm{supp}\, \beta}}$ so that the quasimode
$\varphi^\varepsilon$ satisfies the homogeneous Dirichlet boundary
conditions on $\Gamma^\varepsilon_{\mathcal D}$ up to an
error of order $O(|K^\varepsilon|^M)$.

\begin{lem}\label{resolcercle}
There exists a unique choice of the family
$$
(b_{\beta,j})\mathop{}_{\substack{1\leq |\beta|\leq M \\  j\in \mathrm{supp}\, \beta}}    $$
such that for all $k\in \llbracket 1,N \rrbracket$ and $\gamma\in \N^N$ satisfying $0\leq |\gamma|\leq M-1$, 
we have 
$
\tilde \varphi^{(k)}_\gamma=0.$
This family is defined by induction as follows: For $|\beta|=1$,
$b_{\beta,j}=1$ and for $|\beta|>1$,
\begin{align}\label{formb}
b_{\beta,j}=v_{\beta-e_j}(x^{(j)})
+\mathop{\sum_{n\in \mathrm{supp}\, (\beta-e_j);}}_{\substack{n\neq j}} b_{\beta-e_j,n} f_{n}^0(x^{(j)}).
\end{align}
\end{lem}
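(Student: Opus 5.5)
The plan is to read the conditions $\tilde\varphi^{(k)}_\gamma=0$ as a triangular linear system in the unknowns $b$ and solve it by induction on the length of the multi-index. By Lemma~\ref{coeffcercle}, $\tilde\varphi^{(k)}_0=0$ holds automatically, so the conditions to impose are, for $1\le|\gamma|\le M-1$ and every $k$,
$$b_{\gamma+e_k,k}=v_\gamma(x^{(k)})+\sum_{j\in\mathrm{supp}\,\gamma,\ j\neq k} b_{\gamma,j}f_j^0(x^{(k)}).$$
Set $\beta=\gamma+e_k$, so that $k\in\mathrm{supp}\,\beta$, $|\beta|\ge2$ and $\gamma=\beta-e_k$. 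After renaming the indices ($k\to j$, $j\to n$), this is exactly \eqref{formb}.

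Next I check that the correspondence between unknowns and conditions is bijective. The map $(\gamma,k)\mapsto(\gamma+e_k,k)$, with $|\gamma|\in\llbracket1,M-1\rrbracket$ and $k\in\llbracket1,N\rrbracket$, is a bijection onto the pairs $(\beta,j)$ with $|\beta|\in\llbracket 2,M\rrbracket$ and $j\in\mathrm{supp}\,\beta$. Its inverse is $(\beta,j)\mapsto(\beta-e_j,j)$, and $\beta-e_j\in\N^N$ precisely because $\beta_j\ge1$. So each unknown $b_{\beta,j}$ with $|\beta|\ge2$ appears on the left-hand side of exactly one condition, and each condition determines exactly one such unknown. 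The unknowns with $|\beta|=1$ are not touched by any condition; they are fixed to $1$ by the convention \eqref{c1}, which is part of the construction of $f_k^\varepsilon=f_{e_k}^\varepsilon$.

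It remains to check that the system is genuinely triangular. The right-hand side of \eqref{formb} involves only:
\begin{itemize}
\item $b_{\beta-e_j,n}$, where $|\beta-e_j|=|\beta|-1$;
\item the values $f_n^0(x^{(j)})$ with $n\ne j$, which are fixed finite numbers independent of the $b$'s, and finite because $f_n^0=S_n^0+R_n^0$ is continuous away from $x^{(n)}$;
\item $v_{\beta-e_j}$.
\end{itemize}
The term $v_{\beta-e_j}$ needs the most care. It is built from $F_{\beta-e_j}$, which by \eqref{eq:F} involves the $a_{\cdot}$ and $f^0_{\cdot}$ of multi-indices of length at most $|\beta|-2$. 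Through \eqref{compgamma} and \eqref{fgamma}, these depend only on the $b$'s of length at most $|\beta|-2$. For $|\beta-e_j|=1$ it is simply $v=0$ by \eqref{c1}.

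Hence, by strong induction on $m=|\beta|$, all $b_{\beta,j}$ with $|\beta|=m$ are determined uniquely once those of smaller length are known. This gives both existence and uniqueness of the family.

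The only delicate point is the bookkeeping: one must make sure that the functions $v_\gamma$ and $f^0_\gamma$ used in Lemma~\ref{coeffcercle} are exactly those built from the previously fixed $b$'s, so that no circular dependence arises. The rest is direct substitution.
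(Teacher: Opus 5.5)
Your proposal is correct and follows essentially the paper's argument: the paper also inducts on the length, uses the bijection $(\beta,j)\mapsto(\beta-e_j,j)$ between the unknowns of length $M$ and the conditions $\tilde\varphi^{(j)}_{\beta-e_j}=0$ of length $M-1$, and notes that the resulting affine map has linear part $-\mathrm{Id}$. That observation is your triangular back-substitution, and your explicit check that $v_{\beta-e_j}$ involves only $b$'s of length at most $|\beta|-2$ spells out a dependency the paper leaves implicit.
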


\begin{proof}
We once again proceed by induction on $M\in \N_{\geq 1}$.
When $M=1$, the result follows immediately from 
 \eqref{c1} 
and the fact that $\tilde \varphi_0^{(k)}=0$ (see Lemma \ref{coeffcercle}).

Suppose now that $M> 1$ and that there exist
$$
 (b'_{\beta,j})\mathop{}_{\substack{1\leq |\beta|\leq M-1 \\  j\in \mathrm{supp}\, \beta}}    $$
such that for all $k\in \llbracket 1,N \rrbracket$ and $\gamma\in \N^N$ satisfying $0\leq |\gamma|\leq M-2$, 
we have 
$
\tilde \varphi^{(k)}_\gamma=0.$
Since the $(\tilde \varphi^{(k)}_\gamma)_{0\leq |\gamma|\leq M-2}$ only depend on $
(b_{\beta,j})_{1\leq |\beta|\leq M-1}$, we must take (by the
uniqueness of the induction hypothesis) for $1\leq |\beta| \leq M-1$
$$
(b_{\beta,j})_{j\in \mathrm{supp}\, \beta}=(b'_{\beta,j})_{j\in \mathrm{supp}\, \beta}$$
and it only remains to find 
$(b_{\beta,j})_{ |\beta|= M} $ 
such that for all $k\in \llbracket 1,N \rrbracket$ and $\gamma\in \N^N$ satisfying $|\gamma|= M-1$, 
we have 
$
\tilde \varphi^{(k)}_\gamma=0.$
To this aim, we denote
$$E=\left\{(\beta,j)\in \N^N \times \llbracket 1,N \rrbracket\, ; \, |\beta|=M \text{ and } j\in \mathrm{supp} \, \beta\right\} \quad \text{and} \quad F=\{(\gamma,j) \in \N^N \times \llbracket 1,N \rrbracket\, ; \, |\gamma|=M-1\}.$$ 
Since the map
$$
\left\{
  \begin{aligned}
E &\to F\\
(\beta,j)&\mapsto (\beta-e_j,j)
\end{aligned}
\right.
$$
is a bijection, it induces an isomorphism between $\R^E$ and $\R^F$.
Besides, it appears from the definition of $\tilde \varphi^{(k)}_\gamma$ that the map
$$\left\{
  \begin{aligned}
\R^E&\to \R^E\\
(b_{\beta,j})&\mapsto \left(\tilde \varphi^{(j)}_{\beta-e_j}\right)
\end{aligned}
\right.
$$
is affine and that its linear part is $ -\mathrm{Id}$.
This proves that there exists a unique family 
$$(b_{\beta,j})\mathop{}_{\substack{|\beta|= M \\  j\in \mathrm{supp}\, \beta}}    $$
such that for all $k\in \llbracket 1,N \rrbracket$ and $\gamma\in \N^N$ satisfying $|\gamma|= M-1$, 
we have 
$
\tilde \varphi^{(k)}_\gamma=0.$
Moreover, its expression is given by \eqref{formb} thanks to the expression of $\tilde \varphi^{(k)}_\gamma$.
This concludes the proof.
\end{proof}

In the following, for all $1\leq|\beta|\leq M$, the values of
$(b_{\beta,j})_{j\in \mathrm{supp}\, \beta}$ are those given by Lemma \ref{resolcercle}.
Recall that thanks to Lemma \ref{lemcomp} and \eqref{compgamma}, it also fixes the value of $a_\beta$.
Now that this choice has been made, let us conclude this section by
summarizing the properties satisfied by the quasimode $\varphi^\varepsilon$.

\begin{prop}\label{propphi}
The function $\varphi^\varepsilon$ is such that :
\begin{enumerate}[label=\alph*)]
\item It satisfies $\varphi^\varepsilon =  1 + O(|K^\varepsilon|)$ in
  $L^2(\Omega)$. In particular, $\|\varphi^\varepsilon\|_2=|\Omega|^{1/2}+O(|K^\varepsilon|)$.  \label{phiO1}
\item $\varphi^\varepsilon \in H^1(\Omega)$.     \label{phiH1}
\item It satisfies $\partial_n \varphi^\varepsilon=0$ on $\Gamma_{\mathcal N}^\varepsilon$. \label{phiNeum}
\item It satisfies $\varphi^\varepsilon(x)=O\left(|K^\varepsilon|^M\right)$ uniformly in $x\in \Gamma^\varepsilon_{\mathcal D}$.  \label{phiBord}
\item It satisfies the approximate eigenvalue problem
\begin{align}\label{vpappro2}
\bigg( -\Delta -
 \mathop{\sum_{\gamma \in \N^N;}}_{\substack{1\leq |\gamma|\leq M}} a_\gamma (K^\varepsilon)^\gamma \bigg) \varphi^\varepsilon=O\left( |K_\varepsilon|^{M+1} \right) \qquad \text{in } L^2(\Omega).
\end{align}  \label{phiDelta}
\end{enumerate}
%
%
\end{prop}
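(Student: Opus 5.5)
The proof checks the five items one by one, collecting results already established. All the $f^\varepsilon_\gamma$ are assembled by the recursion \eqref{fgamma} from three ingredients: the functions $v_\gamma\in H^2(\Omega)$, which do not depend on $\varepsilon$, and the building blocks $f_k^\varepsilon=S_k^\varepsilon+R_k^\varepsilon$. The coefficients $b_{\gamma,k}$ and $a_\gamma$ are fixed real numbers by Lemma~\ref{resolcercle}, Lemma~\ref{lemcomp} and \eqref{compgamma}. So most properties reduce to properties of $S_k^\varepsilon$ and $R_k^\varepsilon$.

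For \ref{phiO1}, Lemma~\ref{f0} gives $f^\varepsilon_\gamma=f^0_\gamma+O(|\varepsilon|^{1/3})$ in $L^2(\Omega)$, so $\|f^\varepsilon_\gamma\|_2$ is bounded uniformly in $\varepsilon$. Each term of \eqref{defphi} with $|\gamma|\ge1$ is therefore $O(|K^\varepsilon|)$ in $L^2$, which gives the first claim. The norm estimate follows from the triangle inequality, since $\|1\|_2=|\Omega|^{1/2}$. For \ref{phiH1}, we have $S_k^\varepsilon\in H^1$ by Lemma~\ref{SH1}, $R_k^\varepsilon\in W^{1,3}\subset H^1$ by Corollary~\ref{Rhold} (as $\Omega$ is bounded), and $v_\gamma\in H^2$; the finite linear combination is then in $H^1$. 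For \ref{phiNeum}, this is exactly the last statement of Proposition~\ref{blibre}. For \ref{phiBord}, we apply Lemma~\ref{coeffcercle} with the choice of Lemma~\ref{resolcercle}: all $\tilde\varphi^{(k)}_\gamma$ with $|\gamma|\le M-1$ vanish, so $\varphi^\varepsilon=O(|K^\varepsilon|^M)$ uniformly on each $\Gamma^\varepsilon_k$, hence on $\Gamma^\varepsilon_{\mathcal D}$.

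The only item needing care is \ref{phiDelta}. Proposition~\ref{blibre} gives the remainder $O(|K^\varepsilon|^{M+1})+O(|\varepsilon|^{1/3})$. The task is to check that $|\varepsilon|^{1/3}$ is absorbed. By \eqref{defK}, $K_k^\varepsilon=|\ln(\varepsilon_k/2)|^{-1}$, so $\varepsilon_k=2e^{-1/K^\varepsilon_k}$. Hence $|\varepsilon|^{1/3}=O(|K^\varepsilon|^\infty)$, and in particular $O(|K^\varepsilon|^{M+1})$. This is the same absorption already used after \eqref{defK}.

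To be safe, I would also recompute $(-\Delta-\sum a_\gamma (K^\varepsilon)^\gamma)\varphi^\varepsilon$ directly. Using $\Delta f^\varepsilon_\gamma=-a_\gamma-F_\gamma$, which holds in $\Omega$ thanks to \eqref{eq:log} and Corollary~\ref{Rhold}, the terms of order $|\gamma|\le M$ cancel exactly when $F_\gamma$ is written with $f^0_\beta$. What is left is a sum of terms $(K^\varepsilon)^{\gamma}a_{\gamma-\beta}(f^\varepsilon_\beta-f^0_\beta)$, which are $O(|\varepsilon|^{1/3})$ in $L^2$ by Lemma~\ref{f0}, plus terms with total degree $>M$ and bounded $L^2$ coefficients. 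This confirms the claimed $O(|K^\varepsilon|^{M+1})$ bound in $L^2(\Omega)$.
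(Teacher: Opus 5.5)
Your proposal is correct and follows essentially the same route as the paper, which likewise obtains a) from Lemma~\ref{f0}, b) from \eqref{uholder}, Corollary~\ref{Rhold} and Lemma~\ref{SH1}, d) from Lemmas~\ref{coeffcercle} and~\ref{resolcercle}, and e) from Proposition~\ref{blibre} with Lemma~\ref{f0} and the absorption $|\varepsilon|^{1/3}=O(|K^\varepsilon|^\infty)$ given by \eqref{defK}. For c) the paper cites Corollary~\ref{Rhold} and \eqref{vgam} directly rather than Proposition~\ref{blibre}, but this is the same fact: the Neumann hypothesis of Proposition~\ref{blibre} on $f_k^\varepsilon=S_k^\varepsilon+R_k^\varepsilon$ is exactly what \eqref{eq:log} and Corollary~\ref{Rhold} supply.
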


\begin{proof}
Item \ref{phiO1} is a direct consequence of Lemma \ref{f0} and \eqref{defphi}.
Item \ref{phiH1} follows from \eqref{uholder}, \eqref{RW1} and Lemma \ref{SH1}.
Item \ref{phiNeum} is obtained thanks to Corollary \ref{Rhold} and \eqref{vgam}.
Item \ref{phiBord} is a consequence of Lemmas \ref{coeffcercle} and \ref{resolcercle}.
Finally, item \ref{phiDelta} follows from Proposition \ref{blibre}, Corollary \ref{Rhold}, Lemma \ref{f0} and \eqref{defK}.
\end{proof}

\section{Mean exit time}\label{sec:MET}

Let us first give a first estimate on $\lambda_0^\varepsilon$.

\begin{thm}\label{gap}
There exists $c>0$ such that for all $\varepsilon \in (\mathbb{R}_+^*)^{N}$ with $|\varepsilon|$ small enough, we have
$$\mathrm{Spec}(
\mathcal L
^{\varepsilon})\cap [0,c] = \{
\lambda_0^\varepsilon\}.$$
Moreover, it holds $
\lambda_0^\varepsilon=O\left(|K^\varepsilon|\right)$.
\end{thm}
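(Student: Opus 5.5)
The proof has two independent parts: an upper bound $\lambda_0^\varepsilon=O(|K^\varepsilon|)$ obtained from a test function, and a lower bound on the second eigenvalue $\lambda_1^\varepsilon\ge c$ obtained by comparison with the pure Neumann Laplacian.

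\emph{Upper bound on $\lambda_0^\varepsilon$.} A natural test function is the quasimode $\varphi^\varepsilon$, but it does not lie in $H^1_{0,\Gamma^\varepsilon_{\mathcal D}}$: it only vanishes up to $O(|K^\varepsilon|^M)$ on $\Gamma_{\mathcal D}^\varepsilon$. The cleanest route uses the truncation $\varphi_+ := (\varphi^\varepsilon)_+$ with $M=1$, taken where $\varphi^\varepsilon$ is positive.

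The claim that this truncation vanishes on the windows is not automatic from Proposition~\ref{propphi}. On $\Gamma_{\mathcal D}^\varepsilon$ we only know $\varphi^\varepsilon = O(|K^\varepsilon|)$, and this could be positive. To fix this, I would replace $\varphi^\varepsilon$ by $\varphi^\varepsilon - C|K^\varepsilon|$ for a suitable constant $C$, so that it is $\le 0$ on $\Gamma_{\mathcal D}^\varepsilon$. Then $\varphi_+$ belongs to $H^1_{0,\Gamma^\varepsilon_{\mathcal D}}$, using $\varphi^\varepsilon\in H^1(\Omega)$ from Proposition~\ref{propphi}.

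Next I bound the Rayleigh quotient of $\varphi_+$:
\[
\lambda_0^\varepsilon \le \frac{\int_\Omega|\nabla \varphi_+|^2}{\|\varphi_+\|_2^2}.
\]
The denominator is $|\Omega|+O(|K^\varepsilon|)$, because $\varphi^\varepsilon = 1+O(|K^\varepsilon|)$ in $L^2(\Omega)$. For the numerator, $\int|\nabla\varphi_+|^2\le\int_{\{\varphi^\varepsilon>C|K^\varepsilon|\}}|\nabla\varphi^\varepsilon|^2$, and I would estimate it by Green's formula on the superlevel set, or more simply on all of $\Omega$:
\[
\int|\nabla\varphi^\varepsilon|^2 = \int(-\Delta\varphi^\varepsilon)\varphi^\varepsilon + \int_{\partial\Omega}\varphi^\varepsilon\,\partial_n\varphi^\varepsilon .
\]
This is justified via Lemma~\ref{SH1}. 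The interior term is $O(|K^\varepsilon|)$ by \eqref{vpappro2}, since $a_\gamma (K^\varepsilon)^\gamma=O(|K^\varepsilon|)$. The boundary term reduces to $\Gamma_{\mathcal D}^\varepsilon$ because the Neumann condition holds on $\Gamma_{\mathcal N}^\varepsilon$. On $\Gamma_{\mathcal D}^\varepsilon$ we have $\varphi^\varepsilon=O(|K^\varepsilon|)$ uniformly. Moreover, $\int_{\Gamma_k^\varepsilon}\partial_n f_k^\varepsilon$ is bounded, since by Green and Lemma~\ref{lemcomp} it equals $-a_k|\Omega| + O(1)$ and the flux is essentially $-\pi$.

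The main obstacle is that $\partial_n\varphi^\varepsilon$ on the windows is only a distribution with the $w_k^\varepsilon$ density. I would handle it by computing directly from \eqref{defS}: the normal derivative of $S_k^\varepsilon$ on $\Gamma_k^\varepsilon$ equals $-\pi w_k^\varepsilon/\|w_k^\varepsilon\|_1$, which is nonnegative-weighted with mass $\pi$. Pairing this against a uniformly bounded $\varphi^\varepsilon = O(|K^\varepsilon|)$ gives $O(|K^\varepsilon|)\cdot K^\varepsilon_k\cdot\pi$. Hence the numerator is $O(|K^\varepsilon|)$.

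\emph{Lower bound on the second eigenvalue.} The second eigenvalue $\lambda_1^\varepsilon$ is handled by min-max comparison with the Neumann Laplacian on $\Omega$. Since $H^1_{0,\Gamma^\varepsilon_{\mathcal D}}\subset H^1(\Omega)$, the $k$-th eigenvalue of $\mathcal L^\varepsilon$ is at least the $k$-th Neumann eigenvalue. Therefore
\[
\lambda_1^\varepsilon \ge \mu_1^{\mathcal N}(\Omega) =: 2c > 0,
\]
using that $\Omega$ is connected and smooth. Combined with $\lambda_0^\varepsilon = O(|K^\varepsilon|) < c$ for $|\varepsilon|$ small, this gives $\mathrm{Spec}(\mathcal L^\varepsilon)\cap[0,c]=\{\lambda_0^\varepsilon\}$.
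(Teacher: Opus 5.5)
Your proof is correct. The second-eigenvalue part is the paper's own min--max comparison with the Neumann Laplacian, but your bound $\lambda_0^\varepsilon=O(|K^\varepsilon|)$ follows a genuinely different route.

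\textbf{The paper's route.} The paper never works on the true windows. Extension by zero gives $\lambda_0^\varepsilon\le\bar\lambda_0^\varepsilon$, the principal eigenvalue of the perforated domain $\Omega\setminus\bigcup_k B(x^{(k)},\varepsilon_k)$ with Dirichlet conditions on the artificial arcs. The paper tests this with the $\varepsilon$-independent function $1+\alpha^{-1}\sum_k K_k^\varepsilon(\ln|x-x^{(k)}|+R_k)$, with $\alpha^*\to 1$ chosen so that it is nonpositive on those arcs. It then takes the positive part and bounds the energy directly in polar coordinates, $(K_k^\varepsilon)^2\int_{\varepsilon_k}^{\rho}r^{-1}\,dr\le C K_k^\varepsilon$. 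This needs neither a Green formula nor any information about layer potentials at the boundary.

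\textbf{What your route costs and buys.} You recycle the order-one quasimode on the real geometry. The cost is that you must identify $\partial_n\varphi^\varepsilon$ on $\Gamma^\varepsilon_{\mathcal D}$. The gain is a sharper result: since $-\Delta\varphi^\varepsilon=\frac{\pi}{|\Omega|}\overline{K^\varepsilon}$ exactly for $M=1$ and the boundary term is $O(|K^\varepsilon|^2)$, you actually get the sharp leading-order upper bound $\lambda_0^\varepsilon\le\frac{\pi}{|\Omega|}\overline{K^\varepsilon}+O(|K^\varepsilon|^2)$.

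\textbf{Points to state precisely.}
\begin{enumerate}
\item On $\Gamma_k^\varepsilon$, $\partial_n S_k^\varepsilon$ equals $-\pi w_k^\varepsilon/(\|w_k^\varepsilon\|_1|\psi_k'|)$ plus the bounded term $\|w_k^\varepsilon\|_1^{-1}\int w_k^\varepsilon(t)N_k(x,t)\,dt$. That extra term cancels exactly against $\partial_n R_k^\varepsilon$ (Proposition~\ref{solR} gives $\partial_n\tilde R_k=-N_k$ on all of $\partial\Omega$), so the clean formula holds for $f_k^\varepsilon$, not for $S_k^\varepsilon$.
\item This identification comes from the classical jump relation for the single-layer potential. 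It applies because $w_k^\varepsilon$ is smooth with compact support inside the open window, by \eqref{propchi} and \eqref{T-T+}. Lemma~\ref{SH1} alone only gives $H^1$, which defines the $H^{-1/2}$ normal derivative but does not identify it.
\item What you really use is $\|\partial_n f_k^\varepsilon\|_{L^1(\Gamma_k^\varepsilon)}=\pi$. This follows from the sign of the density, not merely from the flux $\int_{\Gamma_k^\varepsilon}\partial_n f_k^\varepsilon$, which is exactly $-\pi$ rather than $-\pi+O(1)$.
\end{enumerate}
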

\begin{proof}
As mentioned in Section~\ref{sec:qoi}, the operator $\mathcal
L^\varepsilon$  has discrete spectrum. By using the min-max principle, its
second eigenvalue is bounded from below by the second eigenvalue of
the opposite of the Laplace operator with full Neumann boundary conditions on
$\partial \Omega$: this proves the first statement of the theorem.

Concerning the estimate of $\lambda_0^\varepsilon$, the proof is very close to the one of~\cite[Theorem 1.2]{LeRaSt},
where one considers the case when $\Omega$ is a disk. Let us provide
the proof for completeness.
Recall that
\begin{equation}
    \label{eq:rayleigh}
    \lambda_0^\varepsilon 
    = \inf_{u \in H^1(\Omega), u=0 \text{ on } 
    \Gamma_{\mathcal D}^\varepsilon} \frac{\int_{\Omega} |\nabla u|^2}{\int_{\Omega} |u|^2},
\end{equation}
so that  $\lambda_0^\varepsilon \ge 0$. In order to prove the upper
bound, we build a simple quasimode as follows.


Consider the modified domain $\Omega_\varepsilon = \Omega \backslash
\cup_{k=1}^N B(x^{(k)}, \varepsilon_k)$
whose boundary is again split into two subsets:
${\overline{\Gamma}_{\mathcal D}^\varepsilon = \cup_{k=1}^N \partial B(x^{(k)},
  \varepsilon_k)} \cap \partial \Omega_\varepsilon$ and the complementary set
$\Gamma_{\mathcal N}^\varepsilon$.
Notice that $\lambda_0^\varepsilon \le \bar \lambda_0^\varepsilon$
where
\begin{equation}
    \label{eq:rayleigh_tilde}
  \bar   \lambda_0^\varepsilon 
    = \inf_{\bar u \in H^1(\Omega_\varepsilon), \bar u=0 \text{ on } 
   \bar \Gamma_{\mathcal D}^\varepsilon}
 \frac{\int_{\Omega_\varepsilon} |\nabla \bar
   u|^2}{\int_{\Omega_\varepsilon} |\bar u|^2},
\end{equation}
since any function $\bar u \in H^1(\Omega_\varepsilon)$ such that $
\bar u=0$ on 
   $\bar \Gamma_{\mathcal D}^\varepsilon$ can be extended to a
 function $ u \in H^1(\Omega), u=0$ on  
   $ \Gamma_{\mathcal D}^\varepsilon$ by considering $u=\bar u
  1_{\Omega_\varepsilon}$, and that then,
$\frac{\int_{\Omega} |\nabla (\bar
   u 1_{\Omega_\varepsilon})|^2}{\int_{\Omega} |\bar u 1_{\Omega_\varepsilon}|^2}=  \frac{\int_{\Omega_\varepsilon} |\nabla \bar
   u|^2}{\int_{\Omega_\varepsilon} |\bar u|^2}$.

 Let us now
 introduce the   quasimode $\varphi^\varepsilon_\alpha$ defined as,
 for some $\alpha > 0$ to be defined later on, 
\[
    \varphi^\varepsilon_\alpha = 1 + \frac{1}{\alpha}\sum_{k=1}^{N} K^\varepsilon_k f_k(x),
\]
with $f_k(x) = \ln(|x - x^{(k)}|) + R_k(x)$, with $R_k(x) =\tilde  R_k(x,0)$, see Equation~\eqref{eq:Rkepd} and
Proposition~\ref{solR} for the
definition of $\tilde R_k(x,t)$. Let us emphasize that $R_k$ and $f_k$ do not depend on
$\varepsilon$. From the previous results, we know that $f_k
\in H^1 (\Omega_\varepsilon)$, and $\partial_n f_k = 0$ on
$\Gamma^\varepsilon_{\mathcal N}$ for any $\varepsilon > 0$. Moreover, from
Proposition~\ref{solR}, $\|R_k\|_{\infty} < \infty$ and $\|R_k\|_{H^1(\Omega)} < \infty$.

Now, for a fixed $k\in \llbracket 1,N \rrbracket$, 
we have that, for $x \in \partial B(x^{(k)},\varepsilon_k)$,
\begin{align*}
    \varphi_\alpha^\varepsilon(x) 
    & = 1 + \frac{1}{\alpha} K^\varepsilon_k \left(  \ln|x - x^{(k)}| + \mathrm{O}\left( 1\right)
    \right) + \mathrm{O}(|K^\varepsilon|)\\
    & = 1 - \frac{1}{\alpha} \Big( 1  + \mathrm{O}\left(K^\varepsilon_{k}\right)
    \Big) + \mathrm{O}(|K^\varepsilon|)
\end{align*}
 in
$L^\infty(\Omega_\varepsilon)$.
Hence, for any $\varepsilon$ small enough, there exists $\alpha^* = \alpha^*(\varepsilon)$ such that 
\[
    \varphi_{\alpha^*}^\varepsilon \leq 0 \text{ on } \overline
    \Gamma_{\mathcal D}^\varepsilon.
\]
Therefore, 
$\left. \varphi_{\alpha^*}^\varepsilon \right|_+ =
\max(\varphi_{\alpha^*}^\varepsilon,0)$ is a function in
$H^1(\Omega_\varepsilon)$ such that,
$\left. \varphi_{\alpha^*}^\varepsilon \right|_+=0$ on
$\overline{\Gamma}_{\mathcal D}^\varepsilon $. 
Notice that one can choose $\alpha^*$ such that $\alpha^*$ goes to
$1$ in the limit $\varepsilon \to 0$, so that we assume in the
following that $\alpha^*$ is bounded from below and from
above by a positive constant
independent of $\varepsilon$.
Let us now evaluate the Rayleigh quotient introduced in~\eqref{eq:rayleigh_tilde} for $\bar u = \left. \varphi_{\alpha^*}^\varepsilon \right|_+$. 
First the denominator $
\vert|\left. \varphi_{\alpha^*}^\varepsilon\right|_+
\vert|_{L^2(\Omega_\varepsilon)}^2$ is bounded from below by a
positive constant
independent of $\varepsilon$ as 
\begin{equation}
    \label{eq:bound denominator}
    \left. \varphi_{\alpha^*}^\varepsilon \right|_+ =  1 + \mathrm{O}_{L^2(\Omega_\varepsilon)}(|K^\varepsilon|).
\end{equation}
For the numerator, one gets,
$$\int_{\Omega_\varepsilon} |\nabla
\left. \varphi_{\alpha^*}^\varepsilon\right|_+|^2 \le
C \sum_{k=1}^N  (K^\varepsilon_k)^2 \left(  \int_{\Omega_\varepsilon} |\nabla \ln |x -
  x^{(k)}| |^2\, dx + \|R_k\|_{H^1(\Omega)}^2 \right)  $$ and 
using polar coordinates, for some $\rho > 0$ (independent of
$\varepsilon$, since $\Omega$ is bounded),
$$
    (K^\varepsilon_k)^2 \int_{\Omega_\varepsilon} |\nabla \ln |x -
  x^{(k)}||^2\, dx
  \leq C  (K^\varepsilon_k)^2 \int_{\varepsilon_k}^\rho
   \frac{1}{r^{2}} r \, dr \leq (K^\varepsilon_k)^2
             C |\ln(\varepsilon_k)| \leq C K^\varepsilon_k,
$$
for positive constant $C$ independent of $\varepsilon$, whose value change from one occurrence to another. Thus, 
one has that
\begin{equation} \label{eq:bound gradient}
    \int_{\Omega_\varepsilon} |\nabla \varphi_{\alpha^*}^\varepsilon|^2
    \leq C |K^\varepsilon|.
\end{equation}
Finally, 
plugging \eqref{eq:bound denominator} and \eqref{eq:bound gradient} in \eqref{eq:rayleigh_tilde} leads to
\[
\lambda_0^\varepsilon \le    \bar \lambda_0^\varepsilon \leq C |K^\varepsilon|, 
\] 
which concludes the proof. 
  \end{proof}

From now on, we denote $
 u_0^\varepsilon$ an eigenfunction of $
\mathcal L
^\varepsilon$ associated to $
 \lambda_0^\varepsilon$.
It is known (see for instance \cite[Theorem 8.38]{Gilbarg-Trudinger})
that $\lambda_0^\varepsilon$ is a simple eigenvalue and that $u_0^\varepsilon$ has a sign on $\Omega_\varepsilon$.
We can thus choose to renormalize it so that
\begin{align}\label{u,1}
\langle u_0^\varepsilon, \1 \rangle=-\|u_0^\varepsilon\|_{1}=-1.
\end{align}
where here and in the following, $\langle \cdot, \cdot \rangle$
denotes the $L^2$ scalar product on $L^2(\Omega)$.
%
%
%
%
This will in particular be useful to establish the following lemma.

\begin{lem}\label{phi,u}
It holds 
$$\|u_0^\varepsilon\|_2=|\Omega|^{-1/2}+O(|K^\varepsilon|)$$
and
$$\langle \varphi^\varepsilon , u_0^\varepsilon \rangle = -1+O(|K^\varepsilon|).$$
\end{lem}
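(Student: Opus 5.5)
The idea is to show that $u_0^\varepsilon$ is close in $L^2(\Omega)$ to a multiple of the constant function. We use the Rayleigh quotient together with the spectral gap of the Neumann Laplacian. Let $\Pi$ denote the $L^2(\Omega)$-orthogonal projection onto constants, $\Pi u=\langle u\rangle$. Since $u_0^\varepsilon\in H^1(\Omega)$, the Poincaré–Wirtinger inequality gives
$$\|u_0^\varepsilon-\langle u_0^\varepsilon\rangle\|_2^2\le C\int_\Omega|\nabla u_0^\varepsilon|^2=C\lambda_0^\varepsilon\|u_0^\varepsilon\|_2^2 .$$
Here we used the variational characterization \eqref{eq:rayleigh}: the Neumann constraint is not needed for Poincaré–Wirtinger, only $u_0^\varepsilon\in H^1(\Omega)$. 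By Theorem~\ref{gap}, $\lambda_0^\varepsilon=O(|K^\varepsilon|)$, so
$$u_0^\varepsilon=\langle u_0^\varepsilon\rangle+O(|K^\varepsilon|^{1/2})\|u_0^\varepsilon\|_2$$
in $L^2(\Omega)$. Moreover the normalization \eqref{u,1} gives $\langle u_0^\varepsilon\rangle=-1/|\Omega|$ exactly.

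Next we control $\|u_0^\varepsilon\|_2$. By Pythagoras,
$$\|u_0^\varepsilon\|_2^2=|\Omega|\langle u_0^\varepsilon\rangle^2+\|u_0^\varepsilon-\langle u_0^\varepsilon\rangle\|_2^2=|\Omega|^{-1}+O(\lambda_0^\varepsilon)\|u_0^\varepsilon\|_2^2 .$$
Hence $\|u_0^\varepsilon\|_2^2(1+O(\lambda_0^\varepsilon))=|\Omega|^{-1}$. Taking square roots gives $\|u_0^\varepsilon\|_2=|\Omega|^{-1/2}+O(|K^\varepsilon|)$. Note that the error is linear in $\lambda_0^\varepsilon$, which is exactly the claimed order.

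For the scalar product, write $\varphi^\varepsilon=1+r^\varepsilon$ with $\|r^\varepsilon\|_2=O(|K^\varepsilon|)$, which holds by Proposition~\ref{propphi}\ref{phiO1}. Then
$$\langle\varphi^\varepsilon,u_0^\varepsilon\rangle=\langle 1,u_0^\varepsilon\rangle+\langle r^\varepsilon,u_0^\varepsilon\rangle=-1+O(|K^\varepsilon|)\,\|u_0^\varepsilon\|_2 .$$
By the first part, $\|u_0^\varepsilon\|_2$ is bounded, so this is $-1+O(|K^\varepsilon|)$. It is important to use only the crude $L^2$ closeness of $\varphi^\varepsilon$ to $1$ here, and not the $O(|K^\varepsilon|^{1/2})$ estimate on $u_0^\varepsilon-\langle u_0^\varepsilon\rangle$, which would lose a square root.

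The only delicate point is the first step, that is, checking that the Poincaré–Wirtinger argument yields an error of order $\lambda_0^\varepsilon$ in the norm rather than $\sqrt{\lambda_0^\varepsilon}$. The Pythagorean identity above settles this, since the deviation from the mean enters squared. Everything else is routine.
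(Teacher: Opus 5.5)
Your proof is correct and follows essentially the same route as the paper. Both arguments use Poincaré--Wirtinger together with $\lambda_0^\varepsilon=O(|K^\varepsilon|)$ from Theorem~\ref{gap}, then the identity $\|u_0^\varepsilon-\langle u_0^\varepsilon\rangle\|_2^2=\|u_0^\varepsilon\|_2^2-|\Omega|^{-1}$ coming from the normalization \eqref{u,1}, and finally Cauchy--Schwarz with $\varphi^\varepsilon=1+O(|K^\varepsilon|)$ in $L^2(\Omega)$ for the scalar product.
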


\begin{proof}
Let us start with the first statement.
Applying the Poincaré-Wirtinger inequality to $u^\varepsilon_0$ (since $\Omega$ is bounded) gives
\begin{align}\label{pw}
\left\|u_0^\varepsilon-\frac{1}{|\Omega|}\langle u_0^\varepsilon  ,
  \1\rangle\right\|_2^2\leq C \|\nabla u_0^\varepsilon\|_2^2= C
  \lambda_0^\varepsilon \|u_0^\varepsilon\|_2^2\leq C |K^\varepsilon| \|u_0^\varepsilon\|_2^2
\end{align}
thanks to Theorem \ref{gap}.
By \eqref{u,1}, the left-hand side satisfies
\begin{align}\label{lhs}
\left\|u_0^\varepsilon-\frac{1}{|\Omega|}\langle u_0^\varepsilon  ,
  \1\rangle\right\|_2^2=\|u_0^\varepsilon\|_2^2-\frac{1}{|\Omega| }.
\end{align}
Combining \eqref{pw}, \eqref{lhs}, we obtain 
\begin{align}\label{mino}
\|u_0^\varepsilon\|_2=  |\Omega|^{-1/2}+O\left( |K^\varepsilon| \right)
\end{align}
which is the first statement. 
Since $\varphi^\varepsilon=1+O(|K^\varepsilon|)$ in $L^2(\Omega)$,
the second statement is a consequence of the first one and of \eqref{u,1}.
The proof is therefore complete.
\end{proof}
\hip

We are now in position to state the first main result.
\begin{thm}\label{lam0}
We have 
$$ \lambda_0^\varepsilon=\tilde \lambda_0^\varepsilon+O\Big( |K^\varepsilon|^{M+1} \Big) $$
with
\begin{align}\label{lamtilde}
\tilde\lambda_0^\varepsilon=\mathop{\sum_{\gamma \in \N^N;}}_{\substack{1\leq |\gamma|\leq M}} a_\gamma (K^\varepsilon)^\gamma
\end{align}
where we recall that the coefficients $a_\gamma$ are fixed by Lemma \ref{resolcercle} combined with Lemma \ref{lemcomp} and \eqref{compgamma}.

\end{thm}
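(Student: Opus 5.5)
The plan is to test the approximate eigenvalue equation \eqref{vpappro2} against $u_0^\varepsilon$ and use Green's formula. Since $\varphi^\varepsilon\notin\mathcal D(\mathcal L^\varepsilon)$, a boundary term on $\Gamma^\varepsilon_{\mathcal D}$ will appear, and it must be controlled. Write
$$r^\varepsilon:=\big(-\Delta-\tilde\lambda_0^\varepsilon\big)\varphi^\varepsilon=O(|K^\varepsilon|^{M+1}) \quad\text{in } L^2(\Omega),$$
which holds by Proposition~\ref{propphi}~\ref{phiDelta}. Taking the scalar product with $u_0^\varepsilon$ and applying Green's formula twice gives
$$\langle -\Delta\varphi^\varepsilon,u_0^\varepsilon\rangle=\langle \varphi^\varepsilon,-\Delta u_0^\varepsilon\rangle+\int_{\partial\Omega}\big(\varphi^\varepsilon\,\partial_n u_0^\varepsilon-u_0^\varepsilon\,\partial_n\varphi^\varepsilon\big)\,\D\sigma .$$
The term $u_0^\varepsilon\partial_n\varphi^\varepsilon$ vanishes on $\Gamma^\varepsilon_{\mathcal D}$ because $u_0^\varepsilon=0$ there. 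It also vanishes on $\Gamma^\varepsilon_{\mathcal N}$ by Proposition~\ref{propphi}~\ref{phiNeum}. The term $\varphi^\varepsilon\partial_nu_0^\varepsilon$ vanishes on $\Gamma^\varepsilon_{\mathcal N}$. To make this rigorous (both functions are in $H^1$ but not $H^2$ near the endpoints of the windows), I would state it in weak form. Since $\varphi^\varepsilon\in H^1(\Omega)$ by~\ref{phiH1}, and $\Delta\varphi^\varepsilon\in L^2$ with zero Neumann data on $\Gamma^\varepsilon_{\mathcal N}$, the identity reads
$$\lambda_0^\varepsilon\langle\varphi^\varepsilon,u_0^\varepsilon\rangle=\int_\Omega\nabla\varphi^\varepsilon\cdot\nabla u_0^\varepsilon-\langle\partial_nu_0^\varepsilon,\varphi^\varepsilon\rangle_{\Gamma^\varepsilon_{\mathcal D}},$$
together with
$$\int_\Omega\nabla\varphi^\varepsilon\cdot\nabla u_0^\varepsilon=\langle-\Delta\varphi^\varepsilon,u_0^\varepsilon\rangle .$$
The second identity is legitimate because $u_0^\varepsilon\in H^1_{0,\Gamma^\varepsilon_{\mathcal D}}$ and $\partial_n\varphi^\varepsilon=0$ on $\Gamma^\varepsilon_{\mathcal N}$. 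The first is where the $H^{-1/2}$–$H^{1/2}$ pairing enters. Combining the two yields
$$(\lambda_0^\varepsilon-\tilde\lambda_0^\varepsilon)\langle\varphi^\varepsilon,u_0^\varepsilon\rangle=\langle r^\varepsilon,u_0^\varepsilon\rangle+\langle\partial_nu_0^\varepsilon,\varphi^\varepsilon\rangle_{\Gamma^\varepsilon_{\mathcal D}}.$$

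By Lemma~\ref{phi,u}, $\langle\varphi^\varepsilon,u_0^\varepsilon\rangle=-1+O(|K^\varepsilon|)$, which is bounded away from $0$. Also $|\langle r^\varepsilon,u_0^\varepsilon\rangle|\le\|r^\varepsilon\|_2\|u_0^\varepsilon\|_2=O(|K^\varepsilon|^{M+1})$. It therefore remains to show that the boundary term is $O(|K^\varepsilon|^{M+1})$.

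This boundary term is the main obstacle. The plan is to exploit the sign of $u_0^\varepsilon$. Since $u_0^\varepsilon\le0$ in $\Omega$ and $u_0^\varepsilon=0$ on $\Gamma^\varepsilon_{\mathcal D}$, the distribution $\partial_nu_0^\varepsilon$ is a nonnegative measure on $\Gamma^\varepsilon_{\mathcal D}$. Its total mass equals $\int_\Omega\Delta u_0^\varepsilon=-\lambda_0^\varepsilon\int_\Omega u_0^\varepsilon=\lambda_0^\varepsilon$; this is \eqref{eq:exitlaw} with $\psi=1$. Proposition~\ref{propphi}~\ref{phiBord} gives $\|\varphi^\varepsilon\|_{L^\infty(\Gamma^\varepsilon_{\mathcal D})}=O(|K^\varepsilon|^M)$, and Theorem~\ref{gap} gives $\lambda_0^\varepsilon=O(|K^\varepsilon|)$. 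Together,
$$|\langle\partial_nu_0^\varepsilon,\varphi^\varepsilon\rangle|\le\lambda_0^\varepsilon\,\|\varphi^\varepsilon\|_{L^\infty(\Gamma^\varepsilon_{\mathcal D})}=O(|K^\varepsilon|^{M+1}).$$

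To justify pairing a measure with $\varphi^\varepsilon$, I would work on the trace of $\varphi^\varepsilon$ on $\Gamma^\varepsilon_{\mathcal D}$. That trace is continuous there: $R_k^\varepsilon$ and $v_\gamma$ are Hölder, and $S_k^\varepsilon$ is continuous on the window since $w_k^\varepsilon$ is bounded. If needed, I would approximate $\varphi^\varepsilon$ by $\varphi^\varepsilon$ minus an $H^1$ extension of its boundary values, or regularize $u_0^\varepsilon$ near the window endpoints. Testing against $u_0^\varepsilon$ rather than $\Delta$-testing avoids ever needing $\partial_n\varphi^\varepsilon$ on $\Gamma^\varepsilon_{\mathcal D}$, which is singular like $w_k^\varepsilon$. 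Dividing by $\langle\varphi^\varepsilon,u_0^\varepsilon\rangle$ then concludes the proof.
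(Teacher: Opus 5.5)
Your proposal is correct and is essentially the paper's proof: Green's formula against $u_0^\varepsilon$, the $\Gamma^\varepsilon_{\mathcal N}$ term killed by item~\ref{phiNeum} of Proposition~\ref{propphi}, the bulk term handled by item~\ref{phiDelta} and Lemma~\ref{phi,u}, and the $\Gamma^\varepsilon_{\mathcal D}$ term bounded by $\lambda_0^\varepsilon\|\varphi^\varepsilon\|_{L^\infty(\Gamma^\varepsilon_{\mathcal D})}=O(|K^\varepsilon|^{M+1})$ using the positivity and total mass of $\partial_n u_0^\varepsilon$ (Proposition~\ref{prop:borne_dnu0}) together with Theorem~\ref{gap}. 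One harmless slip: your own two identities give the boundary term in the combined identity as $-\langle\partial_n u_0^\varepsilon,\varphi^\varepsilon\rangle_{\Gamma^\varepsilon_{\mathcal D}}$, not $+$, but you only bound its absolute value, so nothing changes.
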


\begin{proof}
Let us use a Green formula (which is licit
thanks to item \ref{phiH1} from Proposition \ref{propphi} and since $\Delta \varphi^\varepsilon \in L^2(\Omega)$) to write
\begin{align}\label{lam01}
\lambda_0^\varepsilon=\frac{\lambda_0^\varepsilon \langle \varphi^\varepsilon, u_0^\varepsilon \rangle}{ \langle \varphi^\varepsilon, u_0^\varepsilon \rangle}=\frac{ \langle \varphi^\varepsilon, -\Delta u_0^\varepsilon \rangle}{ \langle \varphi^\varepsilon, u_0^\varepsilon \rangle}=\frac{ \langle -\Delta \varphi^\varepsilon, u_0^\varepsilon \rangle - \langle  \varphi^\varepsilon, \partial_n u_0^\varepsilon \rangle_{\partial \Omega} + \langle \partial_n \varphi^\varepsilon,  u_0^\varepsilon \rangle_{\partial \Omega} }{ \langle \varphi^\varepsilon, u_0^\varepsilon \rangle}.
\end{align}
The last term is actually zero since $\partial_n \varphi^\varepsilon$ vanishes on $ \Gamma^\varepsilon_{\mathcal N}$ (by Proposition \ref{propphi}, item \ref{phiNeum}), while $u_0^\varepsilon$ vanishes on $\Gamma^\varepsilon_{\mathcal D}$.
We thus have
\begin{align}\label{lam02}
\lambda_0^\varepsilon=\frac{ \langle -\Delta \varphi^\varepsilon,
  u_0^\varepsilon \rangle }{ \langle \varphi^\varepsilon,
  u_0^\varepsilon \rangle} +  \langle  \varphi^\varepsilon,
  \partial_n u_0^\varepsilon \rangle_{\partial \Omega}  \left( 1 + O\Big(  |K^\varepsilon| \Big)\right)
\end{align}
thanks to Lemma \ref{phi,u}.
By Proposition \ref{propphi}, item \ref{phiDelta} and still using Lemma \ref{phi,u}, the first term satisfies
$$\frac{ \langle -\Delta \varphi^\varepsilon, u_0^\varepsilon \rangle }{ \langle \varphi^\varepsilon, u_0^\varepsilon \rangle}=\tilde\lambda_0^\varepsilon 
+O\Big( |K^\varepsilon|^{M+1} \Big).$$
In order to control the second one, let us use Proposition~\ref{prop:borne_dnu0} 
as well as item \ref{phiBord} from Proposition \ref{propphi} to write
\begin{align}\label{lambdaesp}
|\langle   \partial_n u_0^\varepsilon , \varphi^\varepsilon
  \rangle_{\partial \Omega}| \le \lambda_0^\varepsilon \, \| \varphi^\varepsilon\|_{L^\infty(\Gamma^\varepsilon_{\mathcal D})}=  O\Big( \lambda_0^\varepsilon   |K^\varepsilon|^{M} \Big)=O\Big(  |K^\varepsilon|^{M+1} \Big) 
\end{align}
where we also used 
%
%
%
Theorem \ref{gap}.
This completes the proof.
\end{proof}
\begin{rema}
  At the first order, Theorem~\ref{lam0} gives
  \begin{align}\label{lam0first}
    \lambda_0^\varepsilon = \frac{\pi}{|\Omega|} \overline{K}^\varepsilon + O(|K^\varepsilon|^2)
  \end{align}
  where we introduced the notation $\overline{K}^\varepsilon = \sum_{k=1}^N K_k^\varepsilon$.
  This is in agreement with~\cite[Theorem 3.1]{Ammari2, PWK, SSH2}, and
  the result of~\cite[Theorem 1.5]{LeRaSt} in the case of the disk, 
  since $|\Omega|=\pi$ in this setting.
\end{rema}

\section{Law of the first exit point}\label{sec:LFEP}

For $k\in \llbracket 1, N \rrbracket$, the aim of this section is to give a precise computation of
$$\frac{\langle \partial_n u_0^\varepsilon, \1_{\Gamma_k^\varepsilon}
  \rangle_{\partial \Omega}}{\langle \partial_n u_0^\varepsilon, \1
  \rangle_{\partial \Omega}}.$$
Notice that the numerator is well defined since $\partial_n
u_0^\varepsilon$ can be identified as a positive Radon measure with
finite mass (see Appendix~\ref{sec:prob_res}).
Of course, this is relevant only when $N\geq 2$, so we now suppose that this is the case.
The denominator can easily be computed thanks to~\eqref{eq:u0}-\eqref{u,1}:
\begin{align}\label{dnuGamma}
\langle \partial_n u_0^\varepsilon, \1 \rangle_{\partial \Omega}=\langle \Delta u_0^\varepsilon, \1 \rangle=\lambda_0^\varepsilon.
\end{align}
We will thus denote for shortness
\begin{align}\label{defXk}
Y^\varepsilon_k=\frac{\langle \partial_n u_0^\varepsilon, \1_{\Gamma_k^\varepsilon} \rangle_{\partial \Omega}}{\lambda_0^\varepsilon}.
\end{align}

\hop
Before we begin our analysis, let us first introduce a few notations.
For a fixed $k_0\in \llbracket 1,N \rrbracket$, we now denote (compare
with~\eqref{defphi}):  for all $x \in \Omega$, 
\begin{align}\label{hatphi}
 \hat\varphi_{k_0}^\varepsilon (x) = 1+ 
 \mathop{\sum_{\gamma \in \N^{N}; \gamma_{k_0}=0}}_{\substack{1\leq |\gamma|\leq M}} (K^\varepsilon)^\gamma \hat f_\gamma^{\varepsilon,(k_0)}(x)
\end{align}
the quasimode associated to the problem where the exit region around $x^{(k_0)}$ has been removed, with
$$\hat f_\gamma^{\varepsilon,(k_0)}=\hat v_\gamma^{(k_0)}+\sum_{k\in \mathrm{supp}\, \gamma}  \hat b^{(k_0)}_{\gamma,k} f_k^\varepsilon.$$
The quantities $\hat v_\gamma^{(k_0)}$ and $\hat
b_{\gamma,k}^{(k_0)}$ are those defined previously (see \eqref{vgam} and Lemma
\ref{resolcercle}), but considered only for some $\gamma$'s such that
$\gamma_{k_0}=0$.
Similarly, we put in view of Theorem \ref{lam0} and \eqref{lamtilde}
\begin{align}\label{hatlam}
\hat \lambda_{0,k_0}^\varepsilon=\mathop{\sum_{\gamma \in \N^N;\gamma_{k_0}=0}}_{\substack{1\leq |\gamma|\leq M}} \hat a_\gamma^{(k_0)} (K^\varepsilon)^\gamma
\end{align}
where, we recall, the $(\hat a_\gamma^{(k_0)})_{\gamma \in
  \N^N;\gamma_{k_0}=0}$ satisfy (see~\eqref{compgamma},~\eqref{eq:F}, Lemma~\ref{lemcomp})
$$a_k= \frac{\pi}{|\Omega|} \text{ for }k\neq k_0 \qquad \text{and} \qquad \hat a_\gamma^{(k_0)}=- \mathop{\sum_{\beta \in \N^N;\; \beta\leq \gamma;}}_{\substack{1\leq |\beta|\leq |\gamma|-1}} \hat a^{(k_0)}_{\gamma-\beta} \langle \hat f_\beta^{0,(k_0)}\rangle +\frac{\pi}{|\Omega|}\sum_{k\in \mathrm{supp}\, \gamma}  \hat b^{(k_0)}_{\gamma,k}$$
for $|\gamma|>1$ such that $\gamma_{k_0}=0$.

The computation of the $(Y^\varepsilon_k)_{k\in \llbracket 1,N \rrbracket}$ will require a precise computation of $(\langle \hat f_\gamma^{\varepsilon,(k_0)}, u_0^\varepsilon \rangle)_{k_0\in \llbracket 1,N \rrbracket}$ which is the point of the next Lemma.

Let us emphasize that the notation
  $(k_0)$ in $\hat f_\gamma^{\varepsilon,(k_0)}$, $\hat
  v_\gamma^{(k_0)}$, $\hat
  a^{(k_0)}_{\gamma}$, and $\hat b^{(k_0)}_{\gamma,k}$ is simply used
  to recall that these coefficients are the ones built previously, but
  considered for a multi-index $\gamma$ such that $\gamma_{k_0}=0$.

\begin{lem}\label{cdl}
Let $k_0\in \llbracket 1, N \rrbracket$ as well as $\gamma\in \N^{N}$ such that $\gamma_{k_0}=0$ and $|\gamma|\geq 1$.
For all $n\in \llbracket 0, |\gamma|-1\rrbracket$, there exist some real numbers $c_n^{(k_0)}(\gamma)$, $(d^{(k_0)}_{n,k}(\gamma))_{k\in \mathrm{supp}\, \gamma}$ and $(\ell^{(k_0)}_{n,k}(\gamma))_{k\in \llbracket 1,N \rrbracket}$ such that
$$(\lambda_0^\varepsilon)^{|\gamma|}\langle \hat f^{\varepsilon,(k_0)}_\gamma, u_0^\varepsilon \rangle=\sum_{n=0}^{|\gamma|-1}(\lambda_0^\varepsilon)^{n} \left( c^{(k_0)}_n(\gamma)+\sum_{k\in \mathrm{supp}\, \gamma} d^{(k_0)}_{n,k}(\gamma) \frac{\lambda_0^\varepsilon}{K_k^\varepsilon} Y^\varepsilon_k + \sum_{k=1}^N   \ell^{(k_0)}_{n,k}(\gamma) \lambda_0^\varepsilon Y^\varepsilon_k  \right) +O\left( |K^\varepsilon|^\infty \right) .$$
These are defined by induction on 
$ |\gamma|\geq 1$ as follows : when $|\gamma|=1$, say $\gamma=e_j$ (with $j\neq k_0$), we have 
\begin{align}\label{cdl0}
c_0^{(k_0)}(e_j)=- \frac{\pi}{|\Omega|}, \qquad d^{(k_0)}_{0,j}(e_j)=1, \qquad \ell^{(k_0)}_{0,k}(e_j)=-f_j^0(x^{(k)})
\1_{k\neq j}
\end{align}
and when $|\gamma|>1$, we have for $n\in \llbracket 0, |\gamma|-2\rrbracket$
\begin{align}\label{cn}
c_n^{(k_0)}(\gamma)=
\mathop{\sum_{\beta \in \N^{N};\,\beta\leq \gamma 
;}}_{\substack{|\gamma|-1-n \leq |\beta|\leq |\gamma|-1}} \hat a^{(k_0)}_{\gamma-\beta}\, c^{(k_0)}_{n-(|\gamma|-1-|\beta|)}(\beta),
\end{align}
\begin{align}\label{dn}
  \text{for } k \in \mathrm{supp}\, \gamma, \quad
d^{(k_0)}_{n,k}(\gamma)=
\mathop{\sum_{\beta \in \N^{N};\,\beta\leq \gamma; \, k\in \mathrm{supp}\, \beta}}_{\substack{|\gamma|-1-n \leq |\beta|\leq |\gamma|-1}} \hat a^{(k_0)}_{\gamma-\beta}\, d^{(k_0)}_{n-(|\gamma|-1-|\beta|),k}(\beta),
\end{align}

\begin{align}\label{ln}
\ell^{(k_0)}_{n,k}(\gamma)=
\mathop{\sum_{\beta \in \N^{N};\,\beta\leq \gamma 
;}}_{\substack{|\gamma|-1-n \leq |\beta|\leq |\gamma|-1}} \hat a^{(k_0)}_{\gamma-\beta}\, \ell^{(k_0)}_{n-(|\gamma|-1-|\beta|),k}(\beta),
\end{align}
while
\begin{align}\label{gamma-1}
c^{(k_0)}_{|\gamma|-1}(\gamma)=- \hat a^{(k_0)}_{\gamma} , \quad
  d^{(k_0)}_{|\gamma|-1,k}(\gamma)=\hat b^{(k_0)}_{\gamma,k}, \quad
  \ell^{(k_0)}_{|\gamma|-1,k}(\gamma)=-\hat v^{(k_0)}_\gamma(x^{(k)})-
  \!\!\!\sum_{j\in \mathrm{supp}\, \gamma \backslash \{k\}} \!\!\!\hat b^{(k_0)}_{\gamma,j} f_j^0(x^{(k)}).
\end{align}

\end{lem}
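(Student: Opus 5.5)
The proof goes by induction on $|\gamma|$. Its one engine is a Green formula pairing $\hat f^{\varepsilon,(k_0)}_\gamma$ against $u_0^\varepsilon$. The integration by parts is licit because $\hat f^{\varepsilon,(k_0)}_\gamma\in H^1(\Omega)$ (by Lemma~\ref{SH1}, \eqref{RW1} and \eqref{uholder}) and $\Delta \hat f^{\varepsilon,(k_0)}_\gamma\in L^2(\Omega)$. It gives
\begin{align}
\lambda_0^\varepsilon\langle \hat f_\gamma^{\varepsilon,(k_0)},u_0^\varepsilon\rangle
=\langle -\Delta \hat f_\gamma^{\varepsilon,(k_0)},u_0^\varepsilon\rangle
-\langle \hat f_\gamma^{\varepsilon,(k_0)},\partial_n u_0^\varepsilon\rangle_{\partial\Omega}
+\langle \partial_n\hat f_\gamma^{\varepsilon,(k_0)},u_0^\varepsilon\rangle_{\partial\Omega}.
\end{align}
The last term vanishes, since $\partial_n \hat f_\gamma^{\varepsilon,(k_0)}=0$ on $\Gamma^\varepsilon_{\gamma,\mathcal N}$ and $u_0^\varepsilon=0$ on $\Gamma^\varepsilon_{\mathcal D}$. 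For the bulk term, \eqref{eq:fgamepd} gives $-\Delta\hat f_\gamma^{\varepsilon,(k_0)}=\hat a^{(k_0)}_\gamma+F_\gamma$, where $F_\gamma=\sum_\beta \hat a^{(k_0)}_{\gamma-\beta}\hat f^{0,(k_0)}_\beta$. Combined with $\langle 1,u_0^\varepsilon\rangle=-1$ from \eqref{u,1}, it equals $-\hat a^{(k_0)}_\gamma+\sum_{\beta}\hat a^{(k_0)}_{\gamma-\beta}\langle \hat f^{0,(k_0)}_\beta,u_0^\varepsilon\rangle$.

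Next I replace $\hat f^{0,(k_0)}_\beta$ by $\hat f^{\varepsilon,(k_0)}_\beta$, using Lemma~\ref{f0}. This costs $O(|\varepsilon|^{1/3})\|u_0^\varepsilon\|_2$, which is $O(|K^\varepsilon|^\infty)$ since $|\varepsilon|^{1/3}$ is exponentially small in $1/|K^\varepsilon|$ and $\|u_0^\varepsilon\|_2$ is bounded by Lemma~\ref{phi,u}. The boundary term lives only on $\Gamma^\varepsilon_{\mathcal D}$, where $\partial_n u_0^\varepsilon$ is a positive measure of total mass $\lambda_0^\varepsilon$. There I use \eqref{fgamma}, Corollary~\ref{corf0} and \eqref{bornesf}: uniformly on $\Gamma_k^\varepsilon$,
\[
\hat f_\gamma^{\varepsilon,(k_0)}=\hat v^{(k_0)}_\gamma(x^{(k)})+\sum_{j\in\mathrm{supp}\,\gamma,\,j\ne k}\hat b^{(k_0)}_{\gamma,j}f_j^0(x^{(k)})-\hat b^{(k_0)}_{\gamma,k}(K_k^\varepsilon)^{-1}\1_{k\in\mathrm{supp}\,\gamma}+O(|K^\varepsilon|^\infty).
\]
Since $\gamma_{k_0}=0$, the index $k_0$ is never in $\mathrm{supp}\,\gamma$, although $k=k_0$ does appear in the $\ell$ sum. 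Integrating against $\partial_n u_0^\varepsilon\1_{\Gamma^\varepsilon_k}$, with mass $\lambda_0^\varepsilon Y^\varepsilon_k$, gives
\[
-\langle \hat f_\gamma^{\varepsilon,(k_0)},\partial_nu_0^\varepsilon\rangle=\sum_{k\in\mathrm{supp}\,\gamma}\hat b^{(k_0)}_{\gamma,k}\frac{\lambda_0^\varepsilon}{K_k^\varepsilon}Y_k^\varepsilon+\sum_{k=1}^N\ell^{(k_0)}_{|\gamma|-1,k}(\gamma)\lambda_0^\varepsilon Y^\varepsilon_k+O(|K^\varepsilon|^\infty),
\]
with $\ell_{|\gamma|-1,k}$ as in \eqref{gamma-1}. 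For $|\gamma|=1$, I use $\hat v=0$, $\hat b=1$ and $a_j=\pi/|\Omega|$; there is no $F$ term, and this yields exactly \eqref{cdl0}.

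For $|\gamma|>1$, I multiply the identity by $(\lambda_0^\varepsilon)^{|\gamma|-1}$. Each term $\hat a^{(k_0)}_{\gamma-\beta}(\lambda_0^\varepsilon)^{|\gamma|-1}\langle\hat f^{\varepsilon,(k_0)}_\beta,u_0^\varepsilon\rangle$ equals $(\lambda_0^\varepsilon)^{|\gamma|-1-|\beta|}$ times the induction expression for $\beta$. Note that $\beta\le\gamma$ forces $\beta_{k_0}=0$, and $\mathrm{supp}\,\beta\subset\mathrm{supp}\,\gamma$. Reindexing the power of $\lambda_0^\varepsilon$ as $n=m+|\gamma|-1-|\beta|$ produces the sums \eqref{cn}, \eqref{dn} and \eqref{ln}, with the constraint $|\gamma|-1-n\le|\beta|$. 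The top-order contributions with $n=|\gamma|-1$ come from the direct terms $-\hat a_\gamma$ and the boundary expansion, which gives \eqref{gamma-1}.

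The main obstacle is the remainder bookkeeping. Multiplying by powers of $\lambda_0^\varepsilon=O(|K^\varepsilon|)$ keeps the $O(|K^\varepsilon|^\infty)$ errors small. However, the boundary expansion must hold \emph{uniformly} on $\Gamma^\varepsilon_k$ with only $O(|\varepsilon|^{1/6})$ error, and pairing it against the measure $\partial_nu_0^\varepsilon$ needs care: the Green formula involves the pairing $\langle\cdot,\cdot\rangle_{H^{1/2},H^{-1/2}}$, while $\hat f$ is only continuous up to the logarithmic part. I would justify this as in \eqref{lambdaesp} via Proposition~\ref{prop:borne_dnu0}, bounding the pairing by $\lambda_0^\varepsilon\|\cdot\|_{L^\infty(\Gamma^\varepsilon_{\mathcal D})}$ for the remainder part.
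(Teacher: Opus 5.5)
Your proposal is correct and follows essentially the same route as the paper. You apply Green's formula against $u_0^\varepsilon$, evaluate the bulk term via \eqref{eq:fgamepd} and \eqref{u,1} after trading $\hat f^{0,(k_0)}_\beta$ for $\hat f^{\varepsilon,(k_0)}_\beta$ through Lemma~\ref{f0}, expand the boundary term via Corollary~\ref{corf0} and \eqref{crochetbord2}, and reindex the powers of $\lambda_0^\varepsilon$ to read off \eqref{cn}--\eqref{gamma-1}. The only difference is cosmetic: you derive the one-step identity for general $\gamma$ and then specialize to $|\gamma|=1$, whereas the paper computes the base case separately before the inductive step.
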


\begin{proof}
Let us proceed by induction on $|\gamma|\geq 1$ for $\gamma$ satisfying $\gamma_{k_0}=0$.

When $|\gamma|=1$, say $\gamma=e_k$ (with $k\neq k_0$), one has (thanks to \eqref{c1}) $\hat
f^{\varepsilon,(k_0)}_\gamma=f^\varepsilon_k$, and proving the Lemma
amounts to proving that
\begin{align}\label{amq3}
\lambda_0^\varepsilon\langle f_k^\varepsilon , u_0^\varepsilon \rangle = -\frac{\pi}{|\Omega|}+\frac{\lambda_0^\varepsilon}{K^\varepsilon_k}Y^\varepsilon_k - \lambda_0^\varepsilon\sum_{j\in \llbracket 1,N \rrbracket \backslash \{k\}} f_k^0(x^{(j)}) Y_j^\varepsilon+O\left( |K^\varepsilon|^\infty \right).
\end{align}
Let us do the computation : using Lemma \ref{SH1} and Corollary \ref{Rhold}, we have
\begin{align}
\lambda_0^\varepsilon \langle f_k^\varepsilon, u_0^\varepsilon \rangle&=
 \langle f_k^\varepsilon, -\Delta u_0^\varepsilon \rangle\\
		&=
 \langle -\Delta f_k^\varepsilon,  u_0^\varepsilon \rangle -  \langle f_k^\varepsilon, \partial_n u_0^\varepsilon \rangle_{\Gamma^\varepsilon_{\mathcal D}}  
 \qquad \text{since }u_0^\varepsilon \text{ (resp. }\partial_n f_k^\varepsilon \text{) vanishes on }\Gamma_{\mathcal D}^\varepsilon \text{ (resp. on }\partial \Omega\backslash\Gamma_{\mathcal D}^\varepsilon\text{)}\\
		&=
 \frac{\pi}{|\Omega|} \langle \1 ,  u_0^\varepsilon \rangle - \sum_{j=1}^N \langle f_k^\varepsilon, \partial_n u_0^\varepsilon \rangle_{\Gamma^\varepsilon_j}  
 \qquad \text{by \eqref{eq:fkepd}, Lemma~\ref{lemcomp}}\\
		&=- \frac{\pi}{|\Omega|}   + 
 \frac{1}{K_k^\varepsilon}\langle \1, \partial_n u_0^\varepsilon \rangle_{\Gamma^\varepsilon_k} - \sum_{j\neq k} f_k^0(x^{(j)})\langle \1, \partial_n u_0^\varepsilon \rangle_{\Gamma^\varepsilon_j} +O\left(  |K^\varepsilon|^\infty \right)
\end{align}
where we also used \eqref{u,1} as well as Corollary \ref{corf0}, \eqref{defK}, and \eqref{crochetbord2}.
This precisely proves that \eqref{amq3} holds true.

Now, let $|\gamma|>1$ and suppose that the result holds true for all
$\beta \in \N^{N}$ such that $1\leq |\beta|\leq |\gamma|-1$ with $\beta_{k_0}=0$.
We do the same integration by parts as in the previous computation :
\begin{align*}
(\lambda_0^\varepsilon)^{|\gamma|} \langle \hat f^{\varepsilon,(k_0)}_\gamma, u_0^\varepsilon \rangle&=(\lambda_0^\varepsilon)^{|\gamma|-1}
 \langle \hat f^{\varepsilon,(k_0)}_\gamma, -\Delta u_0^\varepsilon \rangle\\
		&=(\lambda_0^\varepsilon)^{|\gamma|-1}
\Big( 
 \langle -\Delta \hat f^{\varepsilon,(k_0)}_\gamma,  u_0^\varepsilon \rangle -  \langle \hat f^{\varepsilon,(k_0)}_\gamma, \partial_n u_0^\varepsilon \rangle_{\Gamma^\varepsilon_{\mathcal D}}  \Big)
\\
		&=(\lambda_0^\varepsilon)^{|\gamma|-1} \Big(
 \hat a^{(k_0)}_\gamma \langle \1 ,  u_0^\varepsilon \rangle +\langle \hat F^{(k_0)}_\gamma, u_0^\varepsilon \rangle - \sum_{j=1}^N \langle \hat f^{\varepsilon,(k_0)}_\gamma, \partial_n u_0^\varepsilon \rangle_{\Gamma^\varepsilon_j}  \Big)
 \qquad \text{by \eqref{eq:fgamepd}
           }.
           \end{align*}
           Thus,
  \begin{align}
(\lambda_0^\varepsilon)^{|\gamma|} \langle \hat f^{\varepsilon,(k_0)}_\gamma, u_0^\varepsilon \rangle         
		&=(\lambda_0^\varepsilon)^{|\gamma|-1}\left(
 - \hat a^{(k_0)}_\gamma +\langle \hat F^{(k_0)}_\gamma, u_0^\varepsilon \rangle \right) \\  
		&\quad - (\lambda_0^\varepsilon)^{|\gamma|-1} \sum_{j=1}^N  \left( \hat v^{(k_0)}_\gamma(x^{(j)})  - \frac{\hat b^{(k_0)}_{\gamma,j}}{K_j^\varepsilon} \1_{\gamma_j\neq 0}  +\sum_{k\in \mathrm{supp}\, \gamma \backslash\{j\}} \hat b^{(k_0)}_{\gamma,k} f_k^0(x^{(j)})    \right)
\langle \1, \partial_n u_0^\varepsilon \rangle_{\Gamma^\varepsilon_j}\label{gamma-12}\\
		&\quad +O\left( |K^\varepsilon|^\infty \right) 
\end{align}
where we once again used \eqref{u,1} as well as Corollary \ref{corf0} and \eqref{crochetbord2}.
We are now going to see that
$(\lambda_0^\varepsilon)^{|\gamma|-1}\langle \hat F^{(k_0)}_\gamma,
u_0^\varepsilon \rangle$ only features some terms of order
$(\lambda_0^\varepsilon)^n$ with $n\leq |\gamma|-2$. If this is
proven, \eqref{gamma-1} is a direct consequence of~\eqref{gamma-12} by
considering the terms of order $(\lambda_0^\varepsilon)^n$, with $n >|\gamma|-2$.
It thus remains to compute
$(\lambda_0^\varepsilon)^{|\gamma|-1}\langle \hat F^{(k_0)}_\gamma,
u_0^\varepsilon \rangle$ and to show that \eqref{cn}, \eqref{dn} and \eqref{ln} hold true.
By \eqref{eq:F}, Lemma \ref{f0} and the induction hypothesis, we have
\begin{align}
&(\lambda_0^\varepsilon)^{|\gamma|-1}\langle \hat F^{(k_0)}_\gamma,
                u_0^\varepsilon \rangle  = \mathop{\sum_{\beta \in \N^{N};\,\beta\leq \gamma 
;}}_{\substack{1 \leq |\beta|\leq |\gamma|-1}} (\lambda_0^\varepsilon)^{|\gamma|-1-|\beta|}  \hat a^{(k_0)}_{\gamma-\beta} (\lambda_0^\varepsilon)^{|\beta|} \left\langle \hat f^{\varepsilon,(k_0)}_\beta +O\Big( |K^\varepsilon|^\infty \Big) , u_0^\varepsilon \right\rangle \\
			&=  \!\!\! \mathop{\sum_{\beta \in \N^{N};\,\beta\leq \gamma 
;}}_{\substack{1 \leq |\beta|\leq |\gamma|-1}} \!\!\! (\lambda_0^\varepsilon)^{|\gamma|-1-|\beta|}  \hat a^{(k_0)}_{\gamma-\beta}   
		 \sum_{n=0}^{|\beta|-1}(\lambda_0^\varepsilon)^{n} \left( c^{(k_0)}_n(\beta)+\sum_{k\in \mathrm{supp}\, \beta} d^{(k_0)}_{n,k}(\beta) \frac{\lambda_0^\varepsilon}{K_k^\varepsilon} Y^\varepsilon_k + \sum_{k=1}^N   \ell^{(k_0)}_{n,k}(\beta) \lambda_0^\varepsilon Y^\varepsilon_k  \right) +O\left( |K^\varepsilon|^\infty \right)\\
			&=\sum_{s=1}^{|\gamma|-1} \!\mathop{\sum_{\beta \in \N^{N};\,\beta\leq \gamma 
;}}_{\substack{|\beta|=s}} \!\!\! \hat a^{(k_0)}_{\gamma-\beta} \sum_{n=0}^{s-1}(\lambda_0^\varepsilon)^{n+|\gamma|-1-s} \left( c^{(k_0)}_n(\beta)+\sum_{k\in \mathrm{supp}\, \beta} d^{(k_0)}_{n,k}(\beta) \frac{\lambda_0^\varepsilon}{K_k^\varepsilon}  Y^\varepsilon_k + \sum_{k=1}^N   \ell^{(k_0)}_{n,k}(\beta) \lambda_0^\varepsilon Y^\varepsilon_k  \right) +O\left( |K^\varepsilon|^\infty \right).
\end{align}
By the change of indices $m=n+|\gamma|-1-s$, the right-hand side becomes
(up to the $O\left( |K^\varepsilon|^\infty \right)$ term)
\begin{align}
  &\sum_{s=1}^{|\gamma|-1}\!\!\! \mathop{\sum_{\beta \in \N^{N};\,\beta\leq \gamma 
;}}_{\substack{|\beta|=s}}  \!\!\! \hat a^{(k_0)}_{\gamma-\beta} \!\!\!
 \sum_{m=|\gamma|-1-s}^{|\gamma|-2}\!\!\! (\lambda_0^\varepsilon)^{m} \Bigg( c^{(k_0)}_{m-(|\gamma|-1-s)}(\beta) 
 +\sum_{k\in \mathrm{supp}\, \beta} d^{(k_0)}_{m-(|\gamma|-1-s),k}(\beta) \frac{\lambda_0^\varepsilon}{K_k^\varepsilon} Y^\varepsilon_k+ \sum_{k=1}^N   \ell^{(k_0)}_{m-(|\gamma|-1-s),k}(\beta) \lambda_0^\varepsilon Y^\varepsilon_k  \Bigg)\\
			&=\sum_{m=0}^{|\gamma|-2}(\lambda_0^\varepsilon)^{m}
                   \!\!\!\sum_{s=|\gamma|-1-m}^{|\gamma|-1}\mathop{\sum_{\beta \in \N^{N};\,\beta\leq \gamma 
;}}_{\substack{|\beta|=s}} \!\!\! \hat a^{(k_0)}_{\gamma-\beta} \Bigg( c^{(k_0)}_{m-(|\gamma|-1-s)}(\beta) 
 +\sum_{k\in \mathrm{supp}\, \beta} d^{(k_0)}_{m-(|\gamma|-1-s),k}(\beta) \frac{\lambda_0^\varepsilon}{K_k^\varepsilon} Y^\varepsilon_k+ \sum_{k=1}^N   \ell^{(k_0)}_{m-(|\gamma|-1-s),k}(\beta) \lambda_0^\varepsilon Y^\varepsilon_k  \Bigg)\\
			&=\sum_{m=0}^{|\gamma|-2}(\lambda_0^\varepsilon)^{m} \!\!\!
\mathop{\sum_{\beta \in \N^{N};\,\beta\leq \gamma 
;}}_{\substack{|\gamma|-1-m\leq |\beta|\leq |\gamma|-1}}  \!\!\!\hat a^{(k_0)}_{\gamma-\beta} \Bigg( c^{(k_0)}_{m-(|\gamma|-1-|\beta|)}(\beta) 
 +\sum_{k\in \mathrm{supp}\, \beta} d^{(k_0)}_{m-(|\gamma|-1-|\beta|),k}(\beta) \frac{\lambda_0^\varepsilon}{K_k^\varepsilon} Y^\varepsilon_k+ \sum_{k=1}^N   \ell^{(k_0)}_{m-(|\gamma|-1-|\beta|),k}(\beta)  \lambda_0^\varepsilon Y^\varepsilon_k  \Bigg)
\end{align}
where we also switched the orders of summation.
Identifying the different powers of $\lambda_0^\varepsilon$ allows to establish the formulas \eqref{cn}, \eqref{dn} and \eqref{ln} and the proof is thus complete.
\end{proof}

\begin{lem}\label{recd0}
Let $k_0\in \llbracket 1,N \rrbracket$ and $\gamma \in \N^N$ such that $\gamma_{k_0}=0$.
For all $k,j\in \llbracket 1,N \rrbracket \backslash \{k_0\} $, it holds 
$$d^{(k_0)}_{0,k}(\gamma+e_k)=d^{(k_0)}_{0,j}(\gamma+e_j).$$
Moreover, these coefficients are positive.
\end{lem}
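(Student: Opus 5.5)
The plan is to get a closed formula for $d^{(k_0)}_{0,k}(\gamma)$ by specialising the recursion \eqref{dn} of Lemma~\ref{cdl} to $n=0$. The resulting expression will turn out to be symmetric in $k$ once we evaluate it at $\gamma+e_k$, which gives both claims at once.

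\emph{Step 1: the recursion at $n=0$.} Take $|\gamma|\ge 2$ with $\gamma_{k_0}=0$ and $k\in\mathrm{supp}\,\gamma$. With $n=0$, the constraint $|\gamma|-1-n\le|\beta|\le|\gamma|-1$ in \eqref{dn} forces $|\beta|=|\gamma|-1$, so the index $n-(|\gamma|-1-|\beta|)$ equals $0$. Hence $\gamma-\beta=e_i$ for some $i\in\mathrm{supp}\,\gamma$. The coefficient is then $\hat a^{(k_0)}_{e_i}=a_i=\pi/|\Omega|$ by Lemma~\ref{lemcomp}, and $i\neq k_0$ automatically. We get
\begin{align}
d^{(k_0)}_{0,k}(\gamma)=\frac{\pi}{|\Omega|}\sum_{i:\ \gamma_i\ge 1,\ k\in\mathrm{supp}(\gamma-e_i)} d^{(k_0)}_{0,k}(\gamma-e_i),
\end{align}
with initial value $d^{(k_0)}_{0,k}(e_k)=1$ from \eqref{cdl0}. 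Every $\beta\le\gamma$ also has $\beta_{k_0}=0$, so the recursion stays inside the admissible class of multi-indices.

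\emph{Step 2: solve the recursion.} Unrolling it, $d^{(k_0)}_{0,k}(\gamma)=(\pi/|\Omega|)^{|\gamma|-1}P_k(\gamma)$. Here $P_k(\gamma)$ is the number of lattice paths from $e_k$ to $\gamma$ that add one unit vector at each step and keep $k$ in the support. Since the paths start at $e_k$ and coordinates only increase, the support condition is automatic. So $P_k(\gamma)$ is the multinomial coefficient
\begin{align}
P_k(\gamma)=\frac{(|\gamma|-1)!}{(\gamma_k-1)!\prod_{i\neq k}\gamma_i!}.
\end{align}
To be safe we check this by induction on $|\gamma|$ using the recursion of Step 1. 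This reduces to the standard Pascal-type identity $\sum_i P_k(\gamma-e_i)=P_k(\gamma)$. In that identity the term $i=k$ is dropped when $\gamma_k=1$ and $\gamma\neq e_k$, and the formula gives $0$ there, so the two are consistent.

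\emph{Step 3: conclude.} Evaluating at $\gamma+e_k$ with $\gamma_{k_0}=0$ and $k\neq k_0$ gives
\begin{align}
d^{(k_0)}_{0,k}(\gamma+e_k)=\Big(\frac{\pi}{|\Omega|}\Big)^{|\gamma|}\frac{|\gamma|!}{\prod_{i}\gamma_i!}.
\end{align}
This does not depend on $k$, which proves the equality with $d^{(k_0)}_{0,j}(\gamma+e_j)$, and it is clearly positive. The case $\gamma=0$ is just \eqref{cdl0}. The only delicate point is Step 1: we must check carefully that the $n=0$ case of \eqref{dn} is allowed, which holds because $n\in\llbracket 0,|\gamma|-2\rrbracket$ when $|\gamma|\ge2$. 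We must also check that no $n=|\gamma|-1$ formula from \eqref{gamma-1}, such as $\hat b$, enters. This is so because for $|\gamma|\ge 2$ we have $|\gamma|-1\neq 0$.
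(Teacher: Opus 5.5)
Your proof is correct, but it takes a different route from the paper's. Both arguments rest on the same reduction: at $n=0$ the recursion \eqref{dn} only sees $|\beta|=|\gamma|-1$, with constant weight $\hat a^{(k_0)}_{e_i}=\pi/|\Omega|$.

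The paper never computes the coefficients. It argues by induction on $|\gamma|$. After the change of index $\alpha=\beta-e_k$, one has $d^{(k_0)}_{0,k}(\gamma+e_k)=\frac{\pi}{|\Omega|}\sum_{\alpha\le\gamma,\,|\alpha|=|\gamma|-1}d^{(k_0)}_{0,k}(\alpha+e_k)$. The induction hypothesis then lets one replace $k$ by $j$ inside the sum, and positivity is inherited because this is a positive combination of positive terms.

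You instead solve the recursion exactly by counting monotone lattice paths from $e_k$. This gives $d^{(k_0)}_{0,k}(\gamma+e_k)=(\pi/|\Omega|)^{|\gamma|}\,|\gamma|!/\prod_i\gamma_i!$, from which independence of $k$ and positivity are immediate.

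The paper's route is shorter and needs no combinatorics. Yours costs a Pascal-type identity but yields more, namely the explicit value of $\tilde d^{(k_0)}(\gamma)$. By the multinomial theorem this gives $\sum_{\gamma_{k_0}=0,\,|\gamma|=M}\tilde d^{(k_0)}(\gamma)(K^\varepsilon)^\gamma=\big(\frac{\pi}{|\Omega|}(\overline{K^\varepsilon}-K^\varepsilon_{k_0})\big)^M$. Lemma~\ref{Dinv} then reduces to the identity $a^M-b^M=(a-b)\sum_{s=0}^{M-1}a^{M-1-s}b^s$ with $a=\overline{K^\varepsilon}$ and $b=\overline{K^\varepsilon}-K^\varepsilon_{k_0}$. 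The determinant $p_M$ of Lemma~\ref{calcdet} also becomes fully explicit in terms of $\overline{K^\varepsilon}$ and the partial sums $\overline{K^\varepsilon}-K^\varepsilon_{k}$.
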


\begin{proof}

Let us proceed by induction on $|\gamma|\geq 0$. When $|\gamma|=0$, the result clearly holds true since $d^{(k_0)}_{0,k}(e_k)=1>0$ thanks to \eqref{cdl0}.

Let $\gamma\in \N^N$ now such that $|\gamma|\geq 1$ and suppose that the result holds true for all $\beta \in \N^N$ with $|\beta|<|\gamma|$.
We have thanks to \eqref{dn} and the change of index $\alpha=\beta-e_k$
\begin{align}
d^{(k_0)}_{0,k}(\gamma+e_k)&=\mathop{\sum_{\beta \in \N^{N};\,\beta\leq \gamma+e_k 
; \, k\in \mathrm{supp}\, \beta}}_{\substack{|\beta|= |\gamma|}} \hat a^{(k_0)}_{\gamma+e_k-\beta}\, d^{(k_0)}_{0,k}(\beta)\\
		&=\mathop{\sum_{\alpha \in \N^{N};\,\alpha\leq \gamma 
}}_{\substack{|\alpha|= |\gamma|-1}} \hat a^{(k_0)}_{\gamma-\alpha}\, d^{(k_0)}_{0,k}(\alpha+e_k)\\
		&=\mathop{\sum_{\alpha \in \N^{N};\,\alpha\leq \gamma 
}}_{\substack{|\alpha|= |\gamma|-1}} \frac{\pi}{|\Omega|}\,
           d^{(k_0)}_{0,j}(\alpha+e_j)\label{eq:dpositive}\\
  &=d^{(k_0)}_{0,j}(\gamma+e_j)
\end{align}
by the induction hypothesis, where we used, in the last
equality, the fact that there exists $\ell$ such that $\hat
  a^{(k_0)}_{\gamma-\alpha}=\hat a^{(k_0)}_{\ell}$ and thus $\hat
  a^{(k_0)}_{\gamma-\alpha}=\frac{\pi}{|\Omega|}$ (see Lemma~\ref{lemcomp}). Notice that
  from~\eqref{eq:dpositive}, $d^{(k_0)}_{0,k}(\gamma+e_k)>0$ thanks to
  the induction hypothesis.
This completes the proof.
\end{proof}

\begin{lem}\label{syst}
For all $M\geq 1$, there exist a matrix $A_M^\varepsilon\in \mathcal M_N(\R)$ and a vector $B_M^\varepsilon\in \R^N$ 
such that 
\begin{enumerate}[label=\alph*)]
\item The vector $Y^\varepsilon=(Y^\varepsilon_{k_0})_{k_0\in \llbracket 1, N \rrbracket}$ solves the system $A_M^\varepsilon Y^\varepsilon=B_M^\varepsilon$. \label{a}
\item $A_M^\varepsilon$ and $B_M^\varepsilon$ both admit some expansion of the form
\begin{align}\label{devAetB}
A_M^\varepsilon = \sum_{m=M+1}^{2M} A^\varepsilon_{M,m} +O\left(|K^\varepsilon|^{2M+1}\right) \qquad \text{and} \qquad B_M^\varepsilon = \sum_{m=M+1}^{2M} B^\varepsilon_{M,m} +O\left(|K^\varepsilon|^{2M+1}\right)
\end{align}
where each entry of $A^\varepsilon_{M,m}$ and $B^\varepsilon_{M,m}$
belongs to $\mathscr P^{hom}_m(K^\varepsilon)$ (which is the set of
homogeneous polynomial of $(K^\varepsilon_1,\ldots, K^\varepsilon_N)$ of degree $m$). \label{b}
\item The leading terms (namely the ones homogeneous with degree $M+1$) are
\begin{equation}\label{AMprem}
  \left(A^\varepsilon_{M,M+1}\right)_{k_0,k}=\left(  \frac{\pi}{|\Omega|}\overline{K^\varepsilon} \right)^{M+1}  \delta_{k_0,k} + \frac{\pi}{|\Omega|} K^\varepsilon_{k_0}\mathop{\sum_{\gamma \in \N^{N};\gamma_{k_0}=0, \gamma_k\neq 0}}_{\substack{1\leq |\gamma|\leq M}} (K^\varepsilon)^{\gamma-e_k} \left(\frac{\pi}{|\Omega|}\overline{K^\varepsilon}\right)^{M+1-|\gamma|}  d^{(k_0)}_{0,k}(\gamma)
\end{equation}
and
\begin{align}\label{BMprem}
\left( B^\varepsilon_{M,M+1} \right)_{k_0}= \frac{\pi}{|\Omega|} K^\varepsilon_{k_0}\left( \left(  \frac{\pi}{|\Omega|}\overline{K^\varepsilon} \right)^M -\mathop{\sum_{\gamma \in \N^{N};\gamma_{k_0}=0}}_{\substack{1\leq |\gamma|\leq M}}  (K^\varepsilon)^\gamma \left(\frac{\pi}{|\Omega|}\overline{K^\varepsilon}\right)^{M-|\gamma|}  c^{(k_0)}_0(\gamma)  \right) .  
\end{align}
 \label{c}
\end{enumerate}
\end{lem}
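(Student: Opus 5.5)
Fix $k_0$ and test the eigenvalue equation for $u_0^\varepsilon$ against the modified quasimode $\hat\varphi^\varepsilon_{k_0}$ from~\eqref{hatphi}. Since $\hat\varphi^\varepsilon_{k_0}$ is built only from $f_k^\varepsilon$ with $k\neq k_0$ and from the $\hat v_\gamma^{(k_0)}$, the Dirichlet conditions fail only at $\Gamma^\varepsilon_{k_0}$. There, by Corollary~\ref{corf0} and \eqref{defK}, $\hat\varphi^\varepsilon_{k_0}\equiv\hat\varphi^{(k_0)}$, a constant (up to $O(|K^\varepsilon|^\infty)$) which is a polynomial in $K^\varepsilon$ with leading term $1$. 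On $\Gamma^\varepsilon_k$ with $k\neq k_0$, Lemmas~\ref{coeffcercle} and~\ref{resolcercle} (applied to the sub-problem without window $k_0$) give $\hat\varphi^\varepsilon_{k_0}=O(|K^\varepsilon|^M)$. Green's formula, as in the proof of Theorem~\ref{lam0}, gives
\[
(\lambda_0^\varepsilon-\hat\lambda_{0,k_0}^\varepsilon)\langle\hat\varphi^\varepsilon_{k_0},u_0^\varepsilon\rangle=\langle(-\Delta-\hat\lambda^\varepsilon_{0,k_0})\hat\varphi^\varepsilon_{k_0},u_0^\varepsilon\rangle-\sum_{j}\langle\hat\varphi^\varepsilon_{k_0},\partial_nu_0^\varepsilon\rangle_{\Gamma_j^\varepsilon}.
\]
Rather than estimating $\langle\hat\varphi^\varepsilon_{k_0},u_0^\varepsilon\rangle$ crudely, I would expand it exactly. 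Multiplying by $(\lambda_0^\varepsilon)^M$, each term $(\lambda_0^\varepsilon)^M(K^\varepsilon)^\gamma\langle\hat f_\gamma^{\varepsilon,(k_0)},u_0^\varepsilon\rangle$ is rewritten with Lemma~\ref{cdl} as $(\lambda_0^\varepsilon)^{M-|\gamma|}(K^\varepsilon)^\gamma$ times an expression that is affine in $Y^\varepsilon$. The boundary term on $\Gamma^\varepsilon_{k_0}$ contributes $-\lambda_0^\varepsilon\hat\varphi^{(k_0)}Y^\varepsilon_{k_0}$, and the other windows contribute $O(\lambda_0^\varepsilon|K^\varepsilon|^M)Y_j^\varepsilon$ terms, which are kept exactly as linear terms using the precise expansion of Lemma~\ref{coeffcercle}.

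Collecting everything yields an identity that is affine in $Y^\varepsilon$: row $k_0$ of $A_M^\varepsilon Y^\varepsilon=B_M^\varepsilon$. The next step is to replace each occurrence of $\lambda_0^\varepsilon$ by $\tilde\lambda_0^\varepsilon+O(|K^\varepsilon|^{M+1})$ using Theorem~\ref{lam0}. Then $\lambda_0^\varepsilon/K_k^\varepsilon$ becomes $\tilde\lambda_0^\varepsilon/K_k^\varepsilon$, which is polynomial because every monomial of $\tilde\lambda_0^\varepsilon$ arises from $\gamma$ with $\gamma_k\ne0$ after the induced factoring. Here I would multiply the row by $K_{k_0}^\varepsilon$, or by suitable powers, to clear denominators; this is why the leading terms carry $K^\varepsilon_{k_0}$. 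The error from using $\tilde\lambda$ must be pushed to order $2M+1$. Since $Y^\varepsilon$ is bounded (it is a probability vector), this holds provided each substituted $\lambda_0^\varepsilon$ multiplies a factor of order $|K^\varepsilon|^M$, which is the case by homogeneity counting.

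Part \ref{c} follows by retaining only the leading homogeneous parts, namely $\lambda_0^\varepsilon\approx\frac{\pi}{|\Omega|}\overline{K^\varepsilon}$ from~\eqref{lam0first}, $\hat a\approx\frac{\pi}{|\Omega|}$, and the coefficients $c_0$ and $d_{0,k}$. The diagonal term $(\frac{\pi}{|\Omega|}\overline{K^\varepsilon})^{M+1}$ comes from the $Y_{k_0}$ boundary term times $(\lambda_0^\varepsilon)^M$. The terms with $d_{0,k}(\gamma)$ come from the $\frac{\lambda_0^\varepsilon}{K_k^\varepsilon}Y_k$ contributions in Lemma~\ref{cdl}. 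The $c_0$ terms go into $B$, along with the constant $(\frac{\pi}{|\Omega|}\overline{K^\varepsilon})^M$ coming from $\lambda_0^\varepsilon\langle 1,u_0^\varepsilon\rangle$.

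I expect the main obstacle to be the bookkeeping of degrees. I need to check that the $\ell$ terms and the terms with $n\ge1$ in Lemma~\ref{cdl} are genuinely of degree at least $M+2$ after normalization. I also need to confirm that the overall normalization, multiplying by $(\lambda_0^\varepsilon)^M K_{k_0}^\varepsilon$, produces exactly degrees $M+1,\dots,2M$ with no negative powers of $K^\varepsilon$ surviving. My first attempt would be to track a weight $\deg(\lambda)=1$, $\deg(1/K_k)=-1$ through each term of Lemma~\ref{cdl}, and to verify it inductively via \eqref{cn}--\eqref{gamma-1}.
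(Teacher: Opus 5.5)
Your overall route matches the paper's. You test against $\hat\varphi^\varepsilon_{k_0}$, apply Green's formula, multiply by $(\lambda_0^\varepsilon)^M$, expand $\langle\hat\varphi^\varepsilon_{k_0},u_0^\varepsilon\rangle$ with Lemma~\ref{cdl}, and read off leading parts. Keeping the boundary terms on $\Gamma^\varepsilon_k$, $k\neq k_0$, exactly is harmless but unnecessary: after multiplication by $(\lambda_0^\varepsilon)^M$ they are $O(|K^\varepsilon|^{2M+1})$, which is why the paper just bounds them with \eqref{lambdaesp}.

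The gap is in how you handle the factors $1/K^\varepsilon_k$ and where you locate the factor $K^\varepsilon_{k_0}$.

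\textbf{The denominators.} Your claim that $\tilde\lambda_0^\varepsilon/K^\varepsilon_k$ is a polynomial is false: its leading part is $\frac{\pi}{|\Omega|}\overline{K^\varepsilon}/K^\varepsilon_k$. Moreover, substituting $\lambda_0^\varepsilon=\tilde\lambda_0^\varepsilon+O(|K^\varepsilon|^{M+1})$ inside $\lambda_0^\varepsilon/K^\varepsilon_k$ leaves an error $O(|K^\varepsilon|^{M+1})/K^\varepsilon_k$. That error is not controlled when $K^\varepsilon_k\ll|K^\varepsilon|$.

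Multiplying row $k_0$ by $K^\varepsilon_{k_0}$ cannot repair this. The denominators are $K^\varepsilon_k$ with $k\in\mathrm{supp}\,\gamma$, hence $k\neq k_0$. It would also break the statement: the diagonal leading term would become $K^\varepsilon_{k_0}(\frac{\pi}{|\Omega|}\overline{K^\varepsilon})^{M+1}$, of degree $M+2$. That contradicts \eqref{AMprem}, item b), and your own account of the diagonal term.

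What you need instead is that every $\frac{\lambda_0^\varepsilon}{K^\varepsilon_k}Y^\varepsilon_k$ from Lemma~\ref{cdl} arrives multiplied by $(K^\varepsilon)^\gamma$ with $\gamma_k\geq1$. So it equals $(K^\varepsilon)^{\gamma-e_k}(\lambda_0^\varepsilon)^{M+n+1-|\gamma|}d^{(k_0)}_{n,k}(\gamma)Y^\varepsilon_k$, a monomial times a nonnegative power of $\lambda_0^\varepsilon$. Only then is $\lambda_0^\varepsilon$ expanded via Theorem~\ref{lam0}. The normalization is $(\lambda_0^\varepsilon)^M$ and nothing else.

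\textbf{The factor $K^\varepsilon_{k_0}$.} In \eqref{AMprem}--\eqref{BMprem} it comes from $\hat\lambda^\varepsilon_{0,k_0}-\tilde\lambda_0^\varepsilon$, which multiplies the whole expansion of $(\lambda_0^\varepsilon)^M\langle\hat\varphi^\varepsilon_{k_0},u_0^\varepsilon\rangle$. (Replacing $\lambda_0^\varepsilon$ by $\tilde\lambda_0^\varepsilon$ in $\hat\lambda^\varepsilon_{0,k_0}-\lambda_0^\varepsilon$ costs only $O(|K^\varepsilon|^{M+1})\cdot O(|K^\varepsilon|^{M})$.)

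Since $\hat a^{(k_0)}_\gamma=a_\gamma$ for $\gamma_{k_0}=0$, one has $\tilde\lambda_0^\varepsilon-\hat\lambda^\varepsilon_{0,k_0}=\sum_{1\le|\gamma|\le M,\,\gamma_{k_0}\geq 1}a_\gamma(K^\varepsilon)^\gamma$. By Lemma~\ref{lemcomp}, its degree-one part is $\frac{\pi}{|\Omega|}K^\varepsilon_{k_0}$.

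Your proposal never identifies this, yet it is exactly what produces both pieces of the leading terms:
\begin{itemize}
\item the off-diagonal part of \eqref{AMprem}, from the $d^{(k_0)}_{0,k}$ terms;
\item \eqref{BMprem}, from $(\lambda_0^\varepsilon)^M\langle\1,u_0^\varepsilon\rangle=-(\lambda_0^\varepsilon)^M$ (not $\lambda_0^\varepsilon\langle\1,u_0^\varepsilon\rangle$) together with the $c^{(k_0)}_0$ terms.
\end{itemize}

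With this factor identified, the degree count you flagged as the main obstacle is immediate. Before multiplication by this degree-$\geq1$ factor:
\begin{itemize}
\item the $c_0$, $d_{0,k}$ and $-(\lambda_0^\varepsilon)^M$ terms have degree exactly $M$;
\item the $c_n$, $d_{n,k}$ terms with $n\geq1$, and all the $\ell_{n,k}$ terms, have degree at least $M+1$.
\end{itemize}
Meanwhile the diagonal term, $(\lambda_0^\varepsilon)^{M+1}$ times your constant $\hat\varphi^{(k_0)}$, has degree $M+1$.
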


Here and in the following, $\mathcal M_N(\R)$ denotes the set of $N
\times N$ matrices with real entries. Notice that in~\eqref{AMprem}, the second term is zero for $k=k_0$
(since the sum is over an empty set).  
\begin{proof}
Let $k_0\in \llbracket 1,N \rrbracket$.
According to Proposition \ref{propphi}, Corollary \ref{corf0} and \eqref{uholder}, the function $\hat \varphi^\varepsilon_{k_0}$ satisfies 
\begin{align}\label{devhatphi}
\hat\varphi^\varepsilon_{k_0}
& =1+O(|K^\varepsilon|) \qquad \qquad \qquad \text{in }L^2(\Omega),
\end{align}
and
\begin{align}\label{devhatphi2}
\hat\varphi^\varepsilon_{k_0} 
& =\hat \alpha_{k_0}^\varepsilon \1_{\Gamma_{k_0}^\varepsilon} +O(|K^\varepsilon|^M) \qquad \text{uniformly on }\Gamma_{\mathcal D}^\varepsilon,
\end{align}
where the constant $\hat \alpha_{k_0}^\varepsilon$ is defined by
\begin{align}\label{hatalpha}
\hat \alpha_{k_0}^\varepsilon= 1+  \mathop{\sum_{\gamma \in \N^{N}; \gamma_{k_0}=0}}_{\substack{1\leq |\gamma|\leq M}} (K^\varepsilon)^\gamma \hat f_\gamma^{0,(k_0)}(x^{(k_0)}).
\end{align}
Using also \eqref{crochetbord2}, this allows to write
\begin{align}
Y^\varepsilon_{k_0}
		&=\frac{1}{\lambda_0^\varepsilon \, \hat\alpha_{k_0}^\varepsilon}\langle \partial_n u_0^\varepsilon , \hat\varphi_{k_0}^\varepsilon \rangle_{\Gamma_{\mathcal D}^\varepsilon}+O\left( |K^\varepsilon|^M \right) \qquad \text{using \eqref{lambdaesp} for the error term}\\
		&=\frac{1}{\lambda_0^\varepsilon \, \hat\alpha_{k_0}^\varepsilon}  \Big(  \langle u_0^\varepsilon , -\Delta \hat\varphi_{k_0}^\varepsilon \rangle - \langle -\Delta  u_0^\varepsilon ,  \hat\varphi_{k_0}^\varepsilon \rangle    \Big)+O\left( |K^\varepsilon|^M \right)\qquad \text{by Proposition \ref{propphi} item \ref{phiNeum}}\\
		&=\frac{\hat\lambda_{0,k_0}^\varepsilon- \lambda_0^\varepsilon}{\lambda_0^\varepsilon \, \hat\alpha_{k_0}^\varepsilon}   \langle u_0^\varepsilon ,  \hat\varphi_{k_0}^\varepsilon \rangle +O\Big( |K^\varepsilon|^{M} \Big) \qquad \text{by Proposition \ref{propphi} item \ref{phiDelta}} \\
		&=\frac{\hat \lambda_{0,k_0}^\varepsilon- \tilde \lambda_0^\varepsilon}{\lambda_0^\varepsilon \, \hat\alpha_{k_0}^\varepsilon}   \langle u_0^\varepsilon ,  \hat\varphi_{k_0}^\varepsilon \rangle +O\Big( |K^\varepsilon|^{M} \Big)\label{calcX}
\end{align}
where the last equality relies on Theorem \ref{lam0} and Lemma \ref{phi,u}.
Now, thanks to \eqref{u,1} and Lemma \ref{cdl}, we have
\begin{align}
&   (\lambda_0^\varepsilon)^M           \langle u_0^\varepsilon ,  \hat\varphi_{k_0}^\varepsilon\rangle=-(\lambda_0^\varepsilon)^M+ \mathop{\sum_{\gamma \in \N^{N};\gamma_{k_0}=0}}_{\substack{1\leq |\gamma|\leq M}} (K^\varepsilon)^\gamma (\lambda_0^\varepsilon)^{M-|\gamma|} (\lambda_0^\varepsilon)^{|\gamma|} \langle  \hat f^{\varepsilon,(k_0)}_\gamma ,  u_0^\varepsilon \rangle \\
		&=-(\lambda_0^\varepsilon)^M+ \mathop{\sum_{\gamma \in \N^{N};\gamma_{k_0}=0}}_{\substack{1\leq |\gamma|\leq M}} (K^\varepsilon)^\gamma (\lambda_0^\varepsilon)^{M-|\gamma|}  \sum_{n=0}^{|\gamma|-1}(\lambda_0^\varepsilon)^{n} \left( c^{(k_0)}_n(\gamma)+\sum_{k\in \mathrm{supp}\, \gamma} d^{(k_0)}_{n,k}(\gamma) \frac{\lambda_0^\varepsilon}{K_k^\varepsilon} Y^\varepsilon_k + \sum_{k=1}^N   \ell^{(k_0)}_{n,k}(\gamma) \lambda_0^\varepsilon Y^\varepsilon_k  \right)+O\left( |K^\varepsilon|^\infty \right)\\
		&=-(\lambda_0^\varepsilon)^M+ \mathop{\sum_{\gamma \in \N^{N};\gamma_{k_0}=0}}_{\substack{1\leq |\gamma|\leq M}} \sum_{n=0}^{|\gamma|-1} (K^\varepsilon)^\gamma (\lambda_0^\varepsilon)^{M+n-|\gamma|} \,  c^{(k_0)}_n(\gamma) + \mathop{\sum_{\gamma \in \N^{N};\gamma_{k_0}=0}}_{\substack{1\leq |\gamma|\leq M}} \sum_{n=0}^{|\gamma|-1} (K^\varepsilon)^\gamma (\lambda_0^\varepsilon)^{M+n+1-|\gamma|}  \, \sum_{k=1}^N \ell^{(k_0)}_{n,k}(\gamma) Y^\varepsilon_k\\
		& \quad +  \mathop{\sum_{\gamma \in \N^{N};\gamma_{k_0}=0}}_{\substack{1\leq |\gamma|\leq M}} \sum_{n=0}^{|\gamma|-1}  \sum_{k\in \mathrm{supp}\, \gamma}  (K^\varepsilon)^{\gamma-e_k} (\lambda_0^\varepsilon)^{M+n+1-|\gamma|} \,  d^{(k_0)}_{n,k}(\gamma) Y^\varepsilon_k +O\left( |K^\varepsilon|^\infty \right)\\
		&=-(\lambda_0^\varepsilon)^M+ \mathop{\sum_{\gamma \in \N^{N};\gamma_{k_0}=0}}_{\substack{1\leq |\gamma|\leq M}} \sum_{n=0}^{|\gamma|-1} (K^\varepsilon)^\gamma (\lambda_0^\varepsilon)^{M+n-|\gamma|} \,  c^{(k_0)}_n(\gamma) +\sum_{k=1}^N \left( \mathop{\sum_{\gamma \in \N^{N};\gamma_{k_0}=0}}_{\substack{1\leq |\gamma|\leq M}} \sum_{n=0}^{|\gamma|-1} (K^\varepsilon)^\gamma (\lambda_0^\varepsilon)^{M+n+1-|\gamma|}  \,  \ell^{(k_0)}_{n,k}(\gamma) \right) Y^\varepsilon_k\\
		& \quad + \sum_{k=1}^N \left(  \mathop{\sum_{\gamma \in \N^{N};\gamma_{k_0}=0, \gamma_k\neq 0}}_{\substack{1\leq |\gamma|\leq M}} \sum_{n=0}^{|\gamma|-1}   (K^\varepsilon)^{\gamma-e_k} (\lambda_0^\varepsilon)^{M+n+1-|\gamma|} \,  d^{(k_0)}_{n,k}(\gamma)\right) Y^\varepsilon_k +O\left( |K^\varepsilon|^\infty \right). \label{u,hatphi}
\end{align}
Putting \eqref{calcX} and \eqref{u,hatphi} together shows that item \ref{a} holds true by setting the $(k_0,k)$ entry of $A_M^\varepsilon$ (resp. the $k_0$ entry of $B_M^\varepsilon$) to
\begin{align}\label{entreeA}
\text{ }\qquad  (\lambda_0^\varepsilon)^{M+1} \hat\alpha_{k_0}^\varepsilon \delta_{k_0,k}- \left(\hat \lambda_{0,k_0}^\varepsilon- \tilde \lambda_0^\varepsilon\right)& \left(  \mathop{\sum_{\gamma \in \N^{N};\gamma_{k_0}=0}}_{\substack{1\leq |\gamma|\leq M}} \sum_{n=0}^{|\gamma|-1} (K^\varepsilon)^\gamma (\lambda_0^\varepsilon)^{M+n+1-|\gamma|}  \,  \ell^{(k_0)}_{n,k}(\gamma)  \right. \\
			& \qquad  \left.+   \mathop{\sum_{\gamma \in \N^{N};\gamma_{k_0}=0, \gamma_k\neq 0}}_{\substack{1\leq |\gamma|\leq M}} \sum_{n=0}^{|\gamma|-1}   (K^\varepsilon)^{\gamma-e_k} (\lambda_0^\varepsilon)^{M+n+1-|\gamma|} \,  d^{(k_0)}_{n,k}(\gamma) \right)
\end{align}
resp.
\begin{align}\label{entreeB}
\left(\hat \lambda_{0,k_0}^\varepsilon- \tilde \lambda_0^\varepsilon\right)\left(  -(\lambda_0^\varepsilon)^M+ \mathop{\sum_{\gamma \in \N^{N};\gamma_{k_0}=0}}_{\substack{1\leq |\gamma|\leq M}} \sum_{n=0}^{|\gamma|-1} (K^\varepsilon)^\gamma (\lambda_0^\varepsilon)^{M+n-|\gamma|} \,  c^{(k_0)}_n(\gamma)  \right).
\end{align}
Besides, we have thanks to \eqref{hatalpha} and Theorem \ref{lam0} that $\hat\alpha_{k_0}^\varepsilon$ admits an expansion of order $M$ and that $\lambda_0^\varepsilon$ admits an expansion whose leading term is of degree 1.
Using also \eqref{hatlam}, this shows that \eqref{entreeA} admits an expansion of order $2M$ with leading term of order $M+1$ and that, similarly \eqref{entreeB} admits an expansion of order $2M$ with leading term of order $M+1$.
This proves item \ref{b}.\\
Finally, using \eqref{hatalpha} and Theorem \ref{lam0} again, we obtain that the leading term of $\lambda_0^\varepsilon$ is
\begin{align}\label{lam0prem}
\frac{\pi}{|\Omega|}\overline{K^\varepsilon},
\end{align}
the one of $\hat\alpha_{k_0}^\varepsilon$ is simply $1$, the one of $\hat \lambda_{0,k_0}^\varepsilon- \tilde \lambda_0^\varepsilon$ is 
$$-\frac{\pi}{|\Omega|} K^\varepsilon_{k_0}$$
while the one of
$$\mathop{\sum_{\gamma \in \N^{N};\gamma_{k_0}=0}}_{\substack{1\leq |\gamma|\leq M}} \sum_{n=0}^{|\gamma|-1} (K^\varepsilon)^\gamma (\lambda_0^\varepsilon)^{M+n+1-|\gamma|}  \,  \ell^{(k_0)}_{n,k}(\gamma)  +   \mathop{\sum_{\gamma \in \N^{N};\gamma_{k_0}=0, \gamma_k\neq 0}}_{\substack{1\leq |\gamma|\leq M}} \sum_{n=0}^{|\gamma|-1}   (K^\varepsilon)^{\gamma-e_k} (\lambda_0^\varepsilon)^{M+n+1-|\gamma|} \,  d^{(k_0)}_{n,k}(\gamma)$$
is obtained using \eqref{lam0prem} by keeping only the terms of degree $M$ and amounts to
$$ \mathop{\sum_{\gamma \in \N^{N};\gamma_{k_0}=0, \gamma_k\neq 0}}_{\substack{1\leq |\gamma|\leq M}} (K^\varepsilon)^{\gamma-e_k} \left(\frac{\pi}{|\Omega|}\overline{K^\varepsilon}\right)^{M+1-|\gamma|} \,  d^{(k_0)}_{0,k}(\gamma).$$
This yields \eqref{AMprem}.
The expression \eqref{BMprem} can be established in a very similar fashion, allowing to conclude the proof of item \ref{c} and of the Lemma.
\end{proof}
\hop
As one can expect, the invertibility of the matrix $A^\varepsilon_{M,M+1}$ (and thus the value of its determinant) will play a crucial role in solving the system
$$A_M^\varepsilon Y^\varepsilon=B_M^\varepsilon.$$
We will denote
$$p_M(K^\varepsilon)= \det A^\varepsilon_{M,M+1}\in \mathscr P^{hom}_{(M+1)N}(K^\varepsilon)$$
and we also define $\tilde d^{(k_0)}$ by 
\begin{align}\label{dtilde}
\forall \beta\in \N^N \text{ with } \beta_{k_0}=0, \qquad \tilde d^{(k_0)} (\beta) =  d^{(k_0)}_{0,k} (\beta+e_k) 
\end{align}
which is well defined, independent of $k\neq k_0$, and positive thanks to Lemma \ref{recd0}:
\begin{align}\label{dtildepos}
\tilde d^{(k_0)} (\beta) >0.
\end{align}

\begin{lem}\label{calcdet}
Let us recall that $p_M(K^\varepsilon)= \det A^\varepsilon_{M,M+1}\in
\mathscr P^{hom}_{(M+1)N}(K^\varepsilon)$. We have
\begin{align}\label{pM}
p_M(K^\varepsilon)&= \left( \frac{\pi}{|\Omega|} \overline{K^\varepsilon} \right)^N  \prod_{k_0= 1}^N  \mathop{\sum_{\gamma \in \N^{N};\gamma_{k_0}=0}}_{\substack{ |\gamma|= M}}
 \tilde d^{(k_0)}(\gamma) (K^\varepsilon)^{\gamma} \\
		&\quad +  \left(\frac{\pi}{|\Omega|}\right)^{N+1}    \overline{K^\varepsilon}^N \sum_{k_0= 1}^N   \mathop{\sum_{\gamma \in \N^{N};\gamma_{k_0}=1}}_{\substack{1\leq |\gamma|\leq M}} (K^\varepsilon)^{\gamma} \left(\frac{\pi}{|\Omega|}\overline{K^\varepsilon}\right)^{M-|\gamma|}  
 \tilde d^{(k_0)}
(\gamma-e_{k_0})  \prod_{k\neq k_0}  \mathop{\sum_{\gamma \in \N^{N};\gamma_{k}=0}}_{\substack{ |\gamma|= M}}
 \tilde d^{(k)}(\gamma) (K^\varepsilon)^{\gamma}.
\end{align}
\end{lem}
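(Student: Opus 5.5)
The plan is to exploit the special structure of $A^\varepsilon_{M,M+1}$ given by \eqref{AMprem}. Each off-diagonal entry turns out to depend only on the row index, so the matrix is a diagonal matrix plus a rank-one matrix, and its determinant follows from the matrix determinant lemma. Throughout, write $c=\frac{\pi}{|\Omega|}$ and $L=\overline{K^\varepsilon}$.

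\emph{Step 1: the off-diagonal entries depend only on $k_0$.} Fix $k\neq k_0$. In the sum in \eqref{AMprem}, substitute $\beta=\gamma-e_k$ and use the definition \eqref{dtilde}, $d^{(k_0)}_{0,k}(\beta+e_k)=\tilde d^{(k_0)}(\beta)$. This gives
\begin{equation*}
(A^\varepsilon_{M,M+1})_{k_0,k}=c\,K^\varepsilon_{k_0}\,g_{k_0},\qquad g_{k_0}=\sum_{\beta_{k_0}=0,\ |\beta|\le M-1}(K^\varepsilon)^\beta (cL)^{M-|\beta|}\tilde d^{(k_0)}(\beta),
\end{equation*}
which does not depend on $k$ (this is exactly what Lemma~\ref{recd0} provides). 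Set $u_{k_0}=cK^\varepsilon_{k_0}g_{k_0}$ and $x_{k_0}=(cL)^{M+1}-u_{k_0}$, and let $\mathbf 1$ be the all-ones vector. Then
\begin{equation*}
A^\varepsilon_{M,M+1}=\mathrm{diag}(x_{k_0})+u\,\mathbf 1^T,
\end{equation*}
and therefore
\begin{equation*}
\det A^\varepsilon_{M,M+1}=\prod_{j}x_j+\sum_{i}u_i\prod_{j\neq i}x_j .
\end{equation*}
This identity is polynomial in the entries, so no invertibility of the $x_j$ is needed.

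\emph{Step 2: closed forms for the sums of $\tilde d^{(k_0)}$.} The proof of Lemma~\ref{recd0} (see \eqref{eq:dpositive}) shows that
\begin{equation*}
\tilde d^{(k_0)}(\gamma)=c\sum_{j\in\mathrm{supp}\,\gamma}\tilde d^{(k_0)}(\gamma-e_j),\qquad \tilde d^{(k_0)}(0)=1 .
\end{equation*}
Define $Q^{(k_0)}_m=\sum_{|\beta|=m,\ \beta_{k0}=0}\tilde d^{(k_0)}(\beta)(K^\varepsilon)^\beta$. Substituting the recursion gives $Q^{(k_0)}_m=c(L-K^\varepsilon_{k_0})Q^{(k_0)}_{m-1}$, hence $Q^{(k_0)}_m=b^m$ with $b=c(L-K^\varepsilon_{k_0})$.

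\emph{Step 3: telescoping.} Let $a=cL$, so that $a-b=cK^\varepsilon_{k_0}$ and $g_{k_0}=\sum_{m=0}^{M-1}b^m a^{M-m}$. Then
\begin{equation*}
x_{k_0}=a^{M+1}-(a-b)\,a\sum_{m=0}^{M-1}b^m a^{M-1-m}=a^{M+1}-a\,(a^M-b^M)=cL\,Q^{(k_0)}_M .
\end{equation*}
Consequently $\prod_j x_j$ is the first line of \eqref{pM}.

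For the second line, reindex by $\gamma=\beta+e_{k_0}$ to get
\begin{equation*}
\sum_{\gamma_{k_0}=1,\ 1\le|\gamma|\le M}(K^\varepsilon)^\gamma (cL)^{M-|\gamma|}\tilde d^{(k_0)}(\gamma-e_{k_0})=\frac{K^\varepsilon_{k_0}\,g_{k_0}}{cL}.
\end{equation*}
Hence $u_{k_0}=c^2L$ times this sum. Combining with $\prod_{j\neq k_0}x_j=(cL)^{N-1}\prod_{k\neq k_0}Q^{(k)}_M$ yields exactly the second line of \eqref{pM}, with prefactor $c^{N+1}L^N$.

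The main obstacle is Step 2. One has to extract cleanly from Lemma~\ref{recd0} that every $\hat a^{(k_0)}_{\gamma-\alpha}$ with $|\gamma-\alpha|=1$ equals $c$, and that the sum over $\alpha$ reduces to the recursion over $\mathrm{supp}\,\gamma$. One must also check that the case $|\beta|=0$ is consistent with the normalization $d^{(k_0)}_{0,k}(e_k)=1$ from \eqref{cdl0}. Once this recursion is in hand, the remaining steps are routine algebra.
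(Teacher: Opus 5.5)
Your proof is correct. Its skeleton is the paper's. Both arguments show that the off-diagonal entries of $A^\varepsilon_{M,M+1}$ depend only on the row index, write the matrix (the paper uses its transpose) as a diagonal matrix plus a rank-one matrix, and apply the matrix determinant lemma. You differ from the paper in the two auxiliary steps.

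\emph{The determinant lemma.} The paper factors $(A^\varepsilon_{M,M+1})^\top=D_M^\varepsilon\big(\mathrm{Id}+(D_M^\varepsilon)^{-1}\1 (L_M^\varepsilon)^\top\big)$. This needs $D_M^\varepsilon$ to be invertible, which it gets from Lemma~\ref{Dinv} together with the positivity \eqref{dtildepos}. You use the polynomial identity $\det(\mathrm{diag}(x)+u\1^\top)=\prod_j x_j+\sum_i u_i\prod_{j\neq i}x_j$, so neither invertibility nor positivity is needed.

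\emph{The diagonal identity.} The identity $x_{k_0}=cL\,Q^{(k_0)}_M$ is exactly the content of Lemma~\ref{Dinv}. The paper proves it by induction on $M$, using the one-step recursion \eqref{dn} (with $n=0$) only at the last step. You instead sum that recursion over $|\beta|=m$ to get the closed form $Q^{(k_0)}_m=\big(c(L-K^\varepsilon_{k_0})\big)^m$, and then telescope a geometric sum.

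What each route buys: yours is shorter and says more. It shows that $\tilde d^{(k_0)}(\gamma)=c^{|\gamma|}\,|\gamma|!/\prod_j\gamma_j!$. It also makes $p_M$ fully explicit, namely $p_M=a^N\big[\prod_j b_j^M+\sum_i (a^M-b_i^M)\prod_{j\neq i}b_j^M\big]$ with $a=cL$ and $b_j=c(L-K^\varepsilon_j)$. The paper keeps everything in terms of the coefficients $\tilde d^{(k)}$. That suffices for its later use in Lemma~\ref{minodet}, where only their positivity matters.
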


\begin{proof}
We are first going to show that all the entries of $A^\varepsilon_{M,M+1}$ which are on the same line and outside the diagonal are equal.
Indeed, let $k_0
\in \llbracket 1,N \rrbracket$ and $k \in \llbracket 1,N \rrbracket \backslash \{k_0\}$.
We have thanks to \eqref{AMprem} and the successive changes of indices $\beta=\gamma-e_k$  and $\gamma=\beta+e_{k_0}$
\begin{align}
\left( A^\varepsilon_{M,M+1}\right)_{k_0,k}
&=\frac{\pi}{|\Omega|} K^\varepsilon_{k_0}
  \mathop{\sum_{\gamma \in \N^{N};\gamma_{k_0}=0, \gamma_k\neq 0}}_{\substack{1\leq |\gamma|\leq M}} (K^\varepsilon)^{\gamma-e_k} \left(\frac{\pi}{|\Omega|}\overline{K^\varepsilon}\right)^{M+1-|\gamma|}  d^{(k_0)}_{0,k}(\gamma)
\\
		&=\frac{\pi}{|\Omega|}  \mathop{\sum_{\beta \in \N^{N};\beta_{k_0}=0}}_{\substack{0\leq |\beta|\leq M-1}} (K^\varepsilon)^{\beta+e_{k_0}} \left(\frac{\pi}{|\Omega|}\overline{K^\varepsilon}\right)^{M-|\beta|}  
 d^{(k_0)}_{0,k}(\beta+e_k) 
\\
		&=\frac{\pi}{|\Omega|}  \mathop{\sum_{\gamma \in \N^{N};\gamma_{k_0}=1}}_{\substack{1\leq |\gamma|\leq M}} (K^\varepsilon)^{\gamma} \left(\frac{\pi}{|\Omega|}\overline{K^\varepsilon}\right)^{M+1-|\gamma|}  
 \tilde d^{(k_0)}
(\gamma-e_{k_0}) \label{=horsdiag}
\end{align}
where $\tilde d^{(k_0)}$ is defined in \eqref{dtilde}.
It will now be more convenient to work with $(A^\varepsilon_{M,M+1})^\top$ rather than $A^\varepsilon_{M,M+1}$ itself.
The identity \eqref{=horsdiag} shows that $(A^\varepsilon_{M,M+1})^\top$ can be decomposed as 
\begin{align}\label{decAt}
(A^\varepsilon_{M,M+1})^\top=D_M^\varepsilon+\tilde A^\varepsilon_{M}
\end{align}
where $D_M^\varepsilon$ is the diagonal matrix
\begin{align}\label{DM}
\text{ }\qquad D_M^\varepsilon= \frac{\pi}{|\Omega|}\overline{K^\varepsilon}\; \mathrm{diag} \left( \left(  \frac{\pi}{|\Omega|}\overline{K^\varepsilon} \right)^{M}- \frac{\pi}{|\Omega|}  \mathop{\sum_{\gamma \in \N^{N};\gamma_{k_0}=1}}_{\substack{1\leq |\gamma|\leq M}} (K^\varepsilon)^{\gamma} \left(\frac{\pi}{|\Omega|}\overline{K^\varepsilon}\right)^{M-|\gamma|}  
 \tilde d^{(k_0)}
(\gamma-e_{k_0})  \right)_{1\leq k_0\leq N}
\end{align}
and $\tilde A_M^\varepsilon$ is a rank 1 matrix which can be represented as
\begin{align}\label{1LM}
\tilde A_M^\varepsilon=\begin{pmatrix} (L_M^\varepsilon)^\top\\ \vdots \\ (L_M^\varepsilon)^\top
\end{pmatrix}=\1 (L_M^\varepsilon)^\top
\end{align}
with $L_M^\varepsilon\in \R^N$ given by
\begin{align}\label{LM}
L_M^\varepsilon= \left(\frac{\pi}{|\Omega|}  \mathop{\sum_{\gamma \in \N^{N};\gamma_{k_0}=1}}_{\substack{1\leq |\gamma|\leq M}} (K^\varepsilon)^{\gamma} \left(\frac{\pi}{|\Omega|}\overline{K^\varepsilon}\right)^{M+1-|\gamma|}  
 \tilde d^{(k_0)}
(\gamma-e_{k_0})  \right)_{1\leq k_0\leq N}\neq 0,
\end{align}
since all the components of $L_M^\varepsilon$ are positive
(see~\eqref{dtildepos}).
We have thanks to Lemma \ref{Dinv} that $D_M^\varepsilon$ is invertible and thus \eqref{decAt} and \eqref{1LM} yield
\begin{align}\label{decAt2}
(A^\varepsilon_{M,M+1})^\top=D_M^\varepsilon \hat A^\varepsilon_{M}
\end{align}
where 
$$\hat A^\varepsilon_{M}= \mathrm{Id}+ (D_M^\varepsilon)^{-1} \, \1 \, (L_M^\varepsilon)^\top .$$
Clearly, we have
$$\hat A_M^\varepsilon|_{(L_M^\varepsilon)^\perp}=\mathrm{Id}|_{(L_M^\varepsilon)^\perp}$$
and therefore 
\begin{align}
\det \hat A_M^\varepsilon&= \hat A_M^\varepsilon \frac{L_M^\varepsilon}{|L_M^\varepsilon|} \cdot \frac{L_M^\varepsilon}{|L_M^\varepsilon|}=1+(D_M^\varepsilon)^{-1} \1 \cdot L_M^\varepsilon\\
		&=1+\frac{\pi}{|\Omega|}\sum_{k_0= 1}^N   \left( \mathop{\sum_{\gamma \in \N^{N};\gamma_{k_0}=0}}_{\substack{ |\gamma|= M}}
 \tilde d^{(k_0)}(\gamma) (K^\varepsilon)^{\gamma}\right)^{-1}  \mathop{\sum_{\gamma \in \N^{N};\gamma_{k_0}=1}}_{\substack{1\leq |\gamma|\leq M}} (K^\varepsilon)^{\gamma} \left(\frac{\pi}{|\Omega|}\overline{K^\varepsilon}\right)^{M-|\gamma|}  
 \tilde d^{(k_0)}
(\gamma-e_{k_0})
\end{align}
which we computed using Lemma \ref{Dinv} again.
Thanks to \eqref{decAt2} and Lemma \ref{Dinv} once more, we finally deduce
that \eqref{pM} holds true.
\end{proof}

\begin{lem}\label{minodet}
For all $M\geq 1$, there exists $C_M>0$ such that the determinant of $A^\varepsilon_{M,M+1}$ satisfies
$$p_M(K^\varepsilon)\geq \frac{|K^\varepsilon|^{(M+1)N}}{C_M}.$$
\end{lem}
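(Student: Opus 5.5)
The plan is to bound $p_M(K^\varepsilon)$ from below directly on the explicit formula \eqref{pM} of Lemma \ref{calcdet}, exploiting positivity. All the coefficients $\tilde d^{(k_0)}(\gamma)$ are positive by \eqref{dtildepos}, and every $K^\varepsilon_k$ is positive. Hence the second term in \eqref{pM} is a sum of nonnegative quantities, and we may simply drop it:
$$p_M(K^\varepsilon)\ \ge\ \Big(\frac{\pi}{|\Omega|}\overline{K^\varepsilon}\Big)^N \prod_{k_0=1}^N \mathop{\sum_{\gamma\in\N^N;\gamma_{k_0}=0}}_{|\gamma|=M}\tilde d^{(k_0)}(\gamma)(K^\varepsilon)^\gamma .$$
The prefactor is easy: $\overline{K^\varepsilon}=\sum_k K^\varepsilon_k\ge |K^\varepsilon|$ for any of the usual norms, up to a constant.

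The main obstacle is the product. Each factor omits the variable $K_{k_0}$. If, for some $k_0$, all the $K_j$ with $j\neq k_0$ were much smaller than $K_{k_0}$, that factor would be small compared with $|K^\varepsilon|^M$. Indeed, for $N=2$ and $k_0=1$ the factor is just $\tilde d(Me_2)K_2^M$, which is not bounded below by $|K|^M$ uniformly. I would therefore first check how small the $K$'s can be relative to one another. Since $K_k^\varepsilon=1/|\ln(\varepsilon_k/2)|$, no comparability is assumed a priori, so the dropped second term must be reinstated to handle this degenerate regime.

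To do so, I would let $k_1$ be an index realising $\max_k K_k^\varepsilon$, so that $K_{k_1}\ge |K|/C$. Then:
\begin{itemize}
\item For every $k_0\neq k_1$, the factor indexed by $k_0$ contains the monomial $\gamma=Me_{k_1}$, which gives the bound $\ge \tilde d^{(k_0)}(Me_{k_1})K_{k_1}^M\ge c|K|^M$.
\item For the factor $k_0=k_1$, I would not use the product term. Instead I would use the $k_0=k_1$ summand of the second term of \eqref{pM}, keeping only $\gamma=e_{k_1}$ ($|\gamma|=1$, $\gamma_{k_1}=1$). This gives
$$\Big(\tfrac{\pi}{|\Omega|}\Big)^{N+1}\overline{K}^N K_{k_1}\Big(\tfrac{\pi}{|\Omega|}\overline K\Big)^{M-1}\tilde d^{(k_1)}(0)\prod_{k\neq k_1}(\cdots).$$
Here each $(\cdots)\ge c|K|^M$ by the previous point, and the prefactor is $\ge c|K|^{N+M}$.
\end{itemize}
Altogether this gives $\ge c|K|^{N+M+M(N-1)}=c|K|^{(M+1)N}$.

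So the final argument has two steps. First, discard all terms of \eqref{pM} except this single one, which is legitimate since everything is nonnegative. Second, bound it from below as above, using the finitely many positive constants $\tilde d^{(k)}(Me_{j})$ and $\tilde d^{(k)}(0)$ and the equivalence of norms on $\R^N$. Since $k_1$ ranges over the finite set $\llbracket 1,N\rrbracket$, taking the minimum of the constants over $k_1$ gives a uniform $C_M$.
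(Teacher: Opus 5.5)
Your argument is correct and is essentially the paper's proof. You discard the first term of \eqref{pM}, keep in the second term only $\gamma=e_{k_0}$, and retain the monomial $\gamma=Me_{k_0}$ in each product factor $k\neq k_0$; the one cosmetic difference is that the paper keeps this contribution for every $k_0$ and bounds $\sum_{k_0}(K^\varepsilon_{k_0})^{M(N-1)+1}$ from below by a multiple of $|K^\varepsilon|^{M(N-1)+1}$, whereas you select a maximising index $k_1$ directly.
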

\begin{proof}
Notice that \eqref{dtildepos} allows to remove some terms from \eqref{pM} to obtain an estimate from below of $p_M(K^\varepsilon)$.
We can thus write
\begin{align}
p_M(K^\varepsilon)&\geq \left( \frac{\pi}{|\Omega|} \right)^{N+1} \overline{K^\varepsilon}^{N}  \sum_{k_0= 1}^N   K^\varepsilon_{k_0}\left(\frac{\pi}{|\Omega|}\overline{K^\varepsilon}\right)^{M-1}   \prod_{k\neq k_0}  
 \tilde d^{(k)}(M e_{k_0}) (K^\varepsilon_{k_0})^M  \\
		&\geq \frac{1}{C} \overline{K^\varepsilon}^{N+M-1}  \sum_{k_0= 1}^N   (K^\varepsilon_{k_0})^{M(N-1)+1} \\
	 &\geq \frac{|K^\varepsilon|^{(M+1)N}}{C_M}
\end{align}
which is the desired result.
\end{proof}

Let us now state the second main result of this work.
\begin{thm}\label{thm:exit_pt}
Recall that $p_M(K^\varepsilon)$ was computed in \eqref{pM}.
For all $k_0\in \llbracket 1,N \rrbracket$, $M\geq 1$ 
and all integers $m,q$ such that $0\leq m \leq q \leq M-1$, there exists $Q_{M,m,q}^{(k_0)}\in \mathscr P^{hom}_{(M+1)N(m+1)+q}$ which can be computed thanks to the expansions \eqref{devAetB} and such that
\begin{align}\label{expY}
Y^\varepsilon_{k_0}= \sum_{q=0}^{M-1} \sum_{m=0}^{q}\frac{ Q_{M,m,q}^{(k_0)}(K^\varepsilon) }{p_M(K^\varepsilon)^{m+1}}+O\left( |K^\varepsilon|^M \right).
\end{align}
In particular, thanks to Lemma \ref{minodet}, the term of index $q$ satisfies
$$\sum_{m=0}^{q}\frac{ Q_{M,m,q}^{(k_0)}(K^\varepsilon) }{p_M(K^\varepsilon)^{m+1}}=O(|K^\varepsilon|^q).$$
Moreover, the term of index $q=0$ simplifies as 
\begin{align}\label{Yprem}
\frac{Q_{M,0,0}^{(k_0)}(K^\varepsilon)}{p_M(K^\varepsilon)}=\frac{K^\varepsilon_{k_0}}{\overline{K^\varepsilon}}.
\end{align}
\end{thm}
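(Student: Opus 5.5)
The plan is to solve the linear system $A_M^\varepsilon Y^\varepsilon=B_M^\varepsilon$ of Lemma~\ref{syst} perturbatively around its leading homogeneous part. First, I write $A_M^\varepsilon=A^\varepsilon_{M,M+1}+E^\varepsilon$, where $E^\varepsilon=\sum_{m=M+2}^{2M}A^\varepsilon_{M,m}+O(|K^\varepsilon|^{2M+1})$. By Lemma~\ref{minodet}, $A^\varepsilon_{M,M+1}$ is invertible. Its inverse is $\mathrm{adj}(A^\varepsilon_{M,M+1})/p_M(K^\varepsilon)$, and the adjugate has entries in $\mathscr P^{hom}_{(M+1)(N-1)}$. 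Hence $(A^\varepsilon_{M,M+1})^{-1}E^\varepsilon$ has entries of the form $P/p_M$ with $P$ homogeneous of degree $(M+1)(N-1)+M+1+j$ for $j\ge1$. By Lemma~\ref{minodet} each such entry is $O(|K^\varepsilon|^{j})$, so this matrix is $O(|K^\varepsilon|)$. A Neumann series then gives
$$Y^\varepsilon=\sum_{m\ge0}\bigl(-(A^\varepsilon_{M,M+1})^{-1}E^\varepsilon\bigr)^m (A^\varepsilon_{M,M+1})^{-1}B_M^\varepsilon .$$

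Next, I expand $B_M^\varepsilon$ and $E^\varepsilon$ into their homogeneous pieces and collect terms by the excess degree $q$. The $m$-th Neumann term carries the factor $p_M^{-(m+1)}$. Its numerator is a product of $m+1$ adjugates, $m$ pieces of $E$, and one piece of $B$. The total degree is $(m+1)(M+1)(N-1)+\sum(\text{degrees})$, and the total order is $q\ge m$. This gives polynomials $Q^{(k_0)}_{M,m,q}$ of the stated degree $(M+1)N(m+1)+q$, with $m\le q$. I truncate at $q\le M-1$. The discarded terms, together with the $O(|K^\varepsilon|^{2M+1})$ remainders multiplied by $p_M^{-1}$-type factors, are $O(|K^\varepsilon|^M)$, again by Lemma~\ref{minodet}. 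The bound $O(|K^\varepsilon|^q)$ for the $q$-th term follows in the same way, since each $Q/p_M^{m+1}$ has degree difference exactly $q$ and $p_M\ge |K^\varepsilon|^{(M+1)N}/C_M$.

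It remains to show \eqref{Yprem}, i.e.\ that $(A^\varepsilon_{M,M+1})^{-1}B^\varepsilon_{M,M+1}=(K^\varepsilon_{k_0}/\overline{K^\varepsilon})_{k_0}$. This is the main obstacle. I would verify it directly by checking $A^\varepsilon_{M,M+1}Y=B^\varepsilon_{M,M+1}$ for $Y_k=K^\varepsilon_k/\overline{K^\varepsilon}$; uniqueness then follows from invertibility. By \eqref{AMprem} and \eqref{=horsdiag}, row $k_0$ applied to this $Y$ gives
$$\Bigl(\tfrac{\pi}{|\Omega|}\overline{K}\Bigr)^{M+1}\tfrac{K_{k_0}}{\overline K}+\tfrac{\pi}{|\Omega|}K_{k_0}\,\frac{1}{\overline K}\sum_{k\ne k_0}\ \sum_{\gamma_{k_0}=0,\gamma_k\ne0}(K)^{\gamma}\Bigl(\tfrac{\pi}{|\Omega|}\overline K\Bigr)^{M+1-|\gamma|}d^{(k_0)}_{0,k}(\gamma).$$
The first term matches the first term of \eqref{BMprem}. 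The second should match $-\frac{\pi}{|\Omega|}K_{k_0}\sum_\gamma K^\gamma(\frac{\pi}{|\Omega|}\overline K)^{M-|\gamma|}c_0^{(k_0)}(\gamma)$. This reduces to the combinatorial identity
$$\sum_{k\in\mathrm{supp}\,\gamma}d^{(k_0)}_{0,k}(\gamma)=-\tfrac{|\Omega|}{\pi}\,c^{(k_0)}_0(\gamma),$$
after regrouping the sum over $\gamma$ with a factor $\frac{\pi}{|\Omega|}\overline K$. I would prove this identity by induction on $|\gamma|$ using \eqref{cn}, \eqref{dn}, \eqref{cdl0} and \eqref{gamma-1}. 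For $|\gamma|=1$ it reads $1=-\frac{|\Omega|}{\pi}\cdot(-\frac{\pi}{|\Omega|})$. For the inductive step, the $n=0$ recursions for $c_0$ and $d_{0,k}$ involve the same coefficients $\hat a^{(k_0)}_{\gamma-\beta}$ over $|\beta|=|\gamma|-1$, so summing \eqref{dn} over $k$ and using the induction hypothesis yields \eqref{cn}. If the regrouping turns out off by an index shift, I would instead use the rank-one structure of Lemma~\ref{calcdet}: apply the Sherman–Morrison formula to $(A^\varepsilon_{M,M+1})^\top=D_M^\varepsilon+\1 (L_M^\varepsilon)^\top$ and compute the solution explicitly.
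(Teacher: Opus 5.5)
Your proposal is correct. The first part matches the paper's argument: a Neumann series around $A^\varepsilon_{M,M+1}$, its inverse written as adjugate over $p_M(K^\varepsilon)$, degree counting, and Lemma~\ref{minodet} to control each homogeneous quotient and the truncation remainders. The only change is that you organize the bookkeeping by excess degree $q$ rather than through the classes $\mathcal E(\kappa;\varrho)$.

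The genuine difference is in \eqref{Yprem}.

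\textbf{The paper's route.} It never touches the coefficients $c_0^{(k_0)}$ and $d_{0,k}^{(k_0)}$ at this point. It reads off $Y^\varepsilon_{k_0}=K^\varepsilon_{k_0}/\overline{K^\varepsilon}+O(|K^\varepsilon|)$ directly from \eqref{calcX}, using Lemma~\ref{phi,u}, Theorem~\ref{lam0} and \eqref{devhatphi}. It deduces that the homogeneous polynomial $\overline{K^\varepsilon}Q^{(k_0)}_{M,0,0}-K^\varepsilon_{k_0}p_M$, of degree $(M+1)N+1$, is $O(|K^\varepsilon|^{(M+1)N+2})$ on the positive orthant. Lemma~\ref{multipoly} then forces it to vanish.

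\textbf{Your route.} You check algebraically that $(K^\varepsilon_k/\overline{K^\varepsilon})_k$ solves $A^\varepsilon_{M,M+1}Y=B^\varepsilon_{M,M+1}$ exactly. This reduces to $\frac{\pi}{|\Omega|}\sum_{k\in\mathrm{supp}\,\gamma}d^{(k_0)}_{0,k}(\gamma)=-c^{(k_0)}_0(\gamma)$, and that identity holds.
\begin{itemize}
\item For $|\gamma|=1$ it is \eqref{cdl0}.
\item For $|\gamma|\ge 2$, sum \eqref{dn} at $n=0$ over $k\in\mathrm{supp}\,\gamma$. Since $\mathrm{supp}\,\beta\subset\mathrm{supp}\,\gamma$ for $\beta\le\gamma$, the induction hypothesis turns this into $-\frac{|\Omega|}{\pi}$ times \eqref{cn} at $n=0$.
\end{itemize}
No index shift appears, so the Sherman--Morrison fallback is not needed.

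\textbf{Comparison.} Your argument is purely algebraic and self-contained at the level of the leading-order matrices. It bypasses \eqref{calcX} and Lemma~\ref{multipoly}. As a by-product, combined with Lemma~\ref{recd0}, it gives $c_0^{(k_0)}(\gamma)<0$. The price is that it depends on the exact form of \eqref{BMprem}, whose derivation the paper only sketches. The paper's proof of \eqref{Yprem} never uses \eqref{BMprem}; it needs only the crude first-order asymptotics of $Y^\varepsilon$.
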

This result shows that there exists
  an expansion of the probability to leave through each of the exit
  windows $(\Gamma^\varepsilon_{k_0})_{1 \le k_0 \le N}$ in terms of
  powers of $K^\varepsilon$. As will be clear in the proof, it is not
  possible to give more explicit formulas for the terms in the
  expansion when $q\ge 1$, in full generality. This result and the
  proof below should thus be seen as a
  recipe to obtain such expansions.

\begin{proof}
Let us denote $\check A_M^\varepsilon$ the transpose of the cofactor matrix of $A^\varepsilon_{M,M+1}$ so that 
$$(A^\varepsilon_{M,M+1})^{-1}=\frac{\check A_M^\varepsilon}{p_M(K^\varepsilon)}.$$
Note that by \eqref{devAetB}, 
\begin{align}\label{degcheckA}
\text{the entries of }\check A_M^\varepsilon \text{ belong to }\mathscr P^{hom}_{(M+1)(N-1)}(K^\varepsilon).
\end{align}
Thanks to Lemma \ref{minodet}, we thus have
\begin{align}\label{estiminvA}
(A^\varepsilon_{M,M+1})^{-1}=O\left(  |K^\varepsilon|^{-(M+1)} \right).
\end{align}
Writing
$$A_M^\varepsilon=A^\varepsilon_{M,M+1}\Big( \mathrm{Id} + (A^\varepsilon_{M,M+1})^{-1} \left(  A_M^\varepsilon - A^\varepsilon_{M,M+1} \right) \Big) $$
and also using \eqref{devAetB}, this implies that $A_M^\varepsilon$ is
invertible for sufficiently small $\varepsilon$ and
\begin{align}
(A_M^\varepsilon)^{-1}&=\sum_{m=0}^{M-1} \left( - (A^\varepsilon_{M,M+1})^{-1} \left(  A_M^\varepsilon - A^\varepsilon_{M,M+1} \right) \right)^m (A^\varepsilon_{M,M+1})^{-1}  +O\left(  |K^\varepsilon|^{-1} \right) \label{A-1}\\
		&=\sum_{m=0}^{M-1} \frac{\left( - \check A^\varepsilon_{M} \left(  A_M^\varepsilon - A^\varepsilon_{M,M+1} \right) \right)^m \check A^\varepsilon_{M}}{p_M(K^\varepsilon)^{m+1}} +O\left(  |K^\varepsilon|^{-1} \right).
\end{align}
It follows by Lemma \ref{syst} that
\begin{align}\label{Y=}
Y^\varepsilon=\sum_{m=0}^{M-1} \frac{\left( - \check A^\varepsilon_{M} \left(  A_M^\varepsilon - A^\varepsilon_{M,M+1} \right) \right)^m \check A^\varepsilon_{M}}{p_M(K^\varepsilon)^{m+1}} B_M^\varepsilon +O\left(  |K^\varepsilon|^{M} \right).
\end{align}
For $\kappa$, $\varrho\in \N^*$ with $\kappa < \varrho$, let us denote
$$\mathcal E(\kappa; \varrho)=\left\{E^\varepsilon\in \mathcal
  M_N(\R)\text{ s.t. }   E^\varepsilon=\sum_{n=\kappa}^{\varrho-1} E^\varepsilon_n
  +O\left(|K^\varepsilon|^\varrho\right)\text{ with }
    \forall n\in \llbracket  \kappa, \varrho-1\rrbracket ,
  E^\varepsilon_n\in \mathcal M_N\left( \mathscr
    P^{hom}_n(K^\varepsilon) \right) 
\right\}$$
the set of matrix-valued functions of $K^\varepsilon$ admitting an
expansion of order $\varrho$ and with leading term of degree~$\kappa$ (here and in the following,
  $\mathcal M_N\left( \mathscr
    P^{hom}_n(K^\varepsilon) \right)$ denotes a $N\times N$ matrix
  with each entry in $ \mathscr
  P^{hom}_n(K^\varepsilon)$).
To prove the first statement, it suffices thanks to \eqref{Y=} to show that for all $0\leq m\leq M-1$, 
\begin{align}\label{bonneexp}
\left( - \check A^\varepsilon_{M} \left(  A_M^\varepsilon - A^\varepsilon_{M,M+1} \right) \right)^m \check A^\varepsilon_{M} B_M^\varepsilon \in \mathcal E\Big((M+1)N(m+1)+m\, ; \,(M+1)N(m+1)+M\Big).
\end{align}
Indeed, if \eqref{bonneexp} is established, then for all $q\in \llbracket m,  M-1\rrbracket$, there exists a column matrix $Q_{M,m,q}$ whose entries belong to $\mathscr P^{hom}_{(M+1)N(m+1)+q}$ and such that 
$$\left( - \check A^\varepsilon_{M} \left(  A_M^\varepsilon - A^\varepsilon_{M,M+1} \right) \right)^m \check A^\varepsilon_{M} B_M^\varepsilon= \sum_{q=m}^{M-1}Q_{M,m,q}+O\left( |K^\varepsilon|^{(M+1)N(m+1)+M} \right).$$
Combined with \eqref{Y=} and Lemma \ref{minodet}, this does show (after switching the orders of summation) that \eqref{expY} holds true.
It thus remains to establish \eqref{bonneexp} to obtain \eqref{expY}.
We know from \eqref{devAetB} that
$$A_M^\varepsilon - A^\varepsilon_{M,M+1}\in \mathcal E\Big(M+2\, ; \,2M+1\Big).$$
Using \eqref{degcheckA}, this gives
$$ - \check A^\varepsilon_{M} \left(  A_M^\varepsilon - A^\varepsilon_{M,M+1} \right)\in \mathcal E\Big((M+1)N+1\, ; \, (M+1)N+M\Big).$$
We thus have for all $0\leq m\leq M-1$,
$$\left( - \check A^\varepsilon_{M} \left(  A_M^\varepsilon -
    A^\varepsilon_{M,M+1} \right) \right)^m \in \mathcal
E\Big((M+1)Nm+m\, ; \,(M+1)Nm+M+m-\1_{m\geq1}\Big).$$
Using \eqref{devAetB} and \eqref{degcheckA} again, we finally obtain
$$\check A^\varepsilon_{M} B_M^\varepsilon  \in \mathcal E\Big((M+1)N\, ; \,(M+1)N+M\Big)$$
from which we deduce
$$\left( - \check A^\varepsilon_{M} \left(  A_M^\varepsilon - A^\varepsilon_{M,M+1} \right) \right)^m \check A^\varepsilon_{M} B_M^\varepsilon \in \mathcal E\Big((M+1)N(m+1)+m\, ; \,(M+1)N(m+1)+M+m-\1_{m\geq1}\Big)$$
and \eqref{bonneexp} follows as $m-\1_{m\geq1}\geq 0$.
The proof of \eqref{expY} is now complete.

\hop

For the last statement, i.e \eqref{Yprem}, it suffices to show that
\begin{align}\label{amqfin}
Y^\varepsilon_{k_0}= \frac{K^\varepsilon_{k_0}}{\overline{K^\varepsilon}} +O(|K^\varepsilon|).
\end{align}
Indeed, this would imply that
$$\frac{Q_{M,0,0}^{(k_0)}(K^\varepsilon)}{p_M(K^\varepsilon)}-\frac{K^\varepsilon_{k_0}}{\overline{K^\varepsilon}} =O(|K^\varepsilon|)$$
and consequently
$$\overline{K^\varepsilon}Q_{M,0,0}^{(k_0)}(K^\varepsilon)-K^\varepsilon_{k_0}p_M(K^\varepsilon)=O\left( |K^\varepsilon|^{(M+1)N+2} \right)$$
which allows to conclude thanks to Lemma \ref{multipoly}.
A 
quick way to establish \eqref{amqfin} is to use \eqref{calcX} and approximate each quantity on the RHS of this equation by its leading term.
Thanks to Lemma \ref{phi,u}, Theorem \ref{lam0} and \eqref{devhatphi}, this gives
$$Y^\varepsilon_{k_0}=\frac{-K^\varepsilon_{k_0}+O(|K^\varepsilon|^2)}{\left(\overline{K^\varepsilon}+O(|K^\varepsilon|^2)\right)(1+O(|K^\varepsilon|))}\left(-1+O(|K^\varepsilon|)\right) =\frac{K^\varepsilon_{k_0}}{\overline{K^\varepsilon}} +O(|K^\varepsilon|)$$
which is precisely \eqref{amqfin}.
The proof of the Theorem is therefore complete.
\end{proof}

\appendix

\section{The distribution $\partial_n
  u_0^\varepsilon/\lambda_0^\varepsilon$ is a probability measure}\label{sec:prob_res}

The objective of this section is to prove that one can identify the
normal derivative $\partial_n u_0^\varepsilon/\lambda_0^\varepsilon$ as a probability
measure on $\partial \Omega$, with support in $\overline{\Gamma^\varepsilon_{\mathcal D}}$.

From the regularity on the first eigenvector $u_0^\varepsilon$, the
normal derivative is well defined as a distribution on $\partial
\Omega$ using the standard $H^{-1/2}(\partial \Omega),
H^{1/2}(\partial \Omega)$ duality as follows: for any $\phi \in
H^{1/2}(\partial \Omega)$,
\begin{equation}\label{eq:weak_normal_derivative}
  \langle \partial_n u_0^\varepsilon, \phi \rangle_{\partial \Omega} = \int_{\Omega} \nabla u_0^\varepsilon \cdot
  \nabla \phi + \int_{\Omega} \Delta u_0^\varepsilon \phi.
  \end{equation}
Indeed, $u_0^\varepsilon \in H^1(\Omega)$, $\Delta u_0^\varepsilon \in
L^2(\Omega)$ and any function $\phi \in
H^{1/2}(\partial \Omega)$ can be extended to a function $\phi \in
H^1(\Omega)$ by standard results on the trace
operator~\cite{Evans,lions2012non} (and the right-hand-side does not
depend on this extension). 
  We now claim that the distribution $\partial_n u_0^\varepsilon$ can
  be identified as a positive Radon measure with support in
  $\overline{\Gamma^\varepsilon_{\mathcal D}}$, and with finite mass.

  \begin{prop}\label{prop:borne_dnu0}
This distribution $\partial_n u_0^\varepsilon$ naturally defined in
$H^{-1/2}(\partial \Omega)$ by~\eqref{eq:weak_normal_derivative}
can be identified as a positive Radon measure on $\partial \Omega$
with support in $\overline{\Gamma^\varepsilon_{\mathcal D}}$. Moreover, its total
mass is $\langle \partial_n u_0^\varepsilon , 1 \rangle_{\partial \Omega} =
\lambda_0^\varepsilon$. In particular, for any function $\psi \in
L^\infty(\Gamma^\varepsilon_{\mathcal D})$, one has
\begin{equation}\label{eq:dnu_mes_proba}
|\langle \partial_n u_0^\varepsilon, \psi\rangle_{\partial \Omega}| \le
  \lambda_0^\varepsilon \|\psi\|_{L^\infty(\Gamma^\varepsilon_{\mathcal D})}.
  \end{equation}
In particular, if $\psi=\alpha \1_{\Gamma_k^\varepsilon}+O_{L^\infty}(|K^\varepsilon|^\infty)$ for some constant $\alpha$ and some $k\in \llbracket 1,N \rrbracket$, then we have
\begin{align}\label{crochetbord2}
\left\langle \partial_n u_0^\varepsilon , \psi \right\rangle_{H^{-1/2}, H^{1/2}(\Gamma_{\mathcal D}^\varepsilon)}=\alpha \left\langle \partial_n u_0^\varepsilon , \1 \right\rangle_{\Gamma_k^\varepsilon}+O(|K^\varepsilon|^\infty).
\end{align}
\end{prop}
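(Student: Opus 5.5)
We plan to prove positivity first and then obtain the measure structure from the Riesz–Markov theorem. Since $u_0^\varepsilon\le 0$ in $\Omega$ (normalization \eqref{u,1}), we have $-u_0^\varepsilon\ge 0$, $-\Delta(-u_0^\varepsilon)=\lambda_0^\varepsilon(-u_0^\varepsilon)\ge 0$, $-u_0^\varepsilon=0$ on $\Gamma^\varepsilon_{\mathcal D}$ and $\partial_n u_0^\varepsilon=0$ on $\Gamma^\varepsilon_{\mathcal N}$. The first step is to show that for every $\phi\in H^1(\Omega)$ with $\phi\ge 0$ on $\partial\Omega$ (trace sense), the quantity \eqref{eq:weak_normal_derivative} is nonnegative.

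\emph{Step 1: support.} If $\phi\in H^{1/2}(\partial\Omega)$ vanishes on a neighborhood of $\overline{\Gamma^\varepsilon_{\mathcal D}}$, then $\langle\partial_n u_0^\varepsilon,\phi\rangle_{\partial\Omega}=0$. This is the weak formulation of the Neumann condition: the variational characterization of $\mathcal L^\varepsilon$ gives $\int\nabla u_0^\varepsilon\cdot\nabla\phi=\lambda_0^\varepsilon\int u_0^\varepsilon\phi$ for all $\phi\in H^1_{0,\Gamma^\varepsilon_{\mathcal D}}$, and this cancels the volume term since $\Delta u_0^\varepsilon=-\lambda_0^\varepsilon u_0^\varepsilon$.

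\emph{Step 2: positivity.} Take $\phi\in\mathcal C^\infty(\overline\Omega)$ with $\phi\ge0$ on $\partial\Omega$. Our first choice of argument is an approximation: for $\eta>0$, set $\phi_\eta=\phi\,\min(1,-u_0^\varepsilon/\eta)$. Since $u_0^\varepsilon$ vanishes on $\Gamma_{\mathcal D}^\varepsilon$, the function $\phi-\phi_\eta=\phi(1-\min(1,-u_0^\varepsilon/\eta))_+$ is an admissible test function localized where $-u_0^\varepsilon<\eta$, and $\phi_\eta\in H^1_{0,\Gamma^\varepsilon_{\mathcal D}}$. By Step 1, the pairing with $\phi$ reduces to the pairing with $\phi-\phi_\eta$, namely
\[
\int_\Omega\nabla u_0^\varepsilon\cdot\nabla(\phi-\phi_\eta)-\lambda_0^\varepsilon\int_\Omega u_0^\varepsilon(\phi-\phi_\eta).
\]
The last term tends to $0$. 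In the first term, the piece where the gradient falls on the truncation is $\phi\,\eta^{-1}|\nabla u_0^\varepsilon|^2\1_{\{-u_0^\varepsilon<\eta\}}\ge0$, and the piece $\int (1-\min(\cdot))\nabla u_0^\varepsilon\cdot\nabla\phi$ tends to $0$ by dominated convergence, because $\{-u_0^\varepsilon<\eta\}$ shrinks to a null set as $\eta\to0$ (here we use $u_0^\varepsilon<0$ in $\Omega$ by the strong maximum principle). Hence the pairing is $\ge0$.

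\emph{Step 3: Riesz.} Using the standard density of restrictions of smooth functions, positivity gives, for $\psi\in\mathcal C^0(\partial\Omega)\cap H^{1/2}$, the bound $|\langle\partial_n u_0^\varepsilon,\psi\rangle|\le\|\psi\|_\infty\langle\partial_n u_0^\varepsilon,1\rangle$: apply positivity to $\|\psi\|_\infty\pm\psi$. The functional therefore extends to a positive functional on $\mathcal C^0(\partial\Omega)$, which by Riesz–Markov is a positive Radon measure. By Step 1 it is supported in $\overline{\Gamma^\varepsilon_{\mathcal D}}$. Its total mass follows from \eqref{eq:weak_normal_derivative} with $\phi=1$: it equals $\int\Delta u_0^\varepsilon=-\lambda_0^\varepsilon\int u_0^\varepsilon=\lambda_0^\varepsilon$ by \eqref{u,1}.

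\emph{Step 4: consequences.} The bound \eqref{eq:dnu_mes_proba} for bounded $\psi$ follows by integrating against the measure (interpreting the pairing as the integral for $L^\infty$ data). We should also check that the measure does not charge the two-point set $\partial\Gamma_{\mathcal D}^\varepsilon$; otherwise we can simply interpret $L^\infty(\Gamma_{\mathcal D}^\varepsilon)$ as functions on the closure, and the inequality holds either way. Finally, \eqref{crochetbord2} follows by writing $\psi=\alpha\1_{\Gamma_k^\varepsilon}+r$ with $\|r\|_\infty=O(|K^\varepsilon|^\infty)$ and applying \eqref{eq:dnu_mes_proba} to $r$, which gives an error $\lambda_0^\varepsilon O(|K^\varepsilon|^\infty)=O(|K^\varepsilon|^\infty)$ by Theorem \ref{gap}.

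The main obstacle is Step 2. The delicate points are justifying the limiting argument in the truncation (the dominated convergence step) and the fact that the pairing is independent of the $H^1$ extension.
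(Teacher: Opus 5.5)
Your proof is correct in substance, once one slip (below) is fixed, but your positivity step takes a different route from the paper's.

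\textbf{Comparison with the paper.} The paper takes $\phi\ge0$ in $H^{1/2}(\partial\Omega)$ and solves the auxiliary mixed problem $\Delta\Phi=0$ in $\Omega$, $\Phi=\phi$ on $\Gamma^\varepsilon_{\mathcal D}$, $\partial_n\Phi=0$ on $\Gamma^\varepsilon_{\mathcal N}$. It gets $\Phi\ge0$ from the weak maximum principle, by testing with $\max(-\Phi,0)$. Since $u_0^\varepsilon$ is an admissible test function for the equation of $\Phi$, one has $\int_\Omega\nabla u_0^\varepsilon\cdot\nabla\Phi=0$. This gives in one line $\langle\partial_n u_0^\varepsilon,\phi\rangle_{\partial\Omega}=-\lambda_0^\varepsilon\int_\Omega u_0^\varepsilon\Phi\ge0$.

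That route costs an auxiliary PDE. In return it gives an explicit formula which is exactly the analytic form of $\lambda_0^\varepsilon\E_{\nu_0^\varepsilon}(\phi(X_\tau))$, because $\Phi(x)=\E_x(\phi(X_\tau))$.

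Your truncation $\phi\min(1,-u_0^\varepsilon/\eta)$ needs no auxiliary problem. It uses only $u_0^\varepsilon\le0$, $u_0^\varepsilon\in H^1_{0,\Gamma^\varepsilon_{\mathcal D}}$ and the weak eigenvalue equation. It also yields the coarea-type representation $\langle\partial_n u_0^\varepsilon,\phi\rangle_{\partial\Omega}=\lim_{\eta\to0}\eta^{-1}\int_{\{-u_0^\varepsilon<\eta\}}\phi\,|\nabla u_0^\varepsilon|^2$, which makes positivity directly visible.

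\textbf{The slip.} You assume $\phi\ge0$ only on $\partial\Omega$. But the sign claim $\phi\,\eta^{-1}|\nabla u_0^\varepsilon|^2\1_{\{-u_0^\varepsilon<\eta\}}\ge0$ concerns interior points, where such a $\phi$ can be negative; take for instance $\phi$ vanishing on $\partial\Omega$ and negative inside. You need $\phi\ge0$ in $\Omega$.

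This is harmless, because the pairing depends only on the trace. You may replace $\phi$ by the Lipschitz function $\max(\phi,0)$, which has the same trace.

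Your argument then runs verbatim for every nonnegative $\phi\in H^1(\Omega)\cap L^\infty(\Omega)$. This directly covers the functions $\|\psi\|_\infty\pm\psi$ in Step 3: truncate any $H^1$ extension to $[0,2\|\psi\|_\infty]$. So you no longer need to approximate nonnegative $H^{1/2}$ data by smooth ones there.

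Finally, the independence of the extension, which you flag as delicate, is immediate. Two extensions differ by some $v\in H^1_0(\Omega)$. Then $\int_\Omega\nabla u_0^\varepsilon\cdot\nabla v+\int_\Omega\Delta u_0^\varepsilon\,v=0$ by the definition of the distributional Laplacian and the density of $\mathcal C^\infty_c(\Omega)$ in $H^1_0(\Omega)$.
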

\begin{proof}
 Let us check that the distribution $\partial_n u_0^\varepsilon$ is
actually a positive distribution. Indeed, let $\phi \in
H^{1/2}(\partial \Omega)$ be such that $\phi \ge 0$. Let us now
consider $\Phi \in H^1(\Omega)$ the solution to
$$\left\{
  \begin{aligned}
    -\Delta \Phi &=0 \text{ in } \Omega \\
    \Phi &= \phi \text{ on } \Gamma^\varepsilon_{\mathcal D}\\
   \partial_n \Phi &= 0 \text{ on } \Gamma^\varepsilon_{\mathcal N}.
    \end{aligned}
\right.
$$
By the maximum principle, one has $\Phi \ge 0$ in $\Omega$. Let us
recall the argument. Let us consider $\Phi_- = \max(-\Phi,0) \in
H^1(\Omega)$. Notice that, using the trace theorem, $\Phi_-=0$ almost everywhere on
$\Gamma^\varepsilon_{\mathcal D}$. Thus, by integration by parts, one
has: $\int_\Omega |\nabla \Phi_-|^2= -\int_\Omega \nabla \Phi \cdot \nabla \Phi_- =  \int_\Omega (\Delta \Phi) \Phi_-=0$ and
thus $\Phi_-=0$.

Now, one has, using integration by parts,
the facts that $-\Delta \Phi = 0 \in L^2(\Omega)$, $\int_{\partial \Omega} u_0^\varepsilon \partial_n \Phi = 0$, $u_0^\varepsilon$ is an eigenfunction, 
$u_0^\varepsilon \le 0$, and $\lambda_0^\varepsilon > 0$,
\begin{align*}
\langle \partial_n u_0^\varepsilon, \phi \rangle_{\partial \Omega} &= \int_{\Omega} \nabla u_0^\varepsilon \cdot
                                                   \nabla \Phi + \int_{\Omega} \Delta u_0^\varepsilon \Phi\\
  &= \int_{\Omega} u_0^\varepsilon \Delta \Phi - \lambda^\varepsilon
    _0 \int_{\Omega} u_0^\varepsilon \Phi  \ge 0.
  \end{align*}
This shows that $\partial_n u_0^\varepsilon$ is a positive
distribution. Therefore, from the Riesz–Markov–Kakutani representation
theorem (see for example~\cite{rudin1974real} or~\cite{strichartz2003guide}), one can identify
$\partial_n u_0^\varepsilon/\lambda_0^\varepsilon$ as a probability measure (since $\langle
\partial_n u_0^\varepsilon, 1 \rangle = \lambda_0^\varepsilon$ since
we choose the normalization $\int_\Omega u_0^\varepsilon =
-1$). Moreover, the support of this measure is
$\overline{\Gamma^\varepsilon_{\mathcal D}}$ from the boundary conditions
  satisfied by $u_0^\varepsilon$ (see~\eqref{eq:u0}).
  
For the last statement, it suffices to write $\psi=\alpha \1_{\Gamma_k^\varepsilon}+\tilde \psi$ with $\|\tilde \psi\|_\infty=O(|K^\varepsilon|^\infty)$ and apply \eqref{eq:dnu_mes_proba}:
\begin{align}
\left\langle \partial_n u_0^\varepsilon , \psi
  \right\rangle_{H^{-1/2}, H^{1/2}(\Gamma_{\mathcal D}^\varepsilon)}&=\alpha
                                                           \left\langle
                                                           \partial_n
                                                           u_0^\varepsilon
                                                           ,
                                                           \1_{\Gamma_k^\varepsilon}
                                                           \right\rangle_{H^{-1/2},
                                                           H^{1/2}(\Gamma_{\mathcal D}^\varepsilon)}
                                                           +
                                                           \left\langle
                                                           \partial_n
                                                           u_0^\varepsilon
                                                           , \psi - \alpha
                                                           \1_{\Gamma_k^\varepsilon}
                                                           \right\rangle_{H^{-1/2},
                                                           H^{1/2}(\Gamma_{\mathcal D}^\varepsilon)}
                                                           \\
		& =\alpha \left\langle \partial_n u_0^\varepsilon , \1
           \right\rangle_{\Gamma_k^\varepsilon} +\lambda_0^\varepsilon
           \, O(|K^\varepsilon|^\infty).
\end{align}
Combined with Theorem \ref{gap}, this gives the result.
\end{proof}

This result can also be obtained by using a probabilistic
representation of the function $u_0^\varepsilon$ and its normal
derivative $\partial_n u_0^\varepsilon$, as we explain, now, referring
to~\cite{Louis} for detailed proofs. Let us define the Brownian motion in $\Omega$ reflected on $\partial \Omega$
\begin{align}\label{process}
dX_t=\sqrt 2 \, dB_t- \1_{\partial \Omega
}(X_t) n_{X_t} \, d L_t
\end{align}
where $L_t$ is the local time on $\partial \Omega$.
The first exit time is
$$\tau:= \inf \{t\geq 0\, ; \, X_t \in \Gamma_{\mathcal D}^\varepsilon
\}.$$
We recall Ito's generalized formula for functions of the process
\eqref{process} starting at $x\in \Omega$: 
for a $\mathcal C^{1,2}(\Omega)$ function $w$ and $t\in [0, \tau]$, it holds 
\begin{align}\label{ito}
w(t,X_t^x)-w(0,x&)=\int_0^t \big( \partial_t w+\Delta w\big) (s,X_s^x)
   \D s+\sqrt 2 \int_0^t \nabla w(s,X_s^x)   \D B_s\\
  &\quad - \int_0^t \1_{\Gamma_{\mathcal N}^\varepsilon}(X_s) \partial_n w(s,X_s^x)   \D L_s.
\end{align}
Finally, we denote the probability measure on $\Omega$:
$$\nu_0^\varepsilon(\D x)=-u_0^\varepsilon(x) \D x.$$

\begin{prop}
The process $(X_t)_{t \ge 0}$ absorbed on
$\Gamma^\varepsilon_{\mathcal D}$ has a unique quasi-stationary
distribution, which is $\nu_0^\varepsilon$: for all $t>0$,  for any test function $\Psi \in \mathcal
C^\infty(\Omega)$,
$$\E_{\nu_0^\varepsilon}(\Psi(X_t) 1_{t < \tau})=\left( \int \Psi \D
  \nu_0^\varepsilon\right) 
\mathbb P_{\nu_0^\varepsilon}(t < \tau),$$
where the subscript $\nu_0^\varepsilon$ indicates that $X_0
\sim \nu_0^\varepsilon$.
Moreover, if $X_0 \sim \nu_0^\varepsilon$, $\tau$ is
exponentially distributed with parameter $\lambda_0^\varepsilon$, $\tau$ is
independent of $X_\tau$ and
the law of $X_\tau$ is characterized by:
for any $\psi\in \mathcal C^\infty(\Gamma_{\mathcal D}^\varepsilon)$,
\begin{align}\label{crochetbord}
\E_{\nu_0^\varepsilon} \Big( \psi(X_\tau) \Big)
  =\frac{1}{\lambda_0^\varepsilon} \left\langle \partial_n
  u_0^\varepsilon , \psi \right\rangle_{\partial \Omega}.
\end{align}
\end{prop}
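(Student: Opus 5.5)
The plan is to derive the three claims (quasi-stationarity of $\nu_0^\varepsilon$, exponential law of $\tau$, independence and the exit law \eqref{crochetbord}) from the PDE properties of $u_0^\varepsilon$, by applying the generalized Itô formula \eqref{ito} to well-chosen space–time functions and stopping at $\tau$.

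\textbf{Step 1: a martingale identity.} For a test function $\Psi$, let $w(s,x)$ solve the backward problem $\partial_s w+\Delta w=0$ on $[0,t)\times\Omega$, with $w(t,\cdot)=\Psi$, $\partial_n w=0$ on $\Gamma^\varepsilon_{\mathcal N}$ and $w=0$ on $\Gamma^\varepsilon_{\mathcal D}$. Apply \eqref{ito} up to $t\wedge\tau$. The $\D L_s$ term vanishes by the Neumann condition, and the stochastic integral has zero mean. This gives $\E_x(\Psi(X_t)1_{t<\tau})=w(0,x)=(e^{-t\mathcal L^\varepsilon}\Psi)(x)$. Integrating against $\nu_0^\varepsilon=-u_0^\varepsilon\,\D x$ and using self-adjointness,
$$\int e^{-t\mathcal L^\varepsilon}\Psi\,\D\nu_0^\varepsilon=-\langle\Psi,e^{-t\mathcal L^\varepsilon}u_0^\varepsilon\rangle=e^{-\lambda_0^\varepsilon t}\int\Psi\,\D\nu_0^\varepsilon .$$
Taking $\Psi=1$ (by approximation, since $1$ does not satisfy the Dirichlet condition, but $w$ is defined via the semigroup anyway) gives $\mathbb P_{\nu_0^\varepsilon}(t<\tau)=e^{-\lambda_0^\varepsilon t}$. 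So $\tau$ is exponential with parameter $\lambda_0^\varepsilon$, and the quasi-stationarity identity follows. Uniqueness of the QSD I would take from the general theory cited (\cite{champagnat-villemonais-23}, \cite{Louis}): the spectral gap in Theorem~\ref{gap} and the simplicity of $\lambda_0^\varepsilon$ imply that the conditioned law converges to $\nu_0^\varepsilon$ from any initial law, so any QSD must equal it.

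\textbf{Step 2: independence.} Independence of $\tau$ and $X_\tau$ follows from the Markov property combined with quasi-stationarity: on $\{t<\tau\}$, the conditional law of $X_t$ is again $\nu_0^\varepsilon$. Hence
$$\E_{\nu_0^\varepsilon}(\psi(X_\tau)1_{\tau>t})=\mathbb P(\tau>t)\,\E_{\nu_0^\varepsilon}(\psi(X_\tau)).$$
This is the standard argument and requires no analysis.

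\textbf{Step 3: exit law.} Let $h$ be the harmonic extension solving $\Delta h=0$, $h=\psi$ on $\Gamma^\varepsilon_{\mathcal D}$, $\partial_nh=0$ on $\Gamma^\varepsilon_{\mathcal N}$ (the function $\Phi$ from the proof of Proposition~\ref{prop:borne_dnu0}). Applying \eqref{ito} to $h$ up to $\tau$ gives $\E_x\psi(X_\tau)=h(x)$. Therefore $\E_{\nu_0^\varepsilon}\psi(X_\tau)=-\int u_0^\varepsilon h$. By \eqref{eq:weak_normal_derivative},
$$\langle\partial_nu_0^\varepsilon,\psi\rangle=\int\nabla u_0^\varepsilon\cdot\nabla h-\lambda_0^\varepsilon\int u_0^\varepsilon h .$$
Since $h$ is harmonic with $\partial_n h=0$ where $u_0^\varepsilon\ne0$ on the boundary, the first term vanishes, and dividing by $\lambda_0^\varepsilon$ yields \eqref{crochetbord}.

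\textbf{Main obstacle.} The difficulty is regularity. The functions $w$ and $h$ are not $\mathcal C^{1,2}$ up to the endpoints of the windows $\Gamma_k^\varepsilon$, because mixed Dirichlet–Neumann conditions create square-root singularities there, so \eqref{ito} does not apply directly. I would first apply Itô's formula on the stopped process $\tau_\delta$, the exit time from $\{x: \mathrm{dist}(x,\partial\Gamma^\varepsilon_{\mathcal D})>\delta\}\cup\ldots$, or alternatively smooth $\psi$ and mollify the boundary data. I would then pass to the limit $\delta\to0$ using boundedness of $w$ and $h$ (maximum principle) and the fact that the endpoints are hit with probability zero in dimension two. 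The same issue arises in justifying $\E\int\nabla w\,\D B=0$, which I would handle with local boundedness of gradients away from the endpoints.
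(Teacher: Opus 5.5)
Your proposal is correct and follows the same route as the paper. The paper cites \cite{Louis} and standard QSD arguments for quasi-stationarity, uniqueness, the exponential law and independence, which your Steps 1--2 spell out via the semigroup identity $\E_x(\Psi(X_t)1_{t<\tau})=(e^{-t\mathcal L^\varepsilon}\Psi)(x)$ and the Markov property; it then obtains the exit law exactly as in your Step 3, by identifying $x\mapsto\E_x(\psi(X_\tau))$ with the mixed Dirichlet--Neumann harmonic extension of $\psi$ (taking its regularity, $\mathcal C^0(\overline\Omega)$ and smooth away from $\partial\Gamma_{\mathcal D}^\varepsilon$, from \cite{Louis}), integrating against $\nu_0^\varepsilon$ and applying Green's formula.
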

\begin{proof}
The existence and uniqueness of the QSD, as well as the fact that it
is $-u_0^\varepsilon(x) \D x$ is proven in~\cite[Appendix
B]{Louis}. The fact that $\tau$ is exponentially distributed with the parameter $\lambda_0^\varepsilon$ and
independent of $X_\tau$ then follows from standard arguments on QSDs
(see e.g.~\cite{collet2013quasi}). 
It remains to identify the first exit point distribution.

Let $\psi\in  \mathcal C^{\infty}(\Gamma_{\mathcal D}^\varepsilon)$.
One can show using \eqref{ito} and regularity results for mixed
boundary problems that the function $\Psi:x\mapsto \E_x (\psi(X_\tau))$ satisfies
\begin{equation}
\label{eq:psi}
\left \{
    \begin{aligned}
        \Delta \Psi & = 0
        &&\text{ in }  \Omega\\
        \partial_n \Psi & = 0 &&\text{ on } \Gamma^\varepsilon_{\mathcal N} \\
	\Psi &=\psi &&\text{ on }  \Gamma^\varepsilon_{\mathcal D}.
    \end{aligned}
    \right . 
  \end{equation}
In particular (see again~\cite[Appendix A]{Louis}), it can be shown that
$\Psi$  is $\mathcal C^\infty(\overline{\Omega} \setminus
\partial \Gamma_{\mathcal D})$ and $\mathcal C^0(\overline{\Omega})$.
The results then follow after integrating $\Psi$ against $\nu_0^\varepsilon$, writing
$u_0^\varepsilon=\frac{-1}{\lambda_0^\varepsilon} \Delta
u_0^\varepsilon $ and using Green's formula.
\end{proof}

\section{Proofs of Lemma \ref{f0} and Corollary \ref{corf0}}\label{appf0}

\hop
\textit{Proof of Lemma \ref{f0}}.
Let us start with the function $S_k^\varepsilon$.
We have
\begin{align}\label{decompoS}
\|S_k^\varepsilon-S_k^0\|^2\leq \int_{\Omega\backslash B(x^{(k)}, \varepsilon_k^{2/3})} |S_k^\varepsilon(x)-S_k^0(x) |^2 \D x + \int_{B(x^{(k)}, \varepsilon_k^{2/3})} |S_k^\varepsilon(x)-S_k^0(x) |^2 \D x.
\end{align}
In order to estimate the first term, notice that for all $x\in \Omega\backslash B(x^{(k)}, \varepsilon_k^{2/3})$, it holds
\begin{align}
S_k^\varepsilon(x)-S_k^0(x)&=\frac{1}{\|w_k^\varepsilon\|_1}\int_{T^-_{k,\varepsilon}}^{T^+_{k,\varepsilon}} w_k^\varepsilon(t) \ln \left(\frac{|x-\psi_k(t)|}{|x-x^{(k)}|}\right) \D t \\
		&=\frac{1}{\|w_k^\varepsilon\|_1}\int_{T^-_{k,\varepsilon}}^{T^+_{k,\varepsilon}} w_k^\varepsilon(t) \ln \left(\frac{|x-x^{(k)}+x^{(k)}-\psi_k(t)|}{|x-x^{(k)}|}\right) \D t\\
		&=\frac{1}{\|w_k^\varepsilon\|_1}\int_{T^-_{k,\varepsilon}}^{T^+_{k,\varepsilon}} w_k^\varepsilon(t) \ln \left(\frac{|x-x^{(k)}|+O(\varepsilon_k)}{|x-x^{(k)}|}\right) \D t\\
		&=\frac{1}{\|w_k^\varepsilon\|_1}\int_{T^-_{k,\varepsilon}}^{T^+_{k,\varepsilon}} w_k^\varepsilon(t) \ln \left(1+O(\varepsilon_k^{1/3})\right) \D t=O(\varepsilon_k^{1/3})  \label{Sloin}
\end{align}
and the first term of \eqref{decompoS} thus satisfies
\begin{align}\label{inteloin}
\int_{\Omega\backslash B(x^{(k)}, \varepsilon_k^{2/3})} |S_k^\varepsilon(x)-S_k^0(x) |^2 \D x =O(\varepsilon_k^{2/3}).
\end{align}
For the second term, we can write using Minkowski's integral inequality
\begin{align}
&\int_{B(x^{(k)}, \varepsilon_k^{2/3})} |S_k^\varepsilon(x)-S_k^0(x)
                |^2 \D x \leq C \int_{B(x^{(k)}, \varepsilon_k^{2/3})}
                |S_k^\varepsilon(x)|^2+|S_k^0(x) |^2 \D x \\
&		\leq \frac{C}{\|w_k^\varepsilon\|_1} \int_{T^-_{k,\varepsilon}}^{T^+_{k,\varepsilon}}w_k^\varepsilon(t) \int_{B(x^{(k)}, \varepsilon_k^{2/3})} \ln \Big(|x-\psi_k(t)|\Big)^2 \D x \, \D t  +C \int_{B(x^{(k)}, \varepsilon_k^{2/3})} \ln \Big(|x-x^{(k)}|\Big)^2 \D x\\
		&\leq \frac{C}{\|w_k^\varepsilon\|_1} \int_{T^-_{k,\varepsilon}}^{T^+_{k,\varepsilon}}w_k^\varepsilon(t) \int_{B(\psi_k(t), 2\varepsilon_k^{2/3})} \ln \Big(|x-\psi_k(t)|\Big)^2 \D x \, \D t  +C \int_{B(x^{(k)}, \varepsilon_k^{2/3})} \ln \Big(|x-x^{(k)}|\Big)^2 \D x
\end{align}
Using now polar coordinates centered at $\psi_k(t)$ (resp. at $x^{(k)}$), we obtain
\begin{align}
&\int_{B(x^{(k)}, \varepsilon_k^{2/3})} |S_k^\varepsilon(x)-S_k^0(x)
                |^2 \D x\\
  &\leq \frac{C}{\|w_k^\varepsilon\|_1} \int_{T^-_{k,\varepsilon}}^{T^+_{k,\varepsilon}}w_k^\varepsilon(t) \int_0^{2\pi} \int_0^{2\varepsilon_k^{2/3}} (\ln r)^2  r\,  \D r \, \D \theta \, \D t +C \int_0^{2\pi} \int_0^{\varepsilon_k^{2/3}} (\ln r)^2  r\,  \D r \, \D \theta\\
		&\leq C \int_0^{2\varepsilon_k^{2/3}} (\ln r)^2  r\,  \D r \leq C \varepsilon_k^{2/3} \label{intepres}.
\end{align}
Combining \eqref{decompoS}, \eqref{inteloin} and \eqref{intepres} shows the result for the function $S_k^\varepsilon$.\\
Now for the function $R_k^\varepsilon$, notice that thanks to
\eqref{defR}, it is sufficient to show that for all $\sigma\in \left[\frac {T^-_{k,\varepsilon}}{\varepsilon_k}, \frac{T^+_{k,\varepsilon}}{\varepsilon_k}\right]$, we have 
\begin{align}\label{amqR-R0}
\|\tilde R_k(\cdot,\varepsilon_k \sigma)-R_k^0   \|_{W^{1,3}}=O(\varepsilon_k^{1/3}).
\end{align}
Now thanks to \eqref{eq:Rkepd2} and \eqref{defR0}, for all $\sigma\in \left[\frac{T^-_{k,\varepsilon}}{\varepsilon_k}, \frac{T^+_{k,\varepsilon}}{\varepsilon_k}\right]$, the function $\tilde R_k(\cdot,\varepsilon_k \sigma)-R_k^0$ satisfies
\begin{equation}
\left \{
    \begin{aligned}
        \Delta_x \left( \tilde R_k(x,\varepsilon_k \sigma) -R_k^0(x) \right) & =0
        &&
x\in \Omega, \\
        \partial_{n_x} \left( \tilde R_k(x,\varepsilon_k \sigma) -R_k^0(x) \right) & = N_k(x,0) -N_k(x,\varepsilon_k \sigma) &&
  x\in \partial \Omega.
    \end{aligned}
    \right . 
\end{equation}
Thus, since $\tilde R_k(x^{(k)},\varepsilon_k \sigma)
-R_k^0(x^{(k)})=0$ (see Proposition~\ref{solR}), according to \cite[Proposition 2.3]{Aramaki}, it is sufficient to prove that for all $\sigma\in \left[\frac{T^-_{k,\varepsilon}}{\varepsilon_k}, \frac{T^+_{k,\varepsilon}}{\varepsilon_k}\right]$,
\begin{align}\label{amqN-N0}
N_k(\cdot,0) -N_k(\cdot,\varepsilon_k \sigma)=O(\varepsilon_k^{1/3}) \qquad \text{in }L^\infty(\partial \Omega)
\end{align}
to obtain \eqref{amqR-R0}.
Let us then establish \eqref{amqN-N0}.
For $x\in\partial \Omega \backslash B(x^{(k)}, \varepsilon_k^{1/3})$, it holds
\begin{align}
N_k(x,\varepsilon_k \sigma)&=\frac{n_x\cdot \left(x-x^{(k)}+x^{(k)}-\psi_k(\varepsilon_k \sigma)\right)}{\left|x-x^{(k)}+x^{(k)}-\psi_k(\varepsilon_k \sigma)\right|^2}=\frac{n_x\cdot \left(x-x^{(k)}\right)+O(\varepsilon_k)}{\left|x-x^{(k)}\right|^2+O\left( \left|x-x^{(k)}\right| \varepsilon_k  \right)}\\
		&=\frac{n_x\cdot \left(x-x^{(k)}\right)+O(\varepsilon_k)}{\left|x-x^{(k)}\right|^2} \left( 1 +O\left( \frac{\varepsilon_k}{|x-x^{(k)}|}  \right) \right)=\left(N_k(x,0)+O\left( \varepsilon_k^{1/3}  \right) \right) \left( 1 +O\left( \varepsilon_k^{2/3}  \right) \right)\\
		&=N_k(x,0)+O\left( \varepsilon_k^{1/3}  \right).
\end{align}
On the other hand, for $x\in \partial \Omega \cap B(x^{(k)}, \varepsilon_k^{1/3})$, putting $x= \psi_k(\varepsilon_k^{1/3}\theta)$ with $\theta \in \left[\frac{T^-_{k,\varepsilon}}{\varepsilon_k^{1/3}}, \frac{T^+_{k,\varepsilon}}{\varepsilon_k^{1/3}}\right]$, we have for all $\sigma\in \left[\frac{T^-_{k,\varepsilon}}{\varepsilon_k}, \frac{T^+_{k,\varepsilon}}{\varepsilon_k}\right]$,
$$x-\psi_k(\varepsilon_k\sigma)=\psi_k'(\varepsilon_k^{1/3}\theta)\left( \varepsilon_k^{1/3}\theta -\varepsilon_k\sigma \right)+\frac12 \psi_k''(\varepsilon_k^{1/3}\theta)\left( \varepsilon_k^{1/3}\theta -\varepsilon_k\sigma \right)^2+O\left( \left( \varepsilon_k^{1/3}\theta -\varepsilon_k\sigma \right)^3 \right).$$
Since moreover $n_x \perp \psi_k'(\varepsilon_k^{1/3}\theta)$, this gives
\begin{align}
N_k(x,\varepsilon_k \sigma)&=\frac{n_x \cdot \psi_k''(\varepsilon_k^{1/3}\theta)\varepsilon_k^{2/3} \left( \theta -\varepsilon_k^{2/3}\sigma \right)^2+O\left(  \varepsilon_k \left( \theta -\varepsilon_k^{2/3}\sigma \right)^3 \right)}{2|\psi_k'(\varepsilon_k^{1/3}\theta)|^2 \varepsilon_k^{2/3} \left( \theta -\varepsilon_k^{2/3}\sigma \right)^2+O\left(  \varepsilon_k \left( \theta -\varepsilon_k^{2/3}\sigma \right)^3 \right)}\\
		&=\frac{n_x \cdot \psi_k''(\varepsilon_k^{1/3}\theta)+O\left(  \varepsilon_k^{1/3} \left( \theta -\varepsilon_k^{2/3}\sigma \right) \right)}{2|\psi_k'(\varepsilon_k^{1/3}\theta)|^2 +O\left(  \varepsilon_k^{1/3} \left( \theta -\varepsilon_k^{2/3}\sigma \right) \right)}\\
		&=\frac{n_x \cdot \psi_k''(\varepsilon_k^{1/3}\theta)}{2|\psi_k'(\varepsilon_k^{1/3}\theta)|^2 } +O\left(  \varepsilon_k^{1/3} \right) \label{Ntheta}
\end{align}
and therefore
$$N_k(x,\varepsilon_k \sigma)-N_k(x,0)=O\left(  \varepsilon_k^{1/3} \right)$$
as the leading term of \eqref{Ntheta} does not depend on $\sigma$.
This shows \eqref{amqN-N0} and thus concludes the proof of Lemma \ref{f0}. \hfill $\Box$

\hop
\textit{Proof of Corollary \ref{corf0}}.
Let us start with the statement on $S_k^\varepsilon$.
When $k=j$, it is directly given by Lemma \ref{Sbord}.
When $k\neq j$, one can simply proceed as in \eqref{Sloin} : for all $x\in \Gamma_j^\varepsilon$, we have
\begin{align}
S_k^\varepsilon(x)-S_k^0(x^{(j)})&=\frac{1}{\|w_k^\varepsilon\|_1}\int_{T^-_{k,\varepsilon}}^{T^+_{k,\varepsilon}} w_k^\varepsilon(t) \ln \left(\frac{|x-\psi_k(t)|}{|x^{(k)}-x^{(j)}|}\right) \D t \\
		&=\frac{1}{\|w_k^\varepsilon\|_1}\int_{T^-_{k,\varepsilon}}^{T^+_{k,\varepsilon}} w_k^\varepsilon(t) \ln \left(\frac{|x-x^{(j)}+x^{(j)}-x^{(k)}+x^{(k)}-\psi_k(t)|}{|x^{(k)}-x^{(j)}|}\right) \D t\\
		&=\frac{1}{\|w_k^\varepsilon\|_1}\int_{T^-_{k,\varepsilon}}^{T^+_{k,\varepsilon}} w_k^\varepsilon(t) \ln \left(\frac{|x^{(j)}-x^{(k)}|+O(|\varepsilon|)}{|x^{(k)}-x^{(j)}|}\right) \D t\\
		&=\frac{1}{\|w_k^\varepsilon\|_1}\int_{T^-_{k,\varepsilon}}^{T^+_{k,\varepsilon}} w_k^\varepsilon(t) \ln \left(1+O(|\varepsilon|)\right) \D t=O(|\varepsilon|).
\end{align}
For the estimate on $R_k^\varepsilon$, we have by Lemma \ref{f0} and
thanks to the Sobolev injection $W^{1,3}(\Omega) \subset \mathcal C^{1/3}(\overline{\Omega})$ that
$$\|R_k^\varepsilon-R_k^0\|_{\mathcal C^{1/3}(\overline\Omega)}=O\left( \varepsilon_k^{1/3} \right).$$
Thus, it holds for all $x\in \Gamma_j^\varepsilon$
$$R_k^\varepsilon(x)-R_k^0(x^{(j)})=R_k^\varepsilon(x)- R_k^\varepsilon(x^{(j)}) +R_k^\varepsilon(x^{(j)}) -R_k^0(x^{(j)})=O\left( \varepsilon_j^{1/3}\right)+O\left( \varepsilon_k^{1/3}\right)$$
thanks also to Corollary \ref{Rhold}, and the result follows since $R_k^0(x^{(k)})=0$.
\hfill $\Box$

\section{Study of $D_M^\varepsilon$}

\begin{lem}\label{Dinv}
Recall the matrix $D_M^\varepsilon$ introduced in \eqref{DM} as well as the notation \eqref{dtilde} and let $k_0\in \llbracket 1,N\rrbracket$.
It holds 
$$\left(  \frac{\pi}{|\Omega|}\overline{K^\varepsilon} \right)^M- \frac{\pi}{|\Omega|}  \mathop{\sum_{\gamma \in \N^{N};\gamma_{k_0}=1}}_{\substack{1\leq |\gamma|\leq M}} (K^\varepsilon)^{\gamma} \left(\frac{\pi}{|\Omega|}\overline{K^\varepsilon}\right)^{M-|\gamma|}  
 \tilde d^{(k_0)}
(\gamma-e_{k_0}) =  \mathop{\sum_{\gamma \in \N^{N};\gamma_{k_0}=0}}_{\substack{ |\gamma|= M}}
 \tilde d^{(k_0)}(\gamma) (K^\varepsilon)^{\gamma}
  $$
In particular, all the diagonal entries of $D_M^\varepsilon$ are
positive (see~\eqref{dtildepos}).
\end{lem}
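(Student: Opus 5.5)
The plan is to compute $\tilde d^{(k_0)}$ in closed form and then reduce both sides to polynomials in $\overline{K^\varepsilon}$ and $K^\varepsilon_{k_0}$. Write $a=\pi/|\Omega|$ throughout.

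\textbf{Step 1: recursion for $\tilde d^{(k_0)}$.} Fix $\beta$ with $\beta_{k_0}=0$ and $|\beta|\ge 1$, and pick any $k\neq k_0$ (possible since $N\ge 2$). Apply \eqref{dn} with $n=0$ to $\gamma=\beta+e_k$, as in the proof of Lemma~\ref{recd0}. The only indices that contribute are those with $|\gamma-\alpha|=1$, and for these $\hat a^{(k_0)}_{\gamma-\alpha}=a$ by Lemma~\ref{lemcomp}. The proof of Lemma~\ref{recd0} then gives
$$\tilde d^{(k_0)}(\beta)=a\sum_{\alpha\le\beta,\ |\alpha|=|\beta|-1}\tilde d^{(k_0)}(\alpha)=a\sum_{j\in\mathrm{supp}\,\beta}\tilde d^{(k_0)}(\beta-e_j).$$
The base case is $\tilde d^{(k_0)}(0)=d^{(k_0)}_{0,k}(e_k)=1$, by \eqref{cdl0}.

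\textbf{Step 2: closed form.} By induction on $|\beta|$, using Pascal's rule for multinomial coefficients, I expect
$$\tilde d^{(k_0)}(\beta)=a^{|\beta|}\binom{|\beta|}{\beta}.$$
By the multinomial theorem this yields, for every $s\ge 0$,
$$\sum_{\beta_{k_0}=0,\ |\beta|=s}\tilde d^{(k_0)}(\beta)(K^\varepsilon)^\beta=a^s\bigl(\overline{K^\varepsilon}-K^\varepsilon_{k_0}\bigr)^s.$$
In particular, the right-hand side of the lemma equals $a^M(\overline{K^\varepsilon}-K^\varepsilon_{k_0})^M$.

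\textbf{Step 3: the left-hand side.} In the sum, substitute $\gamma=\beta+e_{k_0}$ with $\beta_{k_0}=0$ and $s=|\beta|\in\llbracket 0,M-1\rrbracket$. Using Step~2, the sum becomes
$$a\sum_{s=0}^{M-1}K^\varepsilon_{k_0}\,(a\overline{K^\varepsilon})^{M-1-s}\,a^s(\overline{K^\varepsilon}-K^\varepsilon_{k_0})^s=a^M K^\varepsilon_{k_0}\sum_{s=0}^{M-1}\overline{K^\varepsilon}^{\,M-1-s}(\overline{K^\varepsilon}-K^\varepsilon_{k_0})^s.$$
Since $K^\varepsilon_{k_0}=\overline{K^\varepsilon}-(\overline{K^\varepsilon}-K^\varepsilon_{k_0})$, the geometric identity $x^M-y^M=(x-y)\sum_{s=0}^{M-1}x^{M-1-s}y^s$ shows this equals $a^M\bigl(\overline{K^\varepsilon}^M-(\overline{K^\varepsilon}-K^\varepsilon_{k_0})^M\bigr)$. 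Subtracting it from $(a\overline{K^\varepsilon})^M$ gives exactly the right-hand side computed in Step~2.

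\textbf{Positivity.} By \eqref{dtildepos} every $\tilde d^{(k_0)}(\gamma)$ is positive and $K^\varepsilon$ has positive entries, so the right-hand side is positive. Multiplying by $a\overline{K^\varepsilon}>0$ shows that the diagonal entries of $D_M^\varepsilon$ are positive.

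\textbf{Main obstacle.} The delicate point is Step~1. One must check carefully that the index bookkeeping of \eqref{dn} at $n=0$ restricts the sum to $|\beta|=|\gamma|-1$, which forces $|\gamma-\beta|=1$, and that the change of variables $\alpha=\beta-e_k$ in the proof of Lemma~\ref{recd0} really produces the stated sum over $\alpha\le\gamma$. Once the recursion is in place, Steps~2 and~3 are a routine multinomial and telescoping computation.
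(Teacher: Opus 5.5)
Your proof is correct. The bookkeeping you flag in Step 1 checks out: $\alpha=\beta-e_k$ maps $\{\beta\le\gamma+e_k,\ \beta_k\ge 1,\ |\beta|=|\gamma|\}$ bijectively onto $\{\alpha\le\gamma,\ |\alpha|=|\gamma|-1\}$. Moreover, $\gamma-\alpha=e_j$ with $j\neq k_0$, so $\hat a^{(k_0)}_{\gamma-\alpha}=a$.

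Your route differs from the paper's in presentation. The paper argues by induction on $M$. It rewrites the left-hand side at order $M+1$ using the induction hypothesis and notes a cancellation after the shift $\gamma\mapsto\gamma+e_{k_0}$. This reduces everything to \eqref{amqappB}, which in your notation is $P_{M+1}=a(\overline{K^\varepsilon}-K^\varepsilon_{k_0})P_M$ for $P_s=\sum_{\gamma_{k_0}=0,|\gamma|=s}\tilde d^{(k_0)}(\gamma)(K^\varepsilon)^\gamma$. The paper verifies that identity directly from the recursion \eqref{dn}, the same recursion you extract.

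You instead solve the recursion explicitly, $\tilde d^{(k_0)}(\beta)=a^{|\beta|}\binom{|\beta|}{\beta}$. Then $P_s=a^s(\overline{K^\varepsilon}-K^\varepsilon_{k_0})^s$ follows from the multinomial theorem, and the left-hand side collapses by the geometric-sum identity, with no induction on $M$.

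The mechanism is the same in both. The paper's \eqref{amqappB} is the generating-function form of your Pascal rule, and its cancellation step is your telescoping. What your version buys is explicitness. You get closed forms for $\tilde d^{(k_0)}$ and for the diagonal entries of $D_M^\varepsilon$, namely $a^{M+1}\overline{K^\varepsilon}(\overline{K^\varepsilon}-K^\varepsilon_{k_0})^M$. This makes positivity immediate without appealing to \eqref{dtildepos}, and it would also make $p_M$ in Lemma~\ref{calcdet} and the lower bound in Lemma~\ref{minodet} fully explicit. The paper's induction only needs the recursion and never has to identify the coefficients.
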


\begin{proof}
Let us proceed by induction on $M\geq 1$.

When $M=1$, the statement is simply thanks to \eqref{cdl0} 
$$\frac{\pi}{|\Omega|}\overline{K^\varepsilon}-\frac{\pi}{|\Omega|}K^\varepsilon_{k_0}=  \frac{\pi}{|\Omega|}\sum_{k\neq k_0} K_k^\varepsilon$$
and clearly holds true.

Suppose now that the statement holds true for some $M\geq 1$ and let us compute
\begin{align}
&\left(  \frac{\pi}{|\Omega|}\overline{K^\varepsilon} \right)^{M+1}- \frac{\pi}{|\Omega|}  \mathop{\sum_{\gamma \in \N^{N};\gamma_{k_0}=1}}_{\substack{1\leq |\gamma|\leq M+1}} (K^\varepsilon)^{\gamma} \left(\frac{\pi}{|\Omega|}\overline{K^\varepsilon}\right)^{M+1-|\gamma|}  
 \tilde d^{(k_0)}
(\gamma-e_{k_0})  \\
		& =  \frac{\pi}{|\Omega|}\overline{K^\varepsilon}   \left[  \left(  \frac{\pi}{|\Omega|}\overline{K^\varepsilon} \right)^{M}- \frac{\pi}{|\Omega|}  \mathop{\sum_{\gamma \in \N^{N};\gamma_{k_0}=1}}_{\substack{1\leq |\gamma|\leq M}} (K^\varepsilon)^{\gamma} \left(\frac{\pi}{|\Omega|}\overline{K^\varepsilon}\right)^{M-|\gamma|}  
 \tilde d^{(k_0)}
(\gamma-e_{k_0}) \right]\\
		& \quad  -  \frac{\pi}{|\Omega|}  \mathop{\sum_{\gamma \in \N^{N};\gamma_{k_0}=1}}_{\substack{ |\gamma|= M+1}} (K^\varepsilon)^{\gamma} 
 \tilde d^{(k_0)}
(\gamma-e_{k_0}) \\
		& = \frac{\pi}{|\Omega|}\overline{K^\varepsilon}     \mathop{\sum_{\gamma \in \N^{N};\gamma_{k_0}=0}}_{\substack{ |\gamma|= M}}
 \tilde d^{(k_0)}(\gamma) (K^\varepsilon)^{\gamma}     
-  \frac{\pi}{|\Omega|}  \mathop{\sum_{\gamma \in \N^{N};\gamma_{k_0}=1}}_{\substack{ |\gamma|= M+1}} (K^\varepsilon)^{\gamma} 
 \tilde d^{(k_0)}
(\gamma-e_{k_0})  \label{hr}\\
			&  =\frac{\pi}{|\Omega|}  \left( \overline{K^\varepsilon} -K^\varepsilon_{k_0}   \right)    \mathop{\sum_{\gamma \in \N^{N};\gamma_{k_0}=0}}_{\substack{ |\gamma|= M}}
 \tilde d^{(k_0)}(\gamma) (K^\varepsilon)^{\gamma}     
+  \frac{\pi}{|\Omega|} \mathop{\sum_{\gamma \in \N^{N};\gamma_{k_0}=0}}_{\substack{ |\gamma|= M}}
 \tilde d^{(k_0)}(\gamma) (K^\varepsilon)^{\gamma+e_{k_0}} \label{2dern}   \\
		& \quad
-  \frac{\pi}{|\Omega|}  \mathop{\sum_{\gamma \in \N^{N};\gamma_{k_0}=1}}_{\substack{ |\gamma|= M+1}} (K^\varepsilon)^{\gamma}  
 \tilde d^{(k_0)}
(\gamma-e_{k_0}) 
\end{align}
where we used the induction hypothesis to establish \eqref{hr}.
The 
change of index $\beta=\gamma+e_ {k_0}$ shows that the last two terms of \eqref{2dern} actually cancel one another.
It thus remains to prove that
\begin{align}\label{amqappB}
\frac{\pi}{|\Omega|}  \left( \overline{K^\varepsilon} -K^\varepsilon_{k_0}   \right)    \mathop{\sum_{\gamma \in \N^{N};\gamma_{k_0}=0}}_{\substack{ |\gamma|= M}}
 \tilde d^{(k_0)}(\gamma) (K^\varepsilon)^{\gamma} = \mathop{\sum_{\gamma \in \N^{N};\gamma_{k_0}=0}}_{\substack{ |\gamma|= M+1}}
 \tilde d^{(k_0)}(\gamma) (K^\varepsilon)^{\gamma}     .
\end{align}
By the change of index $\beta=\gamma+e_ {k}$, we have
\begin{align}
&\frac{\pi}{|\Omega|}  \left( \overline{K^\varepsilon} -K^\varepsilon_{k_0}   \right)    \mathop{\sum_{\gamma \in \N^{N};\gamma_{k_0}=0}}_{\substack{ |\gamma|= M}}
 \tilde d^{(k_0)}(\gamma) (K^\varepsilon)^{\gamma} = \frac{\pi}{|\Omega|}  \sum_{k\neq k_0}  \mathop{\sum_{\gamma \in \N^{N};\gamma_{k_0}=0}}_{\substack{ |\gamma|= M}}
 \tilde d^{(k_0)}(\gamma) (K^\varepsilon)^{\gamma+e_k} \\
		&=\frac{\pi}{|\Omega|}  \sum_{k\neq k_0}  \mathop{\sum_{\beta \in \N^{N};\beta_{k_0}=0; \beta_{k}\neq 0 }}_{\substack{ |\beta|= M+1}}
 \tilde d^{(k_0)}(\beta-e_k) (K^\varepsilon)^{\beta} =\frac{\pi}{|\Omega|}  \mathop{\sum_{\beta \in \N^{N};\beta_{k_0}=0 }}_{\substack{ |\beta|= M+1}}  \sum_{k\in \mathrm{supp}\, \beta}  
 \tilde d^{(k_0)}(\beta-e_k) (K^\varepsilon)^{\beta} \\
			&= \mathop{\sum_{\beta \in \N^{N};\beta_{k_0}=0 }}_{\substack{ |\beta|= M+1}} \Bigg(  \mathop{\sum_{\gamma \in \N^{N};\gamma\leq \beta }}_{\substack{ |\gamma|= |\beta|-1}}  \frac{\pi}{|\Omega|} 
 \tilde d^{(k_0)}(\gamma) \Bigg) (K^\varepsilon)^{\beta}.
\end{align}
The coefficient between the parenthesis appears to be exactly $\tilde d^{(k_0)}(\beta)$ thanks to \eqref{dn}.
This shows that \eqref{amqappB} holds true and therefore completes the
proof.
\end{proof}

\section{Multivariate polynomials}

\begin{lem}\label{multipoly}
Let $P\in \mathscr P^{hom}_n$ in $N$ variables.
Suppose that there exists $\mathcal O\subset \R^N$ a neighborhood of $0$ such that for all $X\in \mathcal O$ satisfying $X_1>0$, ..., $X_N>0$, it holds 
$$P(X)=O(|X|^{n+1}).$$
Then $P=0$.
\end{lem}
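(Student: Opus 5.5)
Assume $P\neq 0$; we derive a contradiction by restricting $P$ to rays inside the open positive orthant.

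Fix $Y\in\R^N$ with all coordinates $Y_1,\dots,Y_N>0$. For every $t>0$ small enough, the point $tY$ lies in $\mathcal O$ and has positive coordinates, so the hypothesis applies to it. Homogeneity of degree $n$ gives $P(tY)=t^nP(Y)$. The hypothesis then yields a constant $C_Y$ with
\begin{align}
t^n|P(Y)|\le C_Y\, t^{n+1}|Y|^{n+1}
\end{align}
for all small $t>0$. Dividing by $t^n$ and letting $t\to0^+$ gives $P(Y)=0$. Hence $P$ vanishes on the whole open positive orthant $(\R_+^*)^N$.

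It remains to check that a polynomial vanishing on a nonempty open subset of $\R^N$ is the zero polynomial. I would argue this in either of two ways.
\begin{enumerate}[label=\textbullet]
\item \emph{Via derivatives.} All partial derivatives of $P$ vanish at an interior point $Y_0$ of the orthant. Taylor's formula at $Y_0$, which is exact for polynomials, then shows $P\equiv0$ as a polynomial.
\item \emph{Via induction on $N$.} Write $P(X)=\sum_j p_j(X_1,\dots,X_{N-1})X_N^j$. For fixed positive $(X_1,\dots,X_{N-1})$, the one-variable polynomial $X_N\mapsto P(X)$ has infinitely many roots, so every $p_j$ vanishes on the positive orthant of $\R^{N-1}$. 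The induction hypothesis gives $p_j=0$ for all $j$.
\end{enumerate}

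There is no real obstacle here. The only point needing care is that $O(\cdot)$ in the hypothesis is uniform near $0$ within $\mathcal O$, so the bound holds along each fixed ray, which is all the argument uses.
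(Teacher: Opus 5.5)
Your proof is correct and follows the paper's argument. You restrict to rays $tY$ in the open positive orthant, use homogeneity to get $P(Y)=0$ there, and conclude that $P$ is the zero polynomial; you also justify that last step (a polynomial vanishing on a nonempty open set is zero) more explicitly than the paper, which only notes that $(p_\gamma)$ is orthogonal to every $(X^\gamma)_{|\gamma|=n}$ with $X$ positive and states the conclusion directly.
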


\begin{proof}
There exists some real coefficients $(p_\gamma)_{|\gamma|=n}$ such that
$$P(X)=\sum_{|\gamma|=n}p_\gamma X^\gamma.$$
Let $X\in \R^N$ such that $X_1>0$, ..., $X_N>0$ and $|X|=1$.
For $t>0$ small enough, we also have $tX\in \mathcal O$.
Thus,
$$t^n\sum_{|\gamma|=n}p_\gamma X^\gamma=O(t^{n+1}).$$
Dividing both sides by $t^n$ and taking the limit $t\to 0$ shows that
$$\sum_{|\gamma|=n}p_\gamma X^\gamma=0.$$
Therefore,
$$(p_\gamma)_{|\gamma|=n}\perp (X^\gamma)_{|\gamma|=n}\qquad \forall X_1>0, ..., X_N>0 \text{ with }|X|=1$$
and consequently $(p_\gamma)_{|\gamma|=n}=0$.
This completes the proof.
\end{proof}

\subsection*{Acknowledgements} 
We would like to thank Lo\"is Delande, Boris Nectoux and Gabriel
Stoltz for fruitful discussions.
This work was partially funded by the Agence Nationale de la Recherche, under grant ANR-19-CE40-0010-01 (QuAMProcs), and by the European Research Council (ERC) under the European Union's Horizon 2020 research and innovation programme (project EMC2, grant agreement No 810367).

\bibliography{NarrowEscape}

\end{document}